\newcommand{\nc}{\newcommand}
\newcommand{\rnc}{\renewcommand}
\nc{\tp}[2][baseline=-4pt,line width=.7pt,scale=75/100]{\trimbox{0pt -2pt 0pt -2pt}{\begin{tikzpicture}[{#1}] {#2} \end{tikzpicture}}}
\numberwithin{equation}{section}
\newtheorem{thrm}{Theorem}[section]
\newtheorem{prop}[thrm]{Proposition}
\newtheorem{crl}[thrm]{Corollary}
\newtheorem{lemma}[thrm]{Lemma}
\newtheorem{conj}[thrm]{Conjecture}
\theoremstyle{definition}
\newtheorem{defn}[thrm]{Definition}
\newtheorem{exam}[thrm]{Example}
\theoremstyle{remark}
\newtheorem{rmk}[thrm]{Remark}
\nc{\rmkend}{\ensuremath{\diameter}}
\nc{\examend}{\ensuremath{\diameter}}
\nc{\defnend}{\ensuremath{\diameter}}
\setlist[itemize,1]{leftmargin=.4in}
\setlist[enumerate,1]{leftmargin=.4in,label=(\roman*)}
\setlist[description,1]{leftmargin=.4in,font=\normalfont\itshape}
\nc{\al}{\alpha}
\nc{\be}{\beta}
\nc{\ga}{\gamma}
\nc{\del}{\delta}
\nc{\eps}{\epsilon}
\nc{\veps}{\varepsilon}
\nc{\ze}{\zeta}
\nc{\ka}{\kappa}
\nc{\la}{\lambda}
\nc{\si}{\sigma}
\nc{\ups}{\upsilon}
\nc{\vphi}{\varphi}
\nc{\om}{\omega}
\nc{\Ga}{\Gamma}
\nc{\La}{\Lambda}
\nc{\Si}{\Sigma}
\nc{\Ups}{\Upsilon}
\nc{\ad}{{\sf ad}}
\nc{\cork}{{\sf corank}}
\rnc{\d}{{\sf d}}
\rnc{\exp}{{\sf exp}}
\nc{\id}{{\sf id}}
\nc{\im}{{\sf im}}
\rnc{\mod}{{\sf mod}}
\nc{\oi}{{\sf oi}}
\nc{\rk}{{\sf rank}}
\nc{\Ad}{{\sf Ad}}
\nc{\Aut}{{\sf Aut}}
\nc{\Br}{{\sf Br}}
\nc{\End}{{\sf End}}
\nc{\GL}{{\sf GL}}
\nc{\Hom}{{\sf Hom}}
\nc{\Inn}{{\sf Inn}}
\nc{\Ker}{{\sf Ker}}
\nc{\Out}{{\sf Out}}
\nc{\Sp}{{\sf Sp}}
\nc{\mfsl}{\mathfrak{s}\mathfrak{l}}
\nc{\mc}{\mathcal}
\nc{\mf}{\mathfrak}
\nc{\ms}{\mathsf}
\nc{\ol}{\overline}
\nc{\wh}{\widehat}
\nc{\wt}{\widetilde}
\nc{\ot}{\otimes}
\nc{\op}{\oplus}
\nc{\mfb}{\mathfrak{b}}
\nc{\mfc}{\mathfrak{c}}
\nc{\mfg}{\mathfrak{g}}
\nc{\mfh}{\mathfrak{h}}
\nc{\mfk}{\mathfrak{k}}
\nc{\mfp}{\mathfrak{p}}
\nc{\mfn}{\mathfrak{n}}
\nc{\mft}{\mathfrak{t}}
\nc{\N}{\mathbb{N}}
\nc{\Z}{\mathbb{Z}}
\nc{\Q}{\mathbb{Q}}
\nc{\R}{\mathbb{R}}
\nc{\C}{\mathbb{C}}
\nc{\F}{\mathbb{F}}
\nc{\eq}[1]{\begin{equation}#1\end{equation}}
\nc{\eqrefs}[2]{\text{(\ref{#1}-\ref{#2})}}
\nc{\qu}{\quad}
\nc{\qq}{\qquad}
\nc*\rel@kern[1]{\kern#1\dimexpr\macc@kerna}
\nc*\widebar[1]{%
  \begingroup
  \def\mathaccent##1##2{%
    \rel@kern{0.8}%
    \overline{\rel@kern{-0.8}\macc@nucleus\rel@kern{0.2}}%
    \rel@kern{-0.2}%
  }%
  \macc@depth\@ne
  \let\math@bgroup\@empty \let\math@egroup\macc@set@skewchar
  \mathsurround\z@ \frozen@everymath{\mathgroup\macc@group\relax}%
  \macc@set@skewchar\relax
  \let\mathaccentV\macc@nested@a
  \macc@nested@a\relax111{#1}%
  \endgroup
}
\def\nobreakhline{%
  \noalign{\ifnum0=`}\fi
    \penalty\@M
    \futurelet\@let@token\varLambda_{q,}@@nobreakhline}
\def\LT@@nobreakhline{%
  \ifx\@let@token\hline
    \global\let\@gtempa\@gobble
    \gdef\LT@sep{\penalty\@M\vskip\doublerulesep}
  \else
    \global\let\@gtempa\@empty
    \gdef\LT@sep{\penalty\@M\vskip-\arrayrulewidth}
  \fi
  \ifnum0=`{\fi}%
  \multispan\LT@cols
     \unskip\leaders\hrule\@height\arrayrulewidth\hfill\cr
  \noalign{\LT@sep}%
  \multispan\LT@cols
     \unskip\leaders\hrule\@height\arrayrulewidth\hfill\cr
  \noalign{\penalty\@M}%
  \@gtempa}
\def\@cline#1-#2\@nil{%
  \omit
  \@multicnt#1%
  \advance\@multispan\m@ne
  \ifnum\@multicnt=\@ne\@firstofone{&\omit}\fi
  \@multicnt#2%
  \advance\@multicnt-#1%
  \advance\@multispan\@ne
  \leaders\hrule\@height\arrayrulewidth\hfill
  \cr
  \noalign{\nobreak\vskip-\arrayrulewidth}}
\nc{\red}{\color{red}}
\nc{\blu}{\color{blue}}
\nc{\br}{\color{brown}}
\nc{\grn}{\color{green!55!black}}
\nc{\gry}{\color{gray}}
\rnc\appendixname{}
\begin{document}

\title[Pseudo-symmetric pairs for Kac-Moody algebras]
{Pseudo-symmetric pairs for Kac-Moody algebras}

\author{Vidas Regelskis}
\address{Department of Physics, Astronomy and Mathematics, University of Hertfordshire, Hatfield AL10 9AB, UK, and
Institute of Theoretical Physics and Astronomy, Vilnius University, Saul\.etekio av.~3, Vilnius 10257, Lithuania.}
\email{vidas.regelskis@gmail.com}

\author{Bart Vlaar}
\address{Department of Mathematics, Heriot-Watt University, Edinburgh, EH14 4AS, UK, and Max Planck Institute for Mathematics, Vivatsgasse 7, 53111 Bonn, Germany}
\email{B.Vlaar@hw.ac.uk}

\subjclass[2020]{Primary: 17B22, 17B40, 17B67; Secondary: 16B30, 17B37, 20F55}
\keywords{Kac-Moody algebras, automorphism group, symmetric pairs, restricted Weyl group}
	
\maketitle

\begin{center}
\emph{To Jasper Stokman on the occasion of his 50th birthday}
\end{center}

\begin{abstract}
Lie algebra involutions and their fixed-point subalgebras give rise to symmetric spaces and real forms of complex Lie algebras, and are well-studied in the context of symmetrizable Kac-Moody algebras. 
In this paper we propose a generalization.
Namely, we introduce the concept of a pseudo-involution, an automorphism which is only required to act involutively on a stable Cartan subalgebra, and the concept of a pseudo-fixed-point subalgebra, a natural substitute for the fixed-point subalgebra.
In the symmetrizable Kac-Moody setting, we give a comprehensive discussion of pseudo-involutions of the second kind, the associated pseudo-fixed-point subalgebras, restricted root systems and Weyl groups, in terms of generalizations of Satake diagrams.
\end{abstract}


\setcounter{tocdepth}{1} 
\tableofcontents
\setcounter{tocdepth}{2}


\section{Introduction} 
 
Given a set, possibly with additional structure, its involutive automorphisms and corresponding fixed-point subsets are natural objects of interest.
In the case of a Lie algebra\footnote{We always work over an algebraically closed field of characteristic 0. Lie algebra automorphisms are always understood to fix this field pointwise.} $\mfg$, an involutive automorphism $\theta: \mfg \to \mfg$ and its fixed-point subalgebra $\mfg^\theta$ give rise to the notion of a symmetric pair $(\mfg,\mfg^\theta)$ and a linear decomposition of $\mfg$ in terms of $\pm 1$-eigenspaces: $\mfg = \mfg^\theta \oplus \mfg^{-\theta}$.
These have been extensively studied for semisimple finite-dimensional Lie algebras, see e.g.\ \cite{Sa60,Ar62,He12}, for instance as part of the investigation of the associated symmetric space $G/G^\theta$ or the real form of $\mfg$ defined in terms of the semi-involution associated to $\theta$; key tools for this study are the restricted root system and the restricted Weyl group associated to $\theta$.
In these works the involutions are described combinatorially in terms of Satake diagrams, which are particular decorations of the Dynkin diagram associated to $\mfg$ (there are also other descriptions in terms of so-called Kac or Vogan diagrams, see e.g.\ \cite{Ka69,OV94,CH04,CZ17}).

Kac-Moody algebras, which we always assume to be symmetrizable and indecomposable, generalize simple finite-dimensional Lie algebras as well as their (suitably extended) loop algebras \cite{Ka68,Mo68,Ka74,PK83,Ka90}.
Involutive automorphisms for Kac-Moody algebras were studied for the affine case in \cite{Ba86,Le88} and more generally in \cite[Ch.~5]{KW92}.
In the infinite-dimensional case, a crucial distinction exists between those automorphisms $\theta$ which preserve the two conjugacy classes of Borel subalgebras and those which swap them; they are said to be \emph{of the first} and \emph{of the second kind} and the corresponding real forms are called \emph{almost compact} and \emph{almost split}, respectively.
Involutions of the second kind, which are naturally described by Kac-Moody versions of Satake diagrams, were studied in \cite{Be89} and, with a particular focus on the associated real forms, in \cite{BBBR95,BR06}. 
They were revisited in the context of q-deformations of their fixed-point subalgebras in \cite[Sec.\ 2]{Ko14}.
For involutions of the first kind, see e.g. \cite{Na92,BR03,BR07}.

Such classifications of involutions rely on detailed knowledge of the structure of the automorphism group in general, see e.g.\ \cite{PK83,Ba86,KW92,Gu10,CZ17}.
A natural generalization of the study of involutive automorphism and their fixed-point subalgebras is obtained when we replace ``involutive'' by ``finite-order'', see e.g.\ \cite{Ka69,JZh01,HG09}. 

\subsection{Pseudo-involutions and pseudo-fixed-point subalgebras} 

In this paper we pursue a rather different direction.
Namely, we consider the following two questions for an arbitrary Lie algebra $\mfg$, defined over an algebraically closed field $\F$ of characteristic 0, and a distinguished subalgebra $\mft \subseteq \mfg$:

\begin{enumerate}
\item 
Are there (non-involutive) automorphisms $\theta: \mfg \to \mfg$ which stabilize $\mft$ such that the restriction $\theta|_\mft$ is an involution?
Can we classify them? \smallskip
\item
What is a reasonable substitute $\mfk \subseteq \mfg$ for the fixed-point subalgebra $\mfg^\theta$? 
Since $\theta|_\mft$ is still an involution, it is natural to require that $\mfk \cap \mft = \mft^\theta$.
\end{enumerate}
Furthermore, it is natural to replace $\mft$, $\theta$ and $\mfk$ in this setup by their $\Aut(\mfg)$-conjugacy classes.\smallskip

To our best knowledge this problem has not been studied in great detail.
Of course, whether this is interesting depends on the choice of $\mft$; in particular, $\mft$ should not be too small.
If $\mft$ is a Cartan subalgebra%
\footnote{
In this paper by {Cartan subalgebra} we mean a maximal $\ad$-diagonalizable subalgebra of $\mfg$. 
Since $\F$ is algebraically closed, Cartan subalgebras are abelian. 
If in addition $\mfg$ is finite-dimensional its Cartan subalgebras are precisely its self-normalizing nilpotent subalgebras.
}, 
we believe that the problem above is of interest for many different $\mfg$, given the role that Cartan subalgebras play in representation theory.

Slightly modifying the above setup, we propose the following generalization of an involutive Lie algebra automorphism.

\begin{defn}  \label{def:pseudo-inv}
We call $\theta \in \Aut(\mfg)$ a \emph{pseudo-involution} if there exists a $\theta$-stable Cartan subalgebra $\mft \subseteq \mfg$ such that $\theta|_\mft$ is an involution, i.e.\ $\theta^2|_\mft = \id_\mft$.
\hfill \defnend
\end{defn}

Immediately we see that an involutive automorphism of $\mfg$ is a pseudo-involution if it stabilizes a Cartan subalgebra.
We expect that the resulting linear decomposition
\eq{
\mft = \mft^\theta \oplus \mft^{-\theta},
}
although weaker than the corresponding decomposition for $\mfg$, is strong enough to serve as a natural minimal condition for extensions of results in harmonic analysis on symmetric spaces to more general homogeneous spaces.

To address question (ii) we propose the following generalization of the notion of a symmetric pair (associated to $\theta$), which relies on the $\mft$-root space decomposition%
\footnote{
By $\mft$-root space we shall always mean a root space which is not associated to $0 \in \mft^*$, i.e.\ not $\mft$ itself. 
} 
of $\mfg$.
Denote by $\Phi^{(\mft)} \subset \mft^* \backslash \{0\}$ the $\mft$-root system and by $\mfg^{(\mft)}_\al \subset \mfg$ the $\mft$-root space associated to $\al \in \Phi^{(\mft)}$.

\begin{defn} \label{def:pseudofixedpoint}
Let $\theta \in \Aut(\mfg)$ be a pseudo-involution and let $\mft \subseteq \mfg$ be a $\theta$-stable Cartan subalgebra such that $\theta|_\mft$ is an involution. 
We call a subalgebra $\mfk \subseteq \mfg$ a \emph{pseudo-fixed-point subalgebra} if 
\begin{align}
\label{defn:psfixptsubalg:1} \mfk \cap \mft &= \mft^\theta, \\
\label{defn:psfixptsubalg:2} \dim \! \Big( \mfk \cap \big( \mfg_\al^{(\mft)} + \theta(\mfg_\al^{(\mft)}) \big) \Big) &= \dim(\mfg_\al^{(\mft)}) \qq \text{for all } \al \in \Phi^{(\mft)}.
\end{align}
We call the pair $(\mfg,\mfk)$ a \emph{pseudo-symmetric pair} (associated to $\theta$). \hfill \defnend
\end{defn}

The two conditions in Definition \ref{def:pseudofixedpoint} can be motivated by showing that they hold for fixed-point subalgebras of (suitable) involutions.
Indeed, let $\theta$ be an involutive automorphism of $\mfg$ with $\theta$-stable Cartan subalgebra $\mft$, so that $\theta(\mfg_\al^{(\mft)}) = \mfg_{\theta^*(\al)}^{(\mft)}$ for all $\al \in \Phi^{(\mft)}$, and set $\mfk = \mfg^\theta$.
We observe that \eqref{defn:psfixptsubalg:1} is trivially satisfied.
If $\theta^*(\al) \ne \al$ then the sum $\mfg_\al^{(\mft)} + \theta(\mfg_\al^{(\mft)})$ is direct and it is straightforward to show that a basis of $\mfg_\al^{(\mft)}$ can be $\theta$-symmetrized to obtain a basis of $\mfg_\al^{(\mft)} + \theta(\mfg_\al^{(\mft)})$.
If $\theta^*(\al) = \al$ then \eqref{defn:psfixptsubalg:2} is equivalent to $\mfg_\al^{(\mft)} \subseteq \mfg^\theta$ and hence one needs the additional assumption that $\theta$ fixes pointwise $\mfg_\al^{(\mft)}$, cf. \cite[5.15]{KW92}.
We obtain that, given an involutive automorphism $\theta$ with stable Cartan subalgebra $\mft$, the fixed-point subalgebra $\mfg^\theta$ is pseudo-fixed-point if $\theta$ fixes pointwise all stable $\mft$-root spaces, i.e. all $\mfg_\al^{(\mft)}$ such that $\al \in (\Phi^{(\mft)})^{\theta^*}$.
Observe that this condition is part of the definition of a \emph{maximally split} involution of $\mfg$, see e.g. \cite[Section 7]{Le02}.
\smallskip

In the rest of the paper we study pseudo-involutions of the second kind and their pseudo-fixed-point subalgebras of a Kac-Moody algebra $\mfg$. 
To make the representation-theoretic role for Cartan subalgebras more concrete we remark here that important categories of $\mfg$-modules are defined in terms of the standard Cartan subalgebra $\mfh$, notably the category $\mc{O}$ introduced by Kac \cite{Ka74} as a Kac-Moody generalization of the category introduced in \cite{Ge70,BGG71}.

Since every involution $\theta$ of the second kind of a Kac-Moody algebra has a stable Cartan subalgebra (see e.g.\ \cite[Lem.\ 5.7]{KW92}), involutions of the second kind are examples of pseudo-involutions.
Moreover, by \cite[Cor.\ 5.19]{KW92}, $\mfg^\theta$ is a pseudo-fixed-point subalgebra.\medskip

Recall that $\mfg$ is defined in terms of combinatorial datum (generalized Cartan matrix and associated Dynkin diagram). 
In terms of particular decorations of Dynkin diagrams, which we call \emph{compatible decorations}, see Definition \ref{def:cdec}, in Theorem \ref{thm:pseudoinv:class} we classify pseudo-involutions of $\mfg$ of the second kind.
It is intermediate between (1) a weaker statement about semisimple automorphisms of the second kind of $\mfg$ given in \cite[4.38-4.39]{KW92} in terms of more general combinatorial datum and (2) a classification of involutions of the second kind of $\mfg$, see \cite[Rmk.~5.33]{KW92}, \cite{BBBR95}, \cite[App.\ A]{Ko14} in terms of Satake diagrams (more special combinatorial datum).
The key ingredients of the proof of Theorem \ref{thm:pseudoinv:class} are basic decomposition results for $\Aut(\mfg)$, see e.g. \cite[Sec.~4]{KW92} and a handy result about twisted involutions in Coxeter groups due to Springer, see \cite{Sp85}.
We give a refinement in Corollary \ref{crl:pseudoinv:class:2} and point to a possible strengthening of this result in Conjecture \ref{conj:pseudoinv:class:2}.
Although these classifications follow straightforwardly from existing results on Kac-Moody algebras and Coxeter groups, we believe that due to the novelty of the concept of pseudo-involution, they qualify as main results of this paper.

If a compatible decoration satisfies an additional constraint then it is called a \emph{generalized Satake diagram}, see Definition \ref{def:GSat}.
Precisely in this case it gives rise to a pseudo-fixed-point subalgebra $\mfk$, see Theorem \ref{thm:k:GSat}, which is a direct Kac-Moody generalization of \cite[Thm.~3.3]{RV20}, written in the language of pseudo-fixed-point subalgebras.
From this an Iwasawa decomposition associated to the pair $(\mfg,\mfk)$ readily follows, see Corollary \ref{crl:Iwasawa}; such a statement was not included in \cite{RV20}.
Note that Iwasawa decompositions are well-known for symmetric spaces in the Kac-Moody setting, see e.g.\ \cite[Cor.~4(b)]{PK83} (written in terms of the Kac-Moody group $G$). 

The restricted Weyl group and restricted root system can be considered for any involutive automorphism of the root system of $\mfg$; in fact  the restricted Weyl group and root system can be studied without direct reference to $\mfg$, see e.g.\ \cite{Sch69,Lu76,He84,He91,Lu95} and also \cite[Ch.~25]{Lu03}, \cite[Sec.\ 2]{DK19} and \cite[Sec.\ 2]{RV20}.
Generalized Satake diagrams already arose in the work \cite{He84} on restricted root systems associated to involutions of finite root systems.
In this paper we consider the Kac-Moody generalization of this and show that generalized Satake diagrams unify various approaches to the restricted root system and Weyl group. 
The key results are Theorem \ref{thm:weylgroups}, Corollary \ref{WbarPhi:stable}, Theorem \ref{thm:Coxeter} and Theorem \ref{thm:Weylgroups2}.

\subsection{Applications in the quantum deformed setting}

If $\mfg$ is a symmetrizable Kac-Moody algebra, the universal enveloping algebra $U\mfg$ possesses a q-deformation $U_q\mfg$ (Drinfeld-Jimbo quantum group), see e.g.~\cite{Ji85,Dr87,Lu94}.
We briefly discuss the role of pseudo-involutions and pseudo-fixed-point subalgebras in the context of reflection equations (quartic braid relations) in representations of $U_q\mfg$.
Note that the canonical Cartan subalgebra $\mfh \subset \mfg$ plays an important role: since $U_q\mfh$ is generated by all invertible elements of $U_q\mfg$, only automorphisms of $\mfg$ which stabilize $\mfh$ can be lifted to algebra automorphisms of $U_q\mfg$.

Given an involutive automorphism $\theta$ of $\mfg$, consider the fixed-point subalgebra $\mfk = \mfg^\theta$.
For finite-dimensional $\mfg$ Letzter identified in \cite{Le99,Le02,Le03} a one-sided coideal subalgebra $U_q\mfk \subseteq U_q\mfg$ which q-deforms $U\mfk$ (cf.~\cite{NS95,NDS97} for alternative approaches to $U_q\mfk$).
A crucial step in Letzter's work was to ``arrange'' $\theta$ (by conjugacy) to make it maximally split: it stabilizes $\mfh$, fixes pointwise all $\theta$-stable positive root spaces and sends other positive root spaces to negative root spaces, see \cite[Eqns.\ (3.1)-(3.3)]{Le99} and \cite[Sec.\ 7]{Le02}.

In addition, there is a ${\sf B}_2$-analogue of quasitriangularity for these subalgebras: there exists an element $\mc K$ in the centralizer of $U_q\mfk$ (in a completion of $U_q\mfg$) which satisfies the type ${\sf B}_2$-braid relation.
The construction of this element was described by Balagovi\'{c} and Kolb in \cite{BK19}, building upon an earlier construction in \cite{BW18} due to Bao and Wang for a particular $\mfk$ in the case $\mfg = \mfsl_N$. 
These works place quantum symmetric pairs in the context of \emph{reflection equation algebras}, see \cite{KS92,KS93,KS09}.
In \cite{RV20} we showed that the above construction for $\mfg$, with minor adjustments, is applicable in a larger setting.
The Lie-theoretic underpinning of this is precisely given by pseudo-involutions and their pseudo-fixed-point subalgebras.

In \cite{Ko14}, Kolb provided the Kac-Moody generalization of Letzter's construction of $U_q\mfk$ of $\mfk = \mfg^\theta$ in the case that $\theta$ is an involution of the second kind, using a combinatorial approach to such $\theta$ indicated in \cite[Sec.~5]{KW92} and discussed in \cite{BBBR95}.
Recall that Kac-Moody algebras of affine type are (extensions of) loop algebras of finite-dimensional simple Lie algebras.
Indeed, in the affine case suitable interwiners for $U_q\mfk$ are known, see e.g.\ \cite{DM03,RV16}, to satisfy the \emph{reflection equation} (boundary Yang-Baxter equation) with spectral parameter \cite{Ch84,Sk88,Ch92}.
Therefore pseudo-fixed-point subalgebras are relevant to the study of (quantum) integrability in the presence of a boundary.

In \cite{AV20} universal versions of these interwiners were constructed by extending the approach from \cite{BK19} to the Kac-Moody setting.
Moreover, Kolb in \cite{Ko21} showed that the existence of the bar involution for $U_q\mfk$ follows from the construction in \cite{AV20} which does not rely on explicit presentations of $U_q\mfk$.
The works \cite{AV20,Ko21} deal with quantizations of pseudo-fixed-point subalgebras $\mfk$ of Kac-Moody algebras, not just fixed-point subalgebras of involutions.
Therefore the present paper provides a Lie-theoretic context for the q-deformed constructions in \cite{AV20,Ko21}.

\enlargethispage{1.5em} 

This work is also intended to serve as a platform supporting generalizations, in terms of a homogeneous framework for the entire range of pseudo-symmetric pairs, of advanced representation-theoretic applications, both undeformed and q-deformed.
Thus far these have mainly focused on quasi-split symmetric pairs; we mention \cite{SV15,RSV15a,RSV15b,RSV18}, \cite{SR20,RS20,St21} and \cite{ES18,BW18,LW20} as illustrative examples of various flavours.

\subsection{Outline}

This paper is organized as follows. 
Section \ref{sec:pseudoinvolutions} discusses pseudo-involutions in the Kac-Moody setting in terms of compatible decorations.
Subsections \ref{sec:GCM}-\ref{sec:Autg} are used to introduce background material, mainly drawn from \cite{KW92} and \cite{BBBR95}, and fix notation. 

In Section \ref{sec:pseudofixedpoint} we look at pseudo-fixed-point subalgebras of Kac-Moody algebras in terms of generalized Satake diagrams, thereby providing Kac-Moody generalizations of the finite-dimensional work in \cite[Sec.~2~and~3]{RV20}.

In Section \ref{sec:restricted} we survey, combine and extend various approaches to the corresponding restricted root system and restricted Weyl group, creating a comprehensive catalogue of results in this area.

Finally, in Appendix \ref{sec:tables} we provide a classification of generalized Satake diagrams of finite and affine types, combining and adding to existing lists, featuring a new easy-to-use notation.
We hope that these tables prove to be a useful reference for the community.\medskip 

In this paper we will follow (almost precisely) Carter's ``Dynkin notation'' for the affine root systems, see \cite[App.]{Ca05}.
We have the following correspondence with the customary notation due to Kac, see e.g. \cite[Tables Aff 1-3]{Ka90}:
\[
\begin{array}{llllllllllllllll}
\text{(Dynkin)} & \wh{\sf A}_{n \ge 1} & \wh{\sf B}_{n \ge 3} & \wh{\sf B}^\vee_{n \ge 3} & \wh{\sf C}_{n \ge 2} & \wh{\sf C}^\vee_{n \ge 2} & \wh{\sf C}'_{n \ge 1}  & \wh{\sf D}_{n \ge 4} & \wh{\sf E}_{6,7,8} & \wh{\sf F}_4 & \wh{\sf F}_4^\vee & \wh{\sf G}_2 & \wh{\sf G}_2^\vee
\\[4pt]
\text{(Kac)} & {\sf A}_n^{(1)} & {\sf B}_n^{(1)} & {\sf A}_{2n-1}^{(2)} & {\sf C}_n^{(1)} & {\sf D}_{n+1}^{(2)} & {\sf A}_{2n}^{(2)} & {\sf D}_n^{(1)} & {\sf E}_{6,7,8}^{(1)} & {\sf F}_4^{(1)} & {\sf E}_6^{(2)} & {\sf G}_2^{(1)} & {\sf D}_4^{(3)}
\end{array}
\]
where for the classical Lie types we have indicated the usual constraints on $n$ (the rank of the generalized Cartan matrix) to avoid low-rank coincidences.\medskip

{\it Acknowledgements.} The authors are grateful to the referees for valuable suggestions.
The second-named author was supported by the UK Engineering and Physical Sciences Research Council, grant number EP/R009465/1. 



\section{Pseudo-involutions in terms of compatible decorations} \label{sec:pseudoinvolutions}

This section gives a basic and, we hope, pedagogical account of the classification of pseudo-involutions of the second kind of Kac-Moody algebras in terms of decorations of Dynkin diagrams.
We begin by reviewing some theory of Kac-Moody algebras and their automorphisms, for which our main references are \cite{KW92} and \cite{BBBR95}.


\subsection{Generalized Cartan matrices and Dynkin diagrams} \label{sec:GCM}

Given a finite index set $I$, let $A=(a_{ij})_{i,j \in I}$ be a generalized Cartan matrix, i.e.\ $a_{ii}=2$ for all $i \in I$, $a_{ij} \in \Z_{\le 0}$ for all distinct $i,j \in I$ and $a_{ij} =0$ if and only if $a_{ji}=0$.
We will always assume that $A$ is symmetrizable, i.e.\ there exist setwise-coprime positive integers $\eps_i$ ($i \in I$) such that $\eps_i a_{ij} = \eps_j a_{ji}$ for all $i,j \in I$.
The group of \emph{diagram automorphisms} of $A$ is the finite group
\eq{
\Aut(A) := \{ \tau: I \to I \text{ invertible} \, : \, a_{\tau(i)\tau(j)} = a_{ij} \text{ for all } i,j \in I \}.
}
Let $J \subseteq I$.
The principal submatrix  $A_J := (a_{ij})_{i,j \in J}$ is also a symmetrizable generalized Cartan matrix.
We set 
\eq{
J^\perp := \{ i \in I \, : \, a_{ij} =0 \text{ for all } j\in J\}.
}
We call $K \subseteq J$ a \emph{component} of $J$ if $J \subseteq K \cup K^\perp$.
We call $J$ \emph{connected} if it is nonempty and has no nonempty proper components.
From now on we assume $I$ is connected; hence $(\eps_i)_{i \in I}$ is uniquely determined by $A$.

We say that $A$ is \emph{of finite type} if $\det(A_J) > 0$ for all $J \subseteq I$ and \emph{of affine type} if $\det(A)=0$ and $\det(A_J) > 0$ for all $J \subset I$.
Recall that if $a_{ij}a_{ji} \le 4$ for all $i,j \in I$, which is satisfied if $A$ is of finite or affine type, the Dynkin diagram associated to $(I,A)$ is an oriented multigraph with vertex set $I$ with the edges determined as follows.
Distinct vertices $i,j \in I$ are connected by $\max(|a_{ij}|,|a_{ji}|)$ edges; if $\eps_i=\eps_j$ we do not assign an orientation to the edges between $i$ and $j$ and otherwise these edges point to the node with the smaller value of $\eps$.


\subsection{Braid group and Weyl group} \label{sec:braidgroup}

Associated to $A$ is the Artin-Tits braid group
\eq{
\Br = \Big\langle \{ T_i \}_{i \in I} \, : \, \underbrace{T_iT_j \cdots}_{m_{ij}} = \underbrace{T_jT_i \cdots}_{m_{ij}} \qu \text{if } i \ne j \Big\rangle
}
where $m_{ij}=2,3,4,6$ if $a_{ij}a_{ji}=0,1,2,3$, respectively, and $m_{ij}=\infty$ if $a_{ij}a_{ji} \ge 4$.
The assignment $T_i \mapsto s_i$ extends to a surjective group map from $\Br$ to the \emph{Weyl group}
\eq{
W = \Big\langle \{ s_i \}_{i \in I} \, : \, s_i^2=1, \, (s_is_j)^{m_{ij}} = 1 \text{ if } i \ne j \Big\rangle.
}
The pair $(W,\{ s_i \}_{i \in I})$ is a Coxeter system 
and hence associated to it is the length function $\ell$ and the notion of reduced expression, see e.g. \cite[Ch. IV, \S 1.1]{Bo68}.
We define a map $T: W \to \Br$ by $T(s_{i_1}\cdots s_{i_\ell})=T_{i_1}\cdots T_{i_\ell}$ if $s_{i_1} \cdots s_{i_\ell}$ is reduced; hence $T(ww') = T(w)T(w')$ for all $w,w' \in W$ such that $\ell(ww') = \ell(w)+\ell(w')$.

The action of $\Aut(A)$ on $\Br$ defined by $\tau(T_i) = T_{\tau(i)}$ descends to an action of $\Aut(A)$ on $W$ which respects $\ell$ so that $\tau(T(w)) = T(\tau(w))$ for all $\tau \in \Aut(A)$, $w \in W$.


\subsection{Minimal realization and bilinear forms} \label{sec:h}

Recall that $\F$ is an algebraically closed field of characteristic 0.
We fix a \emph{minimal realization} $(\mfh,\Pi,\Pi^\vee)$ of $A$ over $\F$; that is, $\mfh$ is a $(|I|+\cork(A))$-dimensional $\F$-linear space and $\Pi^\vee = \{h_i\}_{i \in I} \subset \mfh$ and $\Pi = \{\al_i\}_{i \in I} \subset \mfh^*$ are linearly independent subsets such that $\al_j(h_i)=a_{ij}$ for all $i,j \in I$.
There is a unique symmetric bilinear $\F$-valued form $(\phantom{x},\phantom{x})$ on $\mfh':= \Sp_\F \Pi^\vee$ with the property $(h_i,h_j) = \eps_j^{-1} a_{ij}$ for all $i,j \in I$.
By \cite[2.1]{Ka90} one may choose any complementary subspace $\mfh''$ to $\mfh'$ in $\mfh$ and extend $(\phantom{x},\phantom{x})$ to a nondegenerate symmetric bilinear $\F$-valued form on $\mfh$ with the properties
\eq{ \label{form:def}
(h,h_j) = \eps_j^{-1} \al_j(h), \qq (h',h'')=0 \qq \text{for all } h \in \mfh, \, j \in I \text{ and } h',h'' \in \mfh''.
}
Consider the linear map $\nu: \mfh \to \mfh^*$ defined by $\nu(h)(h') = (h,h')$ for all $h,h' \in \mfh$; it satisfies $\nu(h_i) = \eps_i^{-1} \al_i$ for all $i \in I$.
Since $(\phantom{x},\phantom{x})$ is nondegenerate, $\nu$ is an isomorphism.
We now define a symmetric bilinear form on $\mfh^*$ by $(\la,\mu)=(\nu^{-1}(\la),\nu^{-1}(\mu))$, which satisfies $(\al_i,\al_j) = \eps_i a_{ij}$ for all $i,j \in I$.
In the remainder of this paper, whenever we discuss orthogonality, self-adjointness or isometry, it will always be with respect to $(\phantom{x},\phantom{x})$.

There are faithful linear isometric actions of $W$ on $\mfh$ and $\mfh^*$ determined by
\eq{
s_i(h) = h-\al_i(h)h_i, \qq s_i(\al) = \al - \al(h)\al_i \qq \text{for } i \in I, \, h \in \mfh, \, \al \in \mfh^*.
}
The group $\Aut(A)$ acts via relabelling on $\mfh'$; this can be extended to an action on $\mfh$ by choosing $\mfh''$ according to \cite[4.19]{KW92}.
The isomorphism $\nu$ allows us to define an action of $\Aut(A)$ on $\mfh^*$ and the bilinear forms $(\phantom{x},\phantom{x})$ are $\Aut(A)$-invariant.


\subsection{Kac-Moody algebra and roots}

Let $\mfg=\mfg(A)$ be the (indecomposable symmetrizable) Kac-Moody Lie algebra defined in terms of $A$ with Chevalley generators $e_i,f_i$ ($i \in I$) and $h_i:=[e_i,f_i]$, see \cite[1.1 and 1.2]{KW92}.
The derived subalgebra of $\mfg$ is the $\cork(A)$-codimensional subalgebra $\mfg'$ generated by all $e_i$ and $f_i$ ($i \in I$).
The centre $\mfc$ is contained in $\mfh' = \mfh \cap \mfg'$.
The action of $\Aut(A)$ on $\mfh$ extends to an action by Lie algebra automorphisms of $\mfg$ if we set $\tau(e_i)=e_{\tau(i)}$ and $\tau(f_i)=f_{\tau(i)}$ for all $\tau \in \Aut(A)$, $i \in I$.

We denote the subalgebra of $\mfg$ generated by all $e_i$ by $\mfn^+$ and the subalgebra generated by all $f_i$ by $\mfn^-$.
As identities of $\mfh$-modules, we have the triangular decomposition $\mfg = \mfn^+ \oplus \mfh \oplus \mfn^-$ holds and the root space decomposition
\eq{ 
\label{g:rootspacedecomposition}
\mfg = \bigoplus_{\al \in \mfh^*} \mfg_\al, \qq \mfg_\al = \{ x \in \mfg\, : \, \forall h \in \mfh,\, [h,x] = \al(h)x \}.
}

We call $\al \in \mfh^*$ a \emph{root} if $\mfg_\al \ne \{ 0 \}$ and define the root system $\Phi$ as the set of nonzero roots (note that $\mfg_0 = \mfh$). 
The \emph{root lattice} is $Q = \Z \Phi = \Z \Pi$. 
We denote the positive cone of $Q$ by $Q^+ = \Z_{\ge 0} \Pi$.
We set $\Phi^+ = \Phi \cap Q^+$ so that $\mfn^+ = \bigoplus_{\al \in \Phi^+} \mfg_\al$, $\Pi \subseteq \Phi^+$ and $\Phi = \Phi^+ \cup (-\Phi^+)$.

The action of $W$ on $\mfh^*$ stabilizes $\Phi$.
We call $\al \in \Phi$ \emph{real} if $\al \in W(\Pi)$ and otherwise \emph{imaginary}.
If $\al = w(\al_i)$ for some $w \in W$, $i \in I$ then $\al^\vee := w(h_i) \in \mfh$ is well-defined, $\Phi \cap \Z \al = \{ \al,-\al \}$, $(\al,\al)>0$, the root space $\mfg_\al$ is 1-dimensional and for all $x \in \mfg_\al$ the adjoint map $\ad(x): \mfg \to \mfg$ is a locally nilpotent derivation.


\subsection{Kac-Moody group and triple exponentials} \label{sec:tripleexp}

Recall the Kac-Moody group $G$ and, for $\al \in W(\Pi)$, the map $\exp: \mfg_\al \to G$, see \cite[1.3]{KW92}.
There is a group morphism $\Ad: G \to \Aut(\mfg)$ uniquely determined by $\Ad(\exp(x)) = \exp(\ad(x))$ for all $x \in \mfg_\al$ with $\al \in \Phi$ real.
There is also a group morphism from $\Br$ to $G$ given by \emph{triple exponentials}
\eq{
T_i \mapsto n_i:= \exp(e_i)\,\exp(-f_i)\,\exp(e_i).
}
Denote by $N$ the subgroup of $G$ which is the image of this morphism; note that $\Aut(A)$ acts on $N$ by relabelling.
We compose the map $T:W \to \Br$ defined in Section \ref{sec:braidgroup} with this morphism to obtain a map $n:W \to G$ with image $N$ which satisfies $n(s_i)=n_i$ for all $i \in I$ and $n(ww')=n(w)\,n(w')$ if $w,w' \in W$ are such that $\ell(ww')=\ell(w)+\ell(w')$.
For all $i \in I$ and all $\tau \in \Aut(A)$ it follows that
\eq{
\Ad(n_i)|_\mfh = s_i, \qq \tau \circ \Ad(n_i) = \Ad(n_{\tau(i)}) \circ \tau. 
}

Let $\wt H:= \Hom(Q,\F^\times)$ denote the group of characters on the root lattice $Q$.
There is a group morphism $\Ad : \wt H \to \Aut(\mfg)$ given by
\eq{
\Ad(\chi)(x) = \chi(\al)\, x, \qq \chi \in \wt H, \, x \in \mfg_\al, \, \al \in \Phi,
}
which induces an action of $\wt H$ on $G$, so we may consider $\wt H \ltimes G$ and $\Ad(\wt H \ltimes G) < \Aut(\mfg)$, see \cite[1.10]{KW92}.
By \cite[Prop. 4.10.1]{BBBR95},
\eq{ \label{nw:inverse}
\Ad(n(w^{-1})) = \Ad(\ze(w)) \circ \Ad(n(w))^{-1} 
}
for all $w \in W$, where $\ze(w) \in \wt H$ is defined by 
\eq{
\ze(w)(\la) = \prod_{\al \in \Phi^+ \cap\hspace{0.075em} w(-\Phi^+)} (-1)^{\la(\al^\vee)}, \qq \la \in Q.
}


\subsection{Subdiagrams of finite type} \label{sec:subfinite}

Given a subset $X \subseteq I$ we may consider the Lie subalgebra $\mfg_X := \langle \{ e_i,f_i \}_{i \in X} \rangle$, the Cartan subalgebra $\mfh_X = \mfh \cap \mfg_X$, the parabolic Weyl group $W_X := \langle \{ s_i \}_{i \in X} \rangle$ and the root subsystem $\Phi_X := \Phi \cap Q_X$ where $Q_X = \sum_{i \in X} \Z \al_i$.
The following statements are equivalent: $A_X$ is of finite type; the restriction of $(\phantom{x},\phantom{x})$ to $\mfh_X \times \mfh_X$ is positive definite; $W_X$ is finite; $\mfg_X$ is finite-dimensional; $\Phi_X$ is finite; all elements of $\Phi_X$ are real.
In this case $\mfg_X$ is semisimple and we simply say that $X$ is of finite type, which we assume henceforth.

The unique longest element $w_X \in W_X$ is necessarily an involution and there exists an involutive diagram automorphism $\oi_X \in \Aut(A_X)$, called \emph{opposition involution}, such that $w_X(\al_i) = -\al_{\oi_X(i)}$ for all $i \in X$.
Immediately we obtain
\eq{ \label{wX:conjugation}
w_X \cdot s_i = s_{\oi_X(i)} \cdot w_X \qq \text{for all } i \in X
}
in $W_X$.
The opposition involution of a connected subset $X \subseteq I$ of finite type is the trivial diagram automorphism except in the case where $X$ is of type ${\sf A}_r$ with $r>1$, ${\sf D}_r$ with $r >3$ odd or ${\sf E}_6$; in these cases $\oi_X$ is the unique nontrivial diagram automorphism.

Because any $\tau \in \Aut(A)$ preserves the length function, we obtain
\eq{ \label{wX:diagaut}
\tau \circ w_X = w_X \circ \tau \qq \Longleftrightarrow \qq \tau \in \Aut_X(A)
}
where $\Aut_X(A)$ is the subgroup $\{ \tau \in \Aut(A) \, : \, \tau(X) \subseteq X \}$.
We denote $n_X := n(w_X)$ and obtain that $\tau \in \Aut(A)$ commutes with $\Ad(n_X)$ in $\Aut(\mfg)$ if and only if $\tau \in \Aut_X(A)$.
By \cite[Cor.\ 4.10.3]{BBBR95} we have
\eq{
\label{AdnX:square} \Ad(n_X^2) = \Ad(\ze_X) \qq \text{where} \qq \ze_X := \ze(w_X) \in \wt H.
}
It follows that $\ze_X(\la) = (-1)^{\la(2\rho^\vee_X)}$ for all $\la \in Q$ with $\rho^\vee_X$ half the sum of positive coroots of $\mfg_X$ (the sum of the fundamental coweights of $\mfg_X$).


\subsection{Automorphisms of $\mfg$} \label{sec:Autg}

For any subset $\mft \subseteq \mfg$ we denote by $\Aut(\mfg,\mft)$ the subgroup of $\Aut(\mfg)$ of automorphisms stabilizing $\mft$ and by $\Aut(\mfg;\mft)$ the subgroup of $\Aut(\mfg,\mft)$ of automorphisms fixing $\mft$ pointwise.
Furthermore, if $K$ is a subgroup of $\Aut(\mfg)$ and $\theta,\theta' \in \Aut(\mfg)$ are such that $k \circ \theta = \theta' \circ k$ for some $k \in K$ we say that $\theta$ and $\theta'$ are \emph{$K$-conjugate} and write $\theta \sim_K \theta'$.

The subgroup $\Aut(\mfg;\mfg')$ is the group of \emph{transvections} of $\mfg$ described in \cite[4.20]{KW92};
its elements are of the form $\si_f := \id + f \circ \pi''$ where $f$ is an arbitrary $\F$-linear map: $\mfh'' \to \mfc$ and $\pi''$ is the projection from $\mfg$ onto $\mfh''$ with respect to the decomposition $\mfg = \mfg' \oplus \mfh''$.
Such elements commute with $\Ad(\wt H \ltimes G)$ and satisfy $\tau \circ \si_f \circ \tau^{-1} = \si_{\tau \circ f \circ \tau^{-1}}$ for all $\tau \in \Aut(A)$.
The following then defines a subgroup of $\Aut(\mfg)$:
\eq{
\Inn(\mfg):=\Aut(\mfg;\mfg') \times \Ad(\wt H \ltimes G).
}

The \emph{Chevalley involution} is the unique involutive Lie algebra automorphism $\om$ such that $\om(e_i)=-f_i$ and $\om|_\mfh = -\id_\mfh$.
Note that $\om$ centralizes the subgroups $\Aut(A)$, $\Aut(\mfg;\mfg')$ and $\Ad(N)$ of $\Aut(\mfg)$.
For all $X$ of finite type we have
\eq{
\label{AdnX:gX} \Ad(n_X)|_{\mfg_X} = \om \circ \oi_X|_{\mfg_X},
}
see e.g. \cite[Lem.\ 4.9]{BBBR95}; in particular $\om \in \Aut(A) \ltimes \Inn(\mfg)$ if $A$ is of finite type.
We set $\Out(A)=\Aut(A)$ if $A$ is of finite type and $\Out(A)=\{ \id,\om\} \times \Aut(A)$ otherwise.
The following decomposition of $\Aut(\mfg)$ is established in \cite[4.23]{KW92}:
\eq{ \label{Autg:decomposition}
\Aut(\mfg) = \Out(A) \ltimes \Inn(\mfg).
}

If $\theta \in \Aut(\mfg,\mfh)$ then we denote the dual linear map $(\theta|_\mfh)^*$ on $\mfh^*$ simply by $\theta^*$.
From the definition of root space it follows that $\theta(\mfg_\al) = \mfg_{(\theta^*)^{-1}(\al)}$ for all $\al \in \Phi$.
In particular, $\theta^*$ stabilizes $\Phi$ and $Q$.
Furthermore, $\theta$ acts on $\wt H$ via $(\theta \ast \chi)(\la) = \chi((\theta^*)^{-1}(\la))$ for all $\la \in Q$, $\chi \in \wt H$, see \cite[4.23]{KW92}.
We denote by $\wt H^\theta$ the subgroup of $\wt H$ consisting of those $\chi \in \wt H$ such that $\theta \ast \chi = \chi$.
Note that, since the field $\F$ is closed under taking square roots, one has
\eq{ \label{tildeH:conjugacy}
\forall (\theta,\chi) \in \Aut(\mfg,\mfh) \times \wt H \qu \exists \chi' \in \wt H^\theta: \; \Ad(\chi) \circ \theta \sim_{\Ad(\wt H)} \Ad(\chi') \circ \theta.
}

By \cite[1.16 (i)]{KW92}, $\Aut(\mfg,\mfh) \cap \Ad(G) = \Ad(N)$; since elements of $\Out(A)$, $\Aut(\mfg;\mfg')$ and $\Ad(\wt H)$ all stabilize $\mfh$ we obtain
\eq{ \label{Autgh:decomposition}
\Aut(\mfg,\mfh) = \Out(A) \ltimes \Inn(\mfg,\mfh), \qq \Inn(\mfg,\mfh):=\Aut(\mfg;\mfg') \times \Ad(\wt H \ltimes N).
}
Note that elements of $\Aut(\mfg,\mfh)$ send real root spaces to real root spaces.

By \cite[Thm.\ 3]{PK83}, every Borel subalgebra of $\mfg$ is $\Ad(G)$-conjugate to $\mfb^+$ or $\mfb^-$, where $\mfb^\pm := \langle \mfh, \mfn^\pm \rangle$.
In particular, given $\theta \in \Aut(\mfg)$, $\theta(\mfb^+)$ is $\Ad(G)$-conjugate to $\mfb^+$ or $\mfb^-$ and we say that $\theta$ is \emph{of the first} or \emph{second kind}, respectively.
According to \cite[4.6]{KW92}, $\theta$ is of the second kind if and only if $\theta(\mfb^+) \cap \mfb^+$ is finite-dimensional.
The set of automorphisms of the first kind is a subgroup of $\Aut(\mfg)$ and its coset with respect to $\om$ is the set of automorphisms of the second kind.
By \eqref{AdnX:gX}, if $\mfg$ is finite-dimensional then these two subsets coincide; otherwise they define a partition of $\Aut(\mfg)$.


\subsection{Twisted involutions and compatible decorations} \label{sec:cdec}
 
Let $\theta \in \Aut(\mfg,\mfh)$ be of the second kind such that $\theta^2|_\mfh=\id_\mfh$.
By \eqref{Autgh:decomposition} there exist a linear map $f: \mfh'' \to \mfc$, $\phi \in \wt H$, $w \in W$ and $\tau \in \Aut(A)$ such that
\eq{
\theta = \si_f \circ \Ad(\phi \cdot n(w)) \circ \om \circ \tau.
} 
By $\Aut(\mfg;\mfg')$-conjugacy we may assume that $f$, and hence $\si_f$, commutes with $\tau$.
From the condition that the restriction of $\theta$ to $\mfh$ is involutive we obtain
\eq{
f=0, \qq \tau(w)=w^{-1}, \qq \tau^2 = \id_I.
}
In particular, $w \in W$ is a $\tau$-twisted involution.
Invoking the result \cite[Prop.\ 3.3]{Sp85}, we obtain that there exists $v \in W$ and $X \subseteq I$ of finite type such that $\tau|_X = \oi_X$ and
\eq{
w = v \circ w_X \circ \tau(v)^{-1}.
}
Recalling \eqref{nw:inverse} we obtain
\eq{
\theta = \Ad(\phi \cdot n(v) \cdot n_X) \circ \om \circ \tau \circ \Ad(\ze(w) \cdot n(v)^{-1}).
}
Hence there exists $\chi \in \wt H$, $X \subseteq I$ of finite type and involutive $\tau \in \Aut_X(A)$ such that $\tau|_X = \oi_X$ and
\eq{
\theta \sim_{\Ad(N)} \Ad(\chi \cdot n_X) \circ \om \circ \tau
}
Now by \eqref{tildeH:conjugacy} we may assume that $\chi \in \wt H^{\theta(X,\tau)}$ where
\eq{ \label{theta:def}
\theta(X,\tau) := \Ad(n_X) \circ \om \circ \tau.
}
Note that $\theta(X,\tau)$ is a particular case of a \emph{special semisimple automorphism of the second kind} as defined in \cite[4.38]{KW92}.
We observe that the three factors in \eqref{theta:def} pairwise commute so that $\theta(X,\tau)^2 = \Ad(\ze_X)$ by \eqref{AdnX:square}; in particular, the order of $\theta(X,\tau)$ divides 4.
Also, by \eqref{AdnX:gX}, the condition $\tau|_X = \oi_X$ implies that $\theta(X,\tau)|_{\mfg_X} = \id_{\mfg_X}$, i.e. $\theta(X,\tau)$ fixes pointwise all $\theta(X,\tau)$-stable root spaces.

We are led to the following definition, the natural Kac-Moody analogue of \cite[Eqn.\ (2.8)]{RV20}.

\begin{defn} \label{def:cdec}
Let $X \subseteq I$ and $\tau \in \Aut_X(A)$.
We call $(X,\tau)$ a \emph{compatible decoration} if $X$ is of finite type, $\tau^2=\id_I$ and $\tau|_X = \oi_X$. 
Furthermore, we call $(X,\tau,\chi)$ an \emph{enriched compatible decoration} if $(X,\tau)$ is a compatible decoration and $\chi \in \wt H^{\theta(X,\tau)}$. 
If $(X,\tau,\chi)$ is an enriched compatible decoration, in a minor abuse of notation we will write $\theta(X,\tau,\chi) = \Ad(\chi) \circ \theta(X,\tau)$.
\hfill \defnend
\end{defn}

The following result summarizes the above discussion.

\begin{prop} \label{prop:thetafactorization} 
Let $\theta \in \Aut(\mfg,\mfh)$ be a pseudo-involution of the second kind.
Then there exists an enriched compatible decoration $(X,\tau,\chi)$ such that $\theta \sim_{\Inn(\mfg,\mfh)} \theta(X,\tau,\chi)$.
\end{prop}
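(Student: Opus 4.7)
The plan is to follow the reductions sketched in Section \ref{sec:cdec} while carefully tracking which conjugating automorphisms lie in $\Inn(\mfg,\mfh)$. First I would apply the decomposition \eqref{Autgh:decomposition}: since $\theta$ is of the second kind, its $\Out(A)$-factor must contain $\om$, so I can write $\theta = \si_f \circ \Ad(\phi \cdot n(w)) \circ \om \circ \tau$ for some linear map $f : \mfh'' \to \mfc$, some $\phi \in \wt H$, some $w \in W$ and some $\tau \in \Aut(A)$. Using an $\Aut(\mfg;\mfg') \subseteq \Inn(\mfg,\mfh)$ conjugation I can assume that $f$ commutes with $\tau$; imposing $\theta^2|_\mfh = \id_\mfh$ then forces $f = 0$, $\tau^2 = \id_I$, and $\tau(w) = w^{-1}$, so that $w$ is a $\tau$-twisted involution in $W$.

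Next I would invoke Springer's theorem \cite[Prop.\ 3.3]{Sp85} to obtain $v \in W$ and $X \subseteq I$ of finite type with $\tau|_X = \oi_X$ and $w = v \cdot w_X \cdot \tau(v)^{-1}$. Substituting this into $\theta$, using \eqref{nw:inverse} to convert $\Ad(n(\tau(v)^{-1}))$ into $\Ad(n(v))^{-1}$ up to a central character, and using the equivariance $\tau \circ \Ad(n(u)) = \Ad(n(\tau(u))) \circ \tau$, I can rewrite $\theta$ in the form $\Ad(n(v)) \circ \bigl[\Ad(\chi \cdot n_X) \circ \om \circ \tau\bigr] \circ \Ad(n(v))^{-1}$ for a suitable $\chi \in \wt H$ that collects $\phi$, the character $\ze(w)$ arising from \eqref{nw:inverse}, and the central contribution produced by commuting $\tau$ past the inverted braid factor. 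Since $\Ad(n(v)) \in \Ad(N) \subseteq \Inn(\mfg,\mfh)$, this identifies $\theta$ with $\Ad(\chi) \circ \theta(X,\tau)$ modulo $\Inn(\mfg,\mfh)$-conjugacy.

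To finish, I would apply \eqref{tildeH:conjugacy} to the pair $(\theta(X,\tau),\chi)$, obtaining a further $\Ad(\wt H) \subseteq \Inn(\mfg,\mfh)$ conjugation that replaces $\chi$ by some $\chi' \in \wt H^{\theta(X,\tau)}$. The triple $(X,\tau,\chi')$ then meets the requirements of Definition \ref{def:cdec} for an enriched compatible decoration and satisfies $\theta \sim_{\Inn(\mfg,\mfh)} \theta(X,\tau,\chi')$.

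The main obstacle is the algebraic bookkeeping in the middle step: one must carefully combine the equivariance of $\tau$ on the braid lifts $n(u)$ with \eqref{nw:inverse} to push $\om \circ \tau$ past the inverted factor $\Ad(n(\tau(v)^{-1}))$ and collect all of the resulting central twists into the single character $\chi$, while verifying at each stage that the conjugating element used lies in $\Inn(\mfg,\mfh)$ and not merely in $\Aut(\mfg,\mfh)$. The conceptual content beyond this routine manipulation is essentially Springer's canonical form for twisted involutions together with the commutation identities already recorded for $\om$, $\Ad(n(w))$, $\Ad(\wt H)$ and the diagram action of $\Aut(A)$.
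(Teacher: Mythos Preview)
Your proposal is correct and follows essentially the same route as the paper: the argument in Section~\ref{sec:cdec} proceeds by exactly the sequence you outline---decomposition via \eqref{Autgh:decomposition}, reduction of the transvection factor by $\Aut(\mfg;\mfg')$-conjugacy, extraction of the constraints $f=0$, $\tau^2=\id_I$, $\tau(w)=w^{-1}$ from $\theta^2|_\mfh=\id_\mfh$, application of Springer's canonical form for twisted involutions, the rewriting using \eqref{nw:inverse} to effect an $\Ad(N)$-conjugation, and finally the adjustment of $\chi$ via \eqref{tildeH:conjugacy}. Your emphasis on verifying that every conjugating element lies in $\Inn(\mfg,\mfh)$ rather than merely $\Aut(\mfg,\mfh)$ is exactly the care the statement requires, and the bookkeeping you flag as the main obstacle is indeed routine once the commutation rules for $\om$, $\tau$, $\Ad(N)$ and $\Ad(\wt H)$ recorded earlier in the section are used.
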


Compatible decorations are indicated diagrammatically by decorating the underlying Dynkin diagram as follows: fill the nodes corresponding to $X$ and indicate the nontrivial $\tau$-orbits by bidirectional single arrows.
The following basic properties will be useful later.

\begin{lemma} \label{lem:Xtau:basics}
Let $(X,\tau)$ be a compatible decoration, let $i \in I \backslash X$ and let $Z$ be the union of connected components of $X \cup \{ i,\tau(i) \}$ containing $i$ or $\tau(i)$.
The following statements are true.
\begin{enumerate}
\item \label{lem:Xtau:basics:connected}
The pair $(X \cap Z,\tau|_Z)$ is a compatible decoration of the generalized Cartan matrix $A_Z$, and either $Z$ is of type ${\sf A}_1 \times {\sf A}_1$ or $Z$ is connected.
\item \label{lem:Xtau:basics:theta:identity}
One has the following identity for linear maps on $\mfh$ (or linear maps on $\mfh^*$):
\eq{ \label{theta:identity}
(\theta(X,\tau)-\id) \circ (\tau-\id) = 0.
}
\item \label{lem:Xtau:basics:wX:formula}
If $\{ \ka^\vee_j \}_{j \in X}$ denotes the set of fundamental coweights of $\mfg_X$, then
\eq{ \label{wX:formula}
w_X(\al_i) = \al_i + \sum_{j \in X} v_{ij} \al_j, \qq v_{ij} = -(\al_i + \al_{\tau(i)})(\ka^\vee_j) \in \Z_{\ge 0}.
}
\end{enumerate}
\end{lemma}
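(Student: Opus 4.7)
The plan is to prove the three parts in the order (i), (iii), (ii); each admits a largely independent argument, with (ii) following from a short decomposition of $\mfh$ that bypasses (iii).

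For (i), the starting observation is that $\tau$ stabilises $X \cup \{i,\tau(i)\}$ (since $\tau(X)=X$ and $\tau$ swaps $i \leftrightarrow \tau(i)$), hence permutes its connected components; the component of $i$ is swapped with that of $\tau(i)$, so both lie in $Z$ and $\tau(Z)=Z$. Moreover, $X \cap Z$ is a union of whole connected components of $X$ (any such component meeting $Z$ is connected in $X \cup \{i,\tau(i)\}$, hence contained in a single component thereof, which lies in $Z$), so $\tau|_{X \cap Z} = \oi_X|_{X \cap Z} = \oi_{X \cap Z}$, making $(X \cap Z, \tau|_Z)$ a compatible decoration of $A_Z$. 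For the connectedness claim, suppose $Z$ is disconnected; then $i \ne \tau(i)$ lie in distinct components of $X \cup \{i,\tau(i)\}$. If $i$ had any neighbour $j \in X$, then $\tau(j) \in X$ would belong to the same connected component of $X$ as $j$ (because $\oi_X$ preserves each connected component of $X$) and be adjacent to $\tau(i)$, forcing $i$ and $\tau(i)$ into a common component via a path through $X$ — a contradiction. Hence $i$ and, by symmetry, $\tau(i)$ have no $X$-neighbours, and they are not adjacent to each other, so $Z = \{i,\tau(i)\}$ is of type ${\sf A}_1 \times {\sf A}_1$.

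For (iii), since $i \notin X$ only $Q_X$-terms are added when successively applying the $s_j$ ($j \in X$), yielding the announced form with integer coefficients $v_{ij}$. To identify them, pair with $\ka_k^\vee \in \mfh_X$ (well-defined as $X$ is of finite type). Using self-adjointness $w_X(\al_i)(\ka_k^\vee) = \al_i(w_X(\ka_k^\vee))$ together with the short duality computation $w_X(\ka_k^\vee) = -\ka_{\oi_X(k)}^\vee$ (from $\al_j(w_X(\ka_k^\vee)) = w_X(\al_j)(\ka_k^\vee) = -\al_{\oi_X(j)}(\ka_k^\vee) = -\delta_{j,\oi_X(k)}$ for $j \in X$) gives $v_{ik} = -\al_i(\ka_k^\vee) - \al_i(\ka_{\oi_X(k)}^\vee)$. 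The $\Aut(A)$-equivariance $\tau(\ka_k^\vee) = \ka_{\oi_X(k)}^\vee$ converts $\al_i(\ka_{\oi_X(k)}^\vee)$ into $\al_{\tau(i)}(\ka_k^\vee)$, producing the stated formula. Nonnegativity is standard for finite type: each $\ka_j^\vee$ is a nonnegative rational combination of $\{h_k\}_{k \in X}$, and $\al_i(h_k) = a_{ki} \le 0$ for $i \notin X$ and $k \in X$.

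For (ii), substituting $\Ad(n_X)|_\mfh = w_X$ and $\om|_\mfh = -\id_\mfh$ gives $\theta(X,\tau)|_\mfh = -w_X \circ \tau$. Using $w_X\tau = \tau w_X$ (from $\tau \in \Aut_X(A)$ via \eqref{wX:diagaut}) and $\tau^2 = \id$, a direct expansion reduces the identity to $(w_X-\id)(\tau-\id)=0$. This splits along the $\tau$-stable orthogonal decomposition $\mfh = \mfh_X \oplus \mfh_X^\perp$ (well-defined by nondegeneracy of the form on $\mfh_X$ in finite type): on $\mfh_X^\perp$ one has $\al_j(h) = \eps_j(h,h_j) = 0$ for all $j \in X$, so $w_X$ acts trivially; on $\mfh_X$, the identity \eqref{AdnX:gX} combined with $\om(h_j) = -h_j$ gives $w_X(h_j) = -h_{\oi_X(j)} = -\tau(h_j)$ for $j \in X$, so $w_X|_{\mfh_X} = -\tau|_{\mfh_X}$ and hence $(w_X-\id)(\tau-\id)|_{\mfh_X} = -(\tau+\id)(\tau-\id) = -(\tau^2-\id) = 0$. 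The parallel identity on $\mfh^*$ transfers via $\nu$ (or via a mirror calculation with $\al_j$ in place of $h_j$). The only step calling for genuine care is (i), where one must exploit the stronger hypothesis $\tau|_X = \oi_X$ — not just the $\tau$-stability of $X$ — to rule out disconnected $Z$ outside the ${\sf A}_1 \times {\sf A}_1$ case.
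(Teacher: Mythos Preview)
Your proof is correct. Parts (i) and (iii) follow essentially the same route as the paper; the only cosmetic difference is that the paper takes nonnegativity of the $v_{ij}$ as immediate (from $w_X(\al_i) \in \Phi^+$), whereas you derive it from nonnegativity of the inverse Cartan matrix, which is equally valid.

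For (ii) your argument differs in flavour from the paper's. You reduce to $(w_X-\id)(\tau-\id)=0$ and then split along the $\tau$-stable orthogonal decomposition $\mfh = \mfh_X \oplus \mfh_X^\perp$, computing explicitly on each piece. The paper instead observes directly that the image of $(\theta(X,\tau)-\id)\circ(\tau-\id)$ lies in $\mfh_X$ (because $w_X-\id$ maps into $\mfh_X$) and simultaneously in $\mfh^{-\theta(X,\tau)}$ (because $\theta(X,\tau)|_\mfh$ is an involution, so $\theta-\id$ maps into the $(-1)$-eigenspace); since $\theta(X,\tau)|_{\mfh_X}=\id$, this intersection is zero. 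The paper's argument is shorter and more conceptual, avoiding the need to introduce $\mfh_X^\perp$ or to compute $w_X(h_j)$ explicitly; your argument is more hands-on and makes the mechanism visible at the level of generators, which has its own pedagogical value.
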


\begin{proof} \mbox{}
\begin{enumerate}[leftmargin=!,font={\itshape},labelwidth=\widthof{\emph{(iii)}}]
\item[{\ref{lem:Xtau:basics:connected}}]
Suppose $i \notin X^\perp$ and let $j \in X$ such that $a_{ij} \ne 0$.
Let $X'$ be the connected component of $X$ containing $j$, so that $i \notin (X')^\perp$.
Then $\tau|_{X'} = \oi_X|_{X'} = \oi_{X'}$.
Taking the union of all such $X'$, we obtain the first statement.
Furthermore, from $a_{\tau(i)\tau(j)} = a_{ij}$ it follows that $\tau(i) \notin (X')^\perp$ so that $X' \cup \{ i,\tau(i)\}$ is connected.
Now suppose $i \in X^\perp$.
By the above analysis, also $\tau(i) \in X^\perp$.
Consider the full Dynkin subdiagram whose set of vertices is $Z = \{ i,\tau(i)\}$.
If $Z$ is not connected, it must be of type ${\sf A}_1 \times {\sf A}_1$.
\item[{\ref{lem:Xtau:basics:theta:identity}}] 
The linear map $(\theta(X,\tau)-\id) \circ (\tau-\id) = (w_X-\id) \circ (\tau-\id)$ maps into $\mfh_X$; on the other hand its image lies in $\mfh^{-\theta(X,\tau)}$. 
Since $\theta(X,\tau)|_{\mfh_X} = \id_{\mfh_X}$ it follows that $(\theta(X,\tau)-\id) \circ (\tau-\id) =0$.
A similar argument applies to the action on $\mfh^*$.
\item[{\ref{lem:Xtau:basics:wX:formula}}]
It is clear that the first equality in \eqref{wX:formula} holds for some nonnegative integers $v_{ij}$. 
Now apply that equality to the fundamental coweight $\ka^\vee_j$ for $j \in X$.
It yields $v_{ij} = (w_X(\al_i))(\ka^\vee_j) - \al_i(\ka^\vee_j)$.
Since $(X,\tau)$ is a compatible decoration, we readily obtain $(w_X(\al_i))(\ka^\vee_j) = \al_i(w_X(\ka^\vee_j)) = -\al_i(\tau(\ka^\vee_j)) = -\al_{\tau(i)}(\ka^\vee_j)$ and we arrive at the desired expression for $v_{ij}$ in terms of $\ka^\vee_j$. \qedhere
\end{enumerate}
\end{proof}

Satake diagrams (of Kac-Moody type) are particular types of compatible decorations, associated to almost split real forms of Kac-Moody algebras or equivalently to involutive automorphisms of $\mfg$ of the second kind (see \cite[Def. 4.10 (b), Cor. 4.10.4]{BBBR95} and \cite[Def.\ 2.3]{Ko14}).
We can restate this definition as follows.

\begin{defn} \label{def:Sat}
We call $i \in I \backslash X$ an \emph{odd node} if $\tau(i)=i$ and $\ze_X(\al_i)=-1$.
A compatible decoration is called a \emph{Satake diagram} if there are no odd nodes.\hfill \defnend
\end{defn}


\subsection{Classification of pseudo-involutions of the second kind} \label{sec:pseudoinv2nd:class}

Now let $\theta \in \Aut(\mfg)$ be an arbitrary pseudo-involution of the second kind and let $\mft$ be a $\theta$-stable Cartan subalgebra of $\mfg$ such that $\theta|_\mft$ is an involution.
By \cite[Thm.\ 2(a)]{PK83}, $\mfh$ is $\Ad(G)$-conjugate to $\mft$.
We can now apply Proposition \ref{prop:thetafactorization} and hence deduce the following main result of this section.

\begin{thrm} \label{thm:pseudoinv:class}
Any pseudo-involution of the second kind is $\Inn(\mfg)$-conjugate to $\theta(X,\tau,\chi)$ for some enriched compatible decoration $(X,\tau,\chi)$.
\end{thrm}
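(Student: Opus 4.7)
The plan is to reduce the general case to Proposition \ref{prop:thetafactorization}, which already handles the situation where the stable Cartan subalgebra is the standard one $\mfh$. Let $\theta \in \Aut(\mfg)$ be a pseudo-involution of the second kind, and let $\mft$ be a $\theta$-stable Cartan subalgebra such that $\theta|_\mft$ is an involution. First I would apply the Kac-Peterson Cartan conjugacy result \cite[Thm.\ 2(a)]{PK83} cited in the excerpt: there exists $g \in G$ with $\Ad(g)(\mft) = \mfh$. Setting $\theta' := \Ad(g) \circ \theta \circ \Ad(g)^{-1}$, we obtain an automorphism which stabilizes $\mfh$, since $\theta'(\mfh) = \Ad(g)(\theta(\mft)) = \Ad(g)(\mft) = \mfh$, and whose restriction to $\mfh$ is involutive, since $\Ad(g)|_\mft$ intertwines $\theta|_\mft$ with $\theta'|_\mfh$.

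Next I would check that $\theta'$ remains of the second kind, which is immediate: the condition defining the second kind (that $\theta(\mfb^+)$ is $\Ad(G)$-conjugate to $\mfb^-$, or equivalently that $\theta(\mfb^+) \cap \mfb^+$ is finite-dimensional, see \cite[4.6]{KW92} as cited) is manifestly stable under $\Ad(G)$-conjugation of $\theta$. Hence $\theta' \in \Aut(\mfg,\mfh)$ is a pseudo-involution of the second kind in the sense required by Proposition \ref{prop:thetafactorization}, and that proposition yields an enriched compatible decoration $(X,\tau,\chi)$ together with a conjugacy $\theta' \sim_{\Inn(\mfg,\mfh)} \theta(X,\tau,\chi)$.

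Finally, since $\Ad(G) \subseteq \Ad(\wt H \ltimes G) \subseteq \Inn(\mfg)$ and $\Inn(\mfg,\mfh) \subseteq \Inn(\mfg)$ by \eqref{Autgh:decomposition}, composing the two conjugations gives $\theta \sim_{\Inn(\mfg)} \theta(X,\tau,\chi)$, which is the desired conclusion.

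There is no serious obstacle here; the real substance has already been packaged into Proposition \ref{prop:thetafactorization} (which in turn relies on the decomposition \eqref{Autgh:decomposition}, Springer's structural result \cite[Prop.\ 3.3]{Sp85} on twisted involutions in Coxeter groups, the identity \eqref{nw:inverse} for $\Ad(n(w^{-1}))$, and the $\wt H$-conjugacy \eqref{tildeH:conjugacy}) and into the Kac-Peterson Cartan conjugacy theorem. The only point requiring any care is to keep track of where the conjugating elements live so that the final equivalence takes place inside $\Inn(\mfg)$ rather than merely inside $\Aut(\mfg)$, and this is transparent from the inclusions above.
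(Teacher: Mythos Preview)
Your proof is correct and follows exactly the same approach as the paper: conjugate the arbitrary stable Cartan subalgebra to the standard one $\mfh$ via \cite[Thm.\ 2(a)]{PK83}, then invoke Proposition \ref{prop:thetafactorization}. You have simply spelled out the verifications (that $\theta'$ is still of the second kind and that all conjugating elements lie in $\Inn(\mfg)$) that the paper leaves implicit.
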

Theorem \ref{thm:pseudoinv:class} is a special case of \cite[4.39]{KW92} with stronger constraints on the datum $(X,\tau)$.
It can be restated as follows: the assignment $(X,\tau,\chi) \mapsto \theta(X,\tau,\chi)$ induces a surjection:
\eq{ 
\{ \text{enriched compatible decorations} \} \to \{ \text{pseudo-involutions of the 2nd kind} \} / \Inn(\mfg).
}
We can compare Theorem \ref{thm:pseudoinv:class} with \cite[Prop.\ A.6]{Ko14}, the analogous statement about involutions of the second kind in terms of Satake diagrams $(X,\tau)$.
In that case without loss of generality we may let $\chi$ take values among fourth roots of unity, see \cite[(2.7)]{Ko14}, in order to guarantee the involutiveness of $\theta(X,\tau,\chi)$. 

Note that $\Aut(A)$ acts on enriched compatible decorations via
\eq{ \label{AutA:actionCDec}
\psi \cdot (X,\tau,\chi) = (\psi(X),\psi \circ \tau \circ \psi^{-1},\psi \ast \chi), \qq \psi \in \Aut(A).
}
Also, $\om$ commutes with $\theta(X,\tau)$ for all compatible decorations $(X,\tau)$ and $\om \ast \chi = \chi^{-1} \in \wt H^{\theta(X,\tau)}$ for all $\chi \in \wt H^{\theta(X,\tau)}$.
Finally, note that all elements of $\Out(A)$ act by conjugation on the set of pseudo-involutions of the second kind.
Hence we obtain from Theorem \ref{thm:pseudoinv:class} the following result.

\begin{crl} \label{cor:pseudoinv:diagaut:class}
The assignment $(X,\tau,\chi) \mapsto \theta(X,\tau,\chi)$ induces a surjection:
\[
\ol{\theta}: \{ \text{enriched compatible decorations} \} / \Aut(A) \to \{ \text{pseudo-involutions of the 2nd kind} \} / \Aut(\mfg).
\]
\end{crl}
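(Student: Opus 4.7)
The proof plan has two parts: show that the assignment $(X,\tau,\chi)\mapsto\theta(X,\tau,\chi)$ descends to a well-defined map on $\Aut(A)$-orbits, and show that the resulting map is surjective. Surjectivity is immediate from Theorem~\ref{thm:pseudoinv:class}: every pseudo-involution of the second kind is $\Inn(\mfg)$-conjugate, hence $\Aut(\mfg)$-conjugate, to some $\theta(X,\tau,\chi)$.

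For well-definedness, I would prove the stronger identity
\[
\psi\circ\theta(X,\tau,\chi)\circ\psi^{-1} \;=\; \theta\bigl(\psi\cdot(X,\tau,\chi)\bigr) \qquad\text{for all } \psi\in\Aut(A),
\]
where the action on the right is the one given in \eqref{AutA:actionCDec}. Since $\Aut(A)\subseteq\Aut(\mfg)$, this identity directly gives the required $\Aut(\mfg)$-conjugacy of the two automorphisms. To prove it, I would unwind the factorization $\theta(X,\tau,\chi)=\Ad(\chi)\circ\Ad(n_X)\circ\om\circ\tau$ and commute $\psi$ past each of the four factors, using: (i) $\psi$ commutes with $\om$, since $\om$ centralizes $\Aut(A)$ in $\Aut(\mfg)$ (Section~\ref{sec:Autg}); (ii) the action of $\Aut(A)$ on $N$ by relabelling gives $\psi\circ\Ad(n_X)\circ\psi^{-1}=\Ad(n_{\psi(X)})$, because $n\colon W\to N$ is $\Aut(A)$-equivariant and $\psi(w_X)=w_{\psi(X)}$ in $W$; (iii) the very definition of the $\Aut(A)$-action on $\wt H$ gives $\psi\circ\Ad(\chi)\circ\psi^{-1}=\Ad(\psi\ast\chi)$.

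The only point that requires separate verification is that $\psi\cdot(X,\tau,\chi)$ is again an enriched compatible decoration, so that the right-hand side of the displayed identity is even defined. That $(\psi(X),\psi\circ\tau\circ\psi^{-1})$ is a compatible decoration in the sense of Definition~\ref{def:cdec} is routine: $\psi$ preserves the property of being of finite type, conjugates $\oi_X$ to $\oi_{\psi(X)}$, and normalizes the set of involutions of $\Aut(A)$. The condition $\psi\ast\chi\in\wt H^{\theta(\psi(X),\psi\tau\psi^{-1})}$ then follows by transporting the condition $\chi\in\wt H^{\theta(X,\tau)}$ across items (i)--(iii). I expect no genuine obstacle here: the corollary amounts to a routine unpacking of the compatibility between $\Aut(A)$-conjugacy of combinatorial data and $\Aut(\mfg)$-conjugacy of the associated automorphisms, with Theorem~\ref{thm:pseudoinv:class} having already done the substantive work.
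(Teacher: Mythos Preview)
Your proposal is correct and follows essentially the same route as the paper, which compresses the argument into the paragraph preceding the corollary: the $\Aut(A)$-action \eqref{AutA:actionCDec} on enriched compatible decorations (implicitly carrying the equivariance identity you spell out in (i)--(iii)) gives well-definedness, and Theorem~\ref{thm:pseudoinv:class} gives surjectivity. The paper additionally records the compatibility with $\om$-conjugation ($\om\ast\chi=\chi^{-1}$) and notes that $\Out(A)$-conjugation preserves the set of pseudo-involutions of the second kind, but these observations are ancillary to the corollary as stated rather than part of its proof.
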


In the same way, \cite[Prop.\ A.6]{Ko14} can be promoted to the existence of a surjection from the set of $\Aut(A)$-orbits of Satake diagrams to the set of $\Aut(\mfg)$-conjugacy classes of involutions of the second kind.
Owing to results in \cite[Ch.\ 5]{KW92} about involutive automorphisms of $\mfg$, this surjection is in fact a bijection for any $A$, see \cite[Thm.\ 2.7]{Ko14}.

However, the map $\ol{\theta}$ in Corollary \ref{cor:pseudoinv:diagaut:class} is not always injective.
Indeed, if $A$ is such that there exists a compatible diagram $(X,\tau)$ with at least one odd node, then there always exists a choice of $\chi$ such that $\theta(X,\tau,\chi)$ is an involution. 
For instance, fix a square root $\sqrt{-1} \in \F$ of $-1$ and let $\chi \in \wt H$ be defined by $\chi(\al_i) = (\sqrt{-1})^{2\al_i(\rho^\vee_X)}$ for all $i \in I$.
It can be checked using the formula \eqref{wX:formula} and the relation $\rho^\vee_X = \sum_{j \in X} \ka^\vee_j$ that $\chi \in \wt H^{\theta(X,\tau)}$.
Moreover, we have $\theta(X,\tau,\chi)^2 = \Ad(\chi^2 \cdot \ze_X) = \id_\mfg$, as claimed.
By \cite[Thm.\ 2.7]{Ko14}, $\theta(X,\tau,\chi)$ is $\Aut(\mfg)$-conjugate with some $\theta(X',\tau',\chi')$ where $(X',\tau')$ is a Satake diagram and $\chi'$ is the character $s(X,\tau)$ defined in \cite[(2.7)]{Ko14}.
Since the $\Aut(A)$-action on the set of compatible decorations preserves the set of Satake diagrams, injectivity does not hold.

Conversely, if $\chi \in \wt H$ is such that $\theta(X,\tau,\chi)^2=\id_\mfg$ and $i$ is an odd node then $\chi(\al_i-w_X(\al_i))=-1$, so $\chi(\al_j) \ne 1$ for some $j \in X$.
Hence, any involution of the form $(X,\tau,\chi)$ where $(X,\tau)$ is not a Satake diagram does not fix pointwise $\mfg_X$.
It is therefore natural to focus on pseudo-involutions of the second kind which fix pointwise all stable root spaces.
Straightforwardly one obtains the following refinement of Corollary \ref{cor:pseudoinv:diagaut:class}.

\begin{crl} \label{crl:pseudoinv:class:2}
Any pseudo-involution of the second kind which fixes pointwise all stable root spaces is $\Inn(\mfg)$-conjugate to $\theta(X,\tau,\chi)$ for some enriched compatible decoration $(X,\tau,\chi)$ with $\chi|_{Q_X}=1$.
\end{crl}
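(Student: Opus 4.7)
The plan is to start from Theorem \ref{thm:pseudoinv:class}, which produces an enriched compatible decoration $(X,\tau,\chi)$ with $\theta \sim_{\Inn(\mfg)} \theta(X,\tau,\chi)$, and then read off the constraint $\chi|_{Q_X}=1$ by inspecting the action of $\theta(X,\tau,\chi)$ on the root spaces $\mfg_\alpha$ with $\alpha \in \Phi_X$.

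First I would check that the hypothesis of pointwise fixing of every stable root space transports through the reduction to standard form. Using \cite[Thm.\ 2(a)]{PK83} we may conjugate $\theta$ into an element $\theta' \in \Aut(\mfg,\mfh)$ for which $\mfh$ is itself a stable Cartan subalgebra on which $\theta'$ acts involutively, and which fixes pointwise every $\theta'$-stable $\mfh$-root space. Proposition \ref{prop:thetafactorization} then supplies an element $k \in \Inn(\mfg,\mfh)$ with $\theta(X,\tau,\chi) = k \circ \theta' \circ k^{-1}$. Since $k$ stabilizes $\mfh$, it permutes the $\mfh$-root spaces; consequently, if $\mfg_\alpha$ is $\theta'$-stable and fixed pointwise, then $k(\mfg_\alpha) = \mfg_\beta$ is $\theta(X,\tau,\chi)$-stable and fixed pointwise. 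Thus $\theta(X,\tau,\chi)$ itself fixes pointwise every $\mfh$-stable root space.

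Next I would apply this to $\alpha \in \Phi_X$. Since $\tau|_X = \oi_X$, the defining identity $w_X(\alpha_i) = -\alpha_{\oi_X(i)}$ yields $w_X(\alpha) = -\oi_X(\alpha) = -\tau(\alpha)$ for all $\alpha \in Q_X$, and hence
\[
\theta(X,\tau)^*(\alpha) = -\tau(w_X(\alpha)) = \tau(\tau(\alpha)) = \alpha, \qquad \alpha \in \Phi_X.
\]
Thus $\mfg_\alpha$ is $\theta(X,\tau,\chi)$-stable. The identity $\theta(X,\tau)|_{\mfg_X} = \id_{\mfg_X}$ noted immediately after \eqref{theta:def} then gives $\theta(X,\tau,\chi)|_{\mfg_\alpha} = \chi(\alpha)\cdot\id_{\mfg_\alpha}$, and the pointwise-fixing property established in the previous paragraph forces $\chi(\alpha)=1$ for all $\alpha \in \Phi_X$. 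Since $\Pi_X \subseteq \Phi_X$ spans $Q_X$ over $\Z$, we obtain $\chi|_{Q_X}=1$, as required.

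The only step requiring genuine care is the first: verifying that the pointwise-fixing condition survives $\Inn(\mfg,\mfh)$-conjugation. Everything else reduces to the explicit description $\theta(X,\tau,\chi) = \Ad(\chi) \circ \theta(X,\tau)$ combined with the triviality of $\theta(X,\tau)$ on $\mfg_X$.
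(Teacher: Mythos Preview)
Your proof is correct and follows the same line of reasoning the paper intends (the paper itself simply says ``Straightforwardly one obtains\ldots'' without spelling out the argument). You have correctly identified the key point: once one is in the standard form $\theta(X,\tau,\chi)$, the root spaces $\mfg_\alpha$ with $\alpha \in \Phi_X$ are precisely the stable ones on which the action is multiplication by $\chi(\alpha)$, and you have also carefully verified that the pointwise-fixing hypothesis is preserved under the $\Inn(\mfg)$-conjugations used in Theorem~\ref{thm:pseudoinv:class}.
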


The set of enriched compatible decorations $(X,\tau,\chi)$ such that $\chi|_{Q_X}=1$ is $\Aut(A)$-stable.
We conjecture that only allowing maps which fix pointwise all stable root spaces resolves the failure of the surjection $\ol{\theta}$ in Corollary \ref{cor:pseudoinv:diagaut:class} to be a bijection.

\begin{conj} \label{conj:pseudoinv:class:2}
The assignment $(X,\tau,\chi) \mapsto \theta(X,\tau,\chi)$ induces a bijection $\ol{\theta}_\mathsf{fix}$ from 
\[
\{ \text{enriched compatible decorations } (X,\tau,\chi) \text{ such that } \chi|_{Q_X}=1 \} / \Aut(A) \]
to
\[
 \{ \text{pseudo-involutions of the 2nd kind fixing pointwise all stable root spaces} \} / \Aut(\mfg). 
\]
\end{conj}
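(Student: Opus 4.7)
The plan is to combine Corollary~\ref{crl:pseudoinv:class:2} (which gives surjectivity) with a careful $\Aut(\mfg,\mfh)$-level analysis for injectivity. Surjectivity is immediate: any pseudo-involution of the second kind fixing pointwise all stable root spaces is $\Inn(\mfg)$-conjugate, hence $\Aut(\mfg)$-conjugate, to some $\theta(X,\tau,\chi)$ with $\chi|_{Q_X}=1$, so the assignment $(X,\tau,\chi) \mapsto \theta(X,\tau,\chi)$ is surjective on $\Aut(\mfg)$-orbits.

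For injectivity, suppose $\psi\circ\theta(X,\tau,\chi)\circ\psi^{-1}=\theta(X',\tau',\chi')$ for some $\psi\in\Aut(\mfg)$, with both enriched decorations in the constrained set. The first step is to apply the Peterson--Kac conjugacy of Cartan subalgebras \cite[Thm.~2(a)]{PK83} to assume $\psi\in\Aut(\mfg,\mfh)$, then to factor $\psi=\pi\circ\phi$ via \eqref{Autgh:decomposition} with $\pi\in\Out(A)$ and $\phi\in\Inn(\mfg,\mfh)$. The $\Aut(A)$-component of $\pi$ is absorbed into the $\Aut(A)$-orbit of $(X',\tau',\chi')$ via \eqref{AutA:actionCDec}. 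Any remaining $\om$-component is handled by observing that $\om\circ\theta(X,\tau,\chi)\circ\om^{-1}=\theta(X,\tau,\chi^{-1})$, and that the constraints $\chi|_{Q_X}=1$ and $\chi\in\wt H^{\theta(X,\tau)}$, combined with \eqref{wX:formula}, force $\chi(\al_{\tau(i)})=\chi(\al_i)^{-1}$ for all $i\in I$; thus $\tau\ast\chi=\chi^{-1}$ and $\tau\cdot(X,\tau,\chi)=(X,\tau,\chi^{-1})$ already realizes the $\om$-conjugation within the $\Aut(A)$-orbit.

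The main technical work lies in analyzing conjugation by $\phi\in\Inn(\mfg,\mfh)$. Writing $\phi=\si_f\circ\Ad(\eta)\circ\Ad(n(v))$ with $f:\mfh''\to\mfc$, $\eta\in\wt H$, $v\in W$: the transvection $\si_f$ acts trivially on $\mfg'$, so it is irrelevant for the decoration data and can be set aside. Restricting the conjugation identity to $\mfh^*$ yields $-v\,w_X\,\tau(v)^{-1}\,\tau=-w_{X'}\,\tau'$; the uniqueness of decompositions in $W\rtimes\Aut(A)$ then gives $\tau=\tau'$ and $w_{X'}=v\,w_X\,\tau(v)^{-1}$. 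By Springer's result \cite[Prop.~3.3]{Sp85}, combined with Richardson--Springer theory for twisted involutions, $X$ and $X'$ must be $W$-conjugate as decorations; one absorbs the conjugating element into $\Aut(A)$ to reduce to $X=X'$.

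The final and hardest step is to match the characters. A direct computation combining $\Ad(\eta)$-conjugation with the cocycle \eqref{nw:inverse} governing $\Ad(n(v))$-conjugation yields an explicit relation expressing $\chi'$ as $\chi$ multiplied by a correction character built from $\eta$, $v$ and $\ze(v)$. The main obstacle is to show that the combined constraints $\chi|_{Q_X}=\chi'|_{Q_X}=1$, $\chi,\chi'\in\wt H^{\theta(X,\tau)}$, and the specific form of the correction (notably that it is forced to be $2$-torsion, satisfying $\theta(X,\tau)\ast(\text{shift})=(\text{shift})^{-1}$) together force the correction to lie in the image of the $\Aut(A)$-stabilizer of $(X,\tau)$ acting on $\wt H^{\theta(X,\tau)}$; equivalently, $(X,\tau,\chi)$ and $(X,\tau,\chi')$ must lie in the same $\Aut(A)$-orbit. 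This is essentially a cohomological calculation for $\langle w_X\tau\rangle$ acting on $\wt H$, rigidified by the vanishing on $Q_X$. The extra rigidity provided by the fixed-root-spaces condition, absent from Corollary~\ref{cor:pseudoinv:diagaut:class}, is precisely the ingredient the conjecture asserts is sufficient to collapse the remaining inner-gauge freedom.
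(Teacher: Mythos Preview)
This statement is a \emph{conjecture} in the paper, not a theorem; the paper does not prove it. After stating the conjecture, the authors explain that surjectivity follows from Corollary~\ref{crl:pseudoinv:class:2} and that injectivity is the issue. They outline a reduction (via the decomposition $\Aut(\mfg) = \Out(A) \ltimes \Inn(\mfg)$, passing to $\Aut_{\rm I}(\mfg')$ and extracting the $\Aut(A)$-component) which yields $\tau \circ \psi = \psi \circ \tau'$, but explicitly flag the obstacle: the key tool for the involutive case, \cite[Thm.~5.31]{KW92} (giving $\mfg = \mfb^+ + \mfg^\theta$), fails for non-involutive pseudo-involutions. They then provide \emph{evidence} by proving the conjecture only for $\mfg = \mfsl_3$, via an explicit case-by-case computation in the $3$-dimensional representation.

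Your proposal therefore attempts much more than the paper achieves, and it has a genuine gap at the very first reduction. You invoke \cite[Thm.~2(a)]{PK83} to assume the conjugating automorphism $\psi$ lies in $\Aut(\mfg,\mfh)$. But Peterson--Kac only gives $g \in G$ with $\Ad(g)\psi(\mfh) = \mfh$; replacing $\psi$ by $\Ad(g)\psi$ does put you in $\Aut(\mfg,\mfh)$, but the conjugate of $\theta$ is now $\Ad(g)\,\theta(X',\tau',\chi')\,\Ad(g)^{-1}$, which need not be of the form $\theta(X'',\tau'',\chi'')$. To repair this you would need to know that two $\theta'$-stable Cartan subalgebras are conjugate under the centralizer of $\theta'$ in $\Aut(\mfg)$ (equivalently, that $\Aut(\mfg)$-conjugacy of $\mfh$-stabilizing pseudo-involutions implies $\Aut(\mfg,\mfh)$-conjugacy). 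For genuine involutions this follows from the \cite{KW92} machinery; for pseudo-involutions it is precisely the missing ingredient the paper points to, and assuming it is essentially assuming the conjecture.

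The later steps are also incomplete. From $w_{X'} = v\,w_X\,\tau(v)^{-1}$ you assert that Richardson--Springer theory forces $X$ and $X'$ to be related and that one can ``absorb the conjugating element into $\Aut(A)$''; but $v \in W$ is not a diagram automorphism, and twisted-conjugacy of $w_X$ and $w_{X'}$ does not by itself yield an $\Aut(A)$-relation between $(X,\tau)$ and $(X',\tau)$. Finally, the character-matching paragraph describes the \emph{shape} of the desired cohomological statement without proving it; the claim that the correction character must lie in the $\Aut(A)$-stabilizer orbit is exactly the content of the conjecture at that stage, not a consequence of the preceding argument.
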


Note that if $\psi \in \Aut(A)$ and $(X,\tau,\chi)$ is a compatible decoration such that $\chi|_{Q_X}=1$ then $\theta(\psi \cdot (X,\tau,\chi)) = \psi \circ \theta(X,\tau,\chi) \circ \psi^{-1}$.
Considering Corollary \ref{crl:pseudoinv:class:2} we see that in order to prove Conjecture \ref{conj:pseudoinv:class:2}, it suffices to establish the injectivity of $\ol{\theta}_\mathsf{fix}$.
Note that pseudo-involutions $\theta$ of the second kind which fix pointwise stable root spaces automatically have a split pair, see \cite[5.15]{KW92}, so that a proof along the lines of the last part of \cite[App.\ A]{Ko14} may be possible.
On the other hand, the crucial result \cite[Thm.\ 5.31]{KW92} does not directly extend to pseudo-involutions: it is no longer true that $\mfg = \mfb^+ + \mfg^\theta$ if $\theta = \theta(X,\tau,\chi)$ is not an involution.
Nevertheless, it is useful to follow this approach in first instance. 

Consider a pair of enriched compatible decorations $(X,\tau,\chi)$ and $(X',\tau',\chi')$ such that $\chi|_{Q_X} = \chi'|_{Q_{X'}} = 1$ and
\eq{ \label{Autg:conjugacy:pseudo}
\theta \circ \phi = \phi \circ \theta'
}
for some $\phi \in \Aut(\mfg)$, where $\theta:=\theta(X,\tau,\chi)$ and $\theta':=\theta(X',\tau',\chi')$.
We wish to show that there exists $\psi \in \Aut(A)$ such that 
\eq{ \label{cdec:conjugacy:pseudo}
\psi \cdot (X',\tau',\chi') = (X,\tau,\chi).
}
Using the decomposition \eqref{Autg:decomposition} and observing that $\theta$ preserves $\mfg'$ and satisfies $\theta \circ \si_f = \si_{\tau \circ f \circ \tau^{-1}} \circ \theta$ for all linear $f: \mfh'' \to \mfc$, we may assume that $\phi \in \Aut(\mfg') = \Out(A) \ltimes \Ad(\wt H \ltimes G)$. 
Furthermore, by replacing $\phi$ by $\theta^{-1} \circ \phi$ if necessary, we may assume that $\phi$ is of the first kind, i.e.\ $\phi$ lies in 
\eq{
\Aut_{\rm I}(\mfg') := \Aut(A) \ltimes \Ad(\wt H \ltimes G).
}
Denoting the factor of $\phi$ in $\Aut(A)$ by $\psi$, from \eqref{Autg:decomposition} we deduce 
\eq{ \label{Autg:conjugacy:pseudo:diagaut}
\tau \circ \psi = \psi \circ \tau'
} 
as desired.
It would now be sufficient to prove that $\psi(X')=X$, $\psi \ast \chi' = \chi$ and $\phi \circ \psi^{-1} \in \Ad(\wt H \ltimes G)$ commutes with $\theta$.\\

We make one more general remark before proving the conjecture in a special case.
Consider an enriched compatible decoration $(X,\tau,\chi)$ such that $\chi|_{Q_X}=1$.
Fix $i \in I \backslash X$.
The conditions on $\chi$ imply that $\chi(\al_i+\al_{\tau(i)})=1$.
Suppose in addition (if $\tau(i)=i$ this is automatic) that $\chi(\al_i)=\chi(\al_{\tau(i)})$, necessarily equal to $\pm 1$.
Then there exists $\chi_+ \in \wt H^{\theta(X,\tau)}$ such that 
\eq{ \label{Autg:conjugacy:pseudo:signs}
\theta(X,\tau,\chi) \sim_{\Ad(\wt H)} \theta(X,\tau,\chi_+), \qq \chi_+(\al_i)=\chi_+(\al_{\tau(i)})=1.
}
Indeed if $\chi(\al_i) = \chi(\al_{\tau(i)}) = -1$, one can conjugate by $\Ad(\xi)$ where $\xi \in \wt H$ satisfies $\xi(\al_i)=\xi(\al_{\tau(i)}) = \sqrt{-1}$ and $\xi(\al_j)=1$ if $j \ne \{ i,\tau(i) \}$.\\

To provide evidence for the conjecture, we now give an elementary proof in the crucial case $\mfg = \mfsl_3$ (the smallest indecomposable Kac-Moody algebra for which there exist non-involutive pseudo-involutions).
Similar proofs, all based on the fact that conjugation of an automorphism preserves its order and the dimension of its eigenspaces, can straightforwardly be given for other low-rank cases.

Let $I=\{1,2\}$ and $A=(a_{ij})_{i,j \in I}$ be such that $a_{11}=a_{22}=2$ and $a_{12}=a_{21}=-1$.
In addition to the Satake diagrams $(\emptyset,\id)$, $(\emptyset,\oi_I)$, $(I,\oi_I)$, where $\oi_I$ is the nontrivial diagram automorphism, we have the compatible decorations $(\{1\},\id)$ and $\oi_I \cdot (\{1\},\id) = (\{2\},\id)$. 
Consider two enriched compatible decorations $(X,\tau,\chi)$ and $(X',\tau',\chi')$ such that $\chi|_{Q_X} = \chi'|_{Q_{X'}}=1$.
Suppose that $\theta(X,\tau,\chi) \sim_{\Aut_{\rm I}(\mfg')} \theta(X',\tau',\chi')$.
We must show that $(X,\tau,\chi)$ and $(X',\tau',\chi')$ lie in the same $\Aut(A)$-orbit.
By \eqref{Autg:conjugacy:pseudo:diagaut}, we only need to show $(X,\chi)=(X',\chi')$ or $(X,\chi) = \oi_I \cdot (X',\chi')$. 

\begin{description}
\item[$\tau=\tau'=\id$]
By \eqref{Autg:conjugacy:pseudo:signs} we may assume $\chi=\chi'=1$.
Hence it suffices to study the conjugacy problem 
\eq{
\theta(X,\id) \sim_{\Aut_{\rm I}(\mfg')} \theta(X',\id).
}
Recall that $\theta(X,\tau)^2 = \Ad(\ze_X)$ and note that $X$ is either empty or a singleton.
Hence $\theta(X,\id)$ is an involution if and only if $X=\emptyset$.
As a consequence, either $X=\emptyset=X'$, and we are done, or $X \ne \emptyset \ne X'$.
In the second case both $X$ and $X'$ are singletons and either $X=X'$, in which case $(X,\tau,\chi) = (X',\tau',\chi')$, or $X = \oi_I(X')$, in which case $(X,\tau,\chi) = \oi_I \cdot (X,\tau',\chi')$, as required.

\item[$\tau=\tau'=\oi_I$]
Note that in this case each of $X$ and $X'$ are equal to $I$ or $\emptyset$, and it is necessary to show that $X=X'$. 
On the one hand, $\theta(I,\oi_I) = \id$ so that $\theta(I,\oi_I,\chi)=\id$ for any $\chi \in \wt H^{\theta(I,\oi_I)}$ such that $\chi|_{Q_I} = 1$ (i.e.\ for $\chi=1$).
On the other hand, $\theta(\emptyset,\oi_I,\chi)$ sends $h_1$ to $-h_2$, and hence is not the identity, for any $\chi \in \wt H$.
Therefore $\theta(X,\oi_I,\chi) \sim_{\Aut_{\rm I}(\mfg')} \theta(X',\oi_I,\chi')$ implies $X=X'$.
If $X=X'=I$, we automatically have $\chi=\chi'(=1)$, as required.\\

To deal with the case $X=X'=\emptyset$, note that $\theta(\emptyset,\oi_I,\chi)$ is an involution if and only if $\chi^2=1$, which is equivalent to $\chi(\al_1)=\chi(\al_2) \in \{ \pm 1 \}$.
Again by \eqref{Autg:conjugacy:pseudo:signs}, we may assume that both $\chi$ and $\chi'$ are equal to 1; we obtain $(X,\tau,\chi) = (X',\tau',\chi')$ as required.

It remains to deal with the case that $\chi^2 \ne 1 \ne (\chi')^2$.
It suffices to prove that $\chi(\al_1)=\chi'(\al_1)$.
We observe that $\Ad(x)$ sends $\mfg^{\theta'}$ to $\mfg^\theta$, as a consequence of \eqref{Autg:conjugacy:pseudo}. 
In this case $\mfg^\theta = \mfg^{\theta'}$ is the abelian subalgebra $\F (h_1-h_2) \oplus \F b_{12}$ where $b_{12}:=[f_1,f_2]-[e_1,e_2]$.
Now we identify $\mfsl_3$ with its faithful 3-dimensional representation according to
\eq{
e_1 = \begin{pmatrix} 0 & 1 & 0 \\ 0 & 0 & 0 \\ 0 & 0 & 0 \end{pmatrix}, \qu f_1 = \begin{pmatrix} 0 & 0 & 0 \\ 1 & 0 & 0 \\ 0 & 0 & 0 \end{pmatrix}, \qu e_2 = \begin{pmatrix} 0 & 0 & 0 \\ 0 & 0 & 1 \\ 0 & 0 & 0 \end{pmatrix}, \qu f_2 = \begin{pmatrix} 0 & 0 & 0 \\ 0 & 0 & 0 \\ 0 & 1 & 0 \end{pmatrix}.
}
Hence $x$ is identified with an element of $\GL(3,\F)$ (well-defined up to a scalar multiple).
The condition on $x$ that $\Ad(x)$ preserves $\mfg^\theta$ has two solution classes.
\begin{enumerate}
\item 
In the first class of solutions we have 
\eq{
\begin{gathered}
\Ad(x)(h_1-h_2)=h_1-h_2, \qq \Ad(x)(b_{12})=\eps b_{12}, \\
x=\begin{pmatrix} A & 0 & B \\ 0 & 1 & 0 \\ \eps B & 0 & \eps A \end{pmatrix},
\end{gathered}
}
where $\eps \in \{ \pm 1 \}$ and $A,B \in \F$ are such that $A^2 \ne B^2$.
Now we obtain 
\eq{
\Ad(x)(f_1) = (A^2-B^2)^{-1}(A f_1 - \eps B e_2), \qq \Ad(x)(e_2) = (A^2-B^2)^{-1}(- B f_1 + \eps A e_2).
}
In the case $\psi=\id$, applying \eqref{Autg:conjugacy:pseudo} to $f_1$ gives $\chi(\al_2)=\chi'(\al_2)$, as required.
If $\psi=\oi_I$, applying \eqref{Autg:conjugacy:pseudo} to $f_2$ yields a contradiction, so \eqref{Autg:conjugacy:pseudo} does not hold in this case and there is nothing to prove.
\item
In the second class, we find 
\eq{ 
\begin{gathered}
\Ad(x)(h_1-h_2) = -\frac{1}{2} \big( h_1-h_2+3 \ze b_{12} \big), \qq 
\Ad(x)(b_{12})  = -\frac{\eta}{2} \big( h_1-h_2 - \ze b_{12} \big), \\
x=\begin{pmatrix} 1 & C & \eta \\ -\eta D & 0 & D \\ \ze & -\ze C & \ze \eta \end{pmatrix}
\end{gathered}
}
where $\ze, \eta \in \{\pm 1 \}$ and $C,D \in \F^\times$.
Now we obtain 
\eq{ 
\begin{aligned}
\Ad(x)(f_1) &= \frac{C}{4} \Big( h_1+h_2 + \ze \big( [e_1,e_2]+[f_1,f_2] \big) + 2\eta D^{-1} \big( \ze f_2 - e_1 \big) \Big),
\\
\Ad(x)(e_2) &= \frac{\eta C}{4} \Big( h_1+h_2 + \ze  \big( [e_1,e_2]+[f_1,f_2] \big) - 2 \eta D^{-1} \big( \ze f_2 - e_1 \big) \Big).
\end{aligned}
}
Applying \eqref{Autg:conjugacy:pseudo} to $f_{\psi(1)}$ and considering the projections to $\mfg_{\al_1}$ and $\mfg_{-\al_2}$ results in a contradiction.
Hence \eqref{Autg:conjugacy:pseudo} does not hold and there is nothing to prove.
\end{enumerate}
\end{description}



\section{Pseudo-fixed-point subalgebras in terms of generalized Satake diagrams} \label{sec:pseudofixedpoint}

In this section we discuss constructions of pseudo-fixed-point subalgebras in the Kac-Moody setting, associated to pseudo-involutions of the second kind.
In \cite[Sec.~3]{RV20} we considered a Lie subalgebra $\mfk = \mfk(X,\tau,\chi) \subseteq \mfg$ of finite type which is a pseudo-fixed-point subalgebra if and only if the enriched compatible decoration $(X,\tau,\chi)$ satisfies certain additional conditions. 
Part of this earlier work immediately generalizes to the Kac-Moody setting, see \cite[Rmk.~3 (ii)]{RV20}. 
Here we give a brief synopsis of it for completeness.
This is complemented with a statement of the Iwasawa decomposition for a pseudo-symmetric pair.


\subsection{The subalgebra $\mfk$}

Let $\mft$ be an arbitrary Cartan subalgebra of the Kac-Moody algebra $\mfg$. 
Denote by $\mfg_\al^{(\mft)}$ the $\mft$-root space associated to $\al \in \mft^*$,
\eq{
\mfg_\al^{(\mft)} := \{ x \in \mfg \, : \,\forall t \in \mft \; [t,x] = \al(t) x \}. \label{mft-root-space}
} 
and by $\Phi^{(\mft)}$ the corresponding root system,
\eq{
\Phi^{(\mft)} := \{ \al \in \mft^* \, : \, \mfg_\al^{(\mft)} \ne \{0\} \}  \backslash \{0\}. \label{mft-root-system}
}
Choose a subset $\Pi^{(\mft)} \subset \Phi^{(\mft)}$ such that $\Phi^{(\mft)} = \Sp_{\Z_{\ge 0}} \Pi^{(\mft)} \cup \Sp_{\Z_{\le 0}} \Pi^{(\mft)}$.
Moreover, for every $\al \in \Pi^{(\mft)}$ and every choice of sign, choose nonzero elements $x_{\pm \al} \in \mfg^{(\mft)}_{\pm \al}$ (simple root vectors).
Then $\mfg$ is generated by $\mft$ and $\{x_\al,x_{-\al}\}_{\al \in \Pi^{(\mft)}}$.

\begin{defn} \label{def:pseudofixedpoint:KM}
With the notation defined above, let $\theta \in \Aut(\mfg)$ be a pseudo-involution of the second kind stabilizing $\mft$ such that $\theta|_\mft$ is an involution. 
To this datum we associate a subalgebra $\mfk$, defined to be the subalgebra generated by $\mft^\theta$, all $x_\al$ and $x_{-\al}$ such that $\al \in \Pi^{\theta^*}$, and the elements $x_\al+\theta(x_\al)$ for $\al \in \Pi \backslash \Pi^{\theta^*}$.
\hfill \defnend
\end{defn}

This generating set is natural in view of Definition \ref{def:pseudofixedpoint} and shortly we will prove that under an additional mild assumption on $(X,\tau,\chi)$ the subalgebra $\mfk$ is indeed a pseudo-fixed-point subalgebra.

\begin{rmk}
Definition \ref{def:pseudofixedpoint:KM} also makes sense if $\theta$ is of the first kind, but then $\mfk$ will typically not be a pseudo-fixed-point subalgebra in the sense of Definition \ref{def:pseudofixedpoint}.
This can be easily checked if $\mfg$ is of type $\wh{\sf A}_1$ and $\theta$ is the nontrivial diagram automorphism, cf. \cite[Example 5.11]{KW92}. \hfill \rmkend 
\end{rmk}

If $\theta$ is a pseudo-involution of the second kind then Theorem \ref{thm:pseudoinv:class} implies that by $\Aut(\mfg)$-conjugacy we may assume 
\eq{
\theta = \Ad(\chi) \circ \theta(X,\tau)
}
for some enriched compatible decoration $(X,\tau,\chi)$.
Then the corresponding subalgebra $\mfk$ is generated by $\mfh^\theta$, $\mfn^+_X$ and the elements 
\eq{ \label{bi:def}
b_i := \begin{cases} f_i & \text{if } i \in X, \\ f_i + \theta(f_i) & \text{otherwise},
\end{cases}
}
which recovers the description of a fixed-point subalgebra $\mfg^\theta$ in the case that $\theta$ is involutive, see \cite[Lem.~2.8]{Ko14}. 

\begin{rmk}
Equally, we could have interchanged the role of $\mfn^+$ and $\mfn^-$ in the definition of $\mfk$.
The current choice is customary in recent work on q-deformations of $U\mfk$, see e.g. \cite{Ko14,BK19,DK19,RV20,AV20}. 
\hfill \rmkend
\end{rmk}


\subsection{Generalized Satake diagrams} \label{sec:GSats}

We now consider a particular type of (enriched) compatible decoration; we will see that, if and only if it is associated to such datum, the subalgebra $\mfk = \langle \mfh^\theta, \mfn^+_X, \{ b_i \}_{i \in I} \rangle$ is a pseudo-fixed-point subalgebra.
For all $i \in I \backslash X$ we consider the set 
\eq{
X[i] := X \cup \{ i, \tau(i) \},
}
which we may indentify with the corresponding full subdiagram of $I$.

\begin{defn} \label{def:GSat} \mbox{}
\begin{enumerate}
\item \label{def:GSat:vanilla}
Let $(X,\tau)$ be a compatible decoration.
We call $(X,\tau)$ a \emph{generalized Satake diagram} if for all $i \in I \backslash X$ such that $\tau(i)=i$ the connected component of the subdiagram $X[i]$ containing $i$ is not of type ${\sf A}_2$ (i.e.~$X[i]$ does not have
\tp[baseline=-3pt,line width=.7pt,scale=75/100]{
\draw (.1,0) --  (.5,0);
\filldraw[fill=white] (0,0) circle (.1);
\filldraw[fill=black] (.5,0) circle (.1);
} 
as a connected component).
\item \label{def:GSat:enriched}
Let $(X,\tau,\chi)$ be an enriched compatible decoration.
If $(X,\tau)$ is a generalized Satake diagram and $\chi(\al_{\tau(i)}) = \chi(\al_i)$ for all $i \in X^\perp$ such that $a_{i \tau(i)}=0$ then we call $(X,\tau,\chi)$ an \emph{enriched generalized Satake diagram}. \hfill \defnend
\end{enumerate}
\end{defn}

\begin{rmk} \label{rmk:GSats} \mbox{}
\begin{enumerate}
\item \label{rmk:GSats:Satake}
Recall the notion of a Satake diagram given in Definition \ref{def:Sat}.
If a compatible decoration $(X,\tau)$ is not a generalized Satake diagram, then there exists $(i,j) \in (I \backslash X) \times X$ such that $i,j \in (X\backslash\{j\})^\perp$, $\tau(i)=i$ and $a_{ij}=a_{ji}=-1$.
Hence $\zeta_X(\al_i) = (-1)^{\al_i(h_j)} = -1$, so that $i$ is an odd node.
It follows that all Satake diagrams are generalized Satake diagrams.
\item \label{rmk:GSats:AutA}
The action of $\Aut(A)$ on the set of (enriched) compatible decoration given in \eqref{AutA:actionCDec} stabilizes the set of (enriched) generalized Satake diagrams.
\hfill \rmkend
\end{enumerate}
\end{rmk}

We give the classification of generalized Satake diagrams when $A$ is of finite or affine type in Appendices \ref{sec:tables:finite} and \ref{sec:tables:affine}.
Here we make some remarks to support this classification in the case that $A$ is of classical Lie type. 
Depending on the Lie type, the corresponding generalized Satake diagrams naturally organize themselves into 2, 3 or 4 families. 

To describe these families, it is convenient to say that a connected component of $X$ is \emph{$(X,\tau)$-simple} if it is a singleton $\{j\}$ and there exists $i \in I \backslash X$ such that $a_{ij}a_{ji} = 1$ and $\tau(i)=i$.
Suppose that $\tau$ is trivial except possibly on subsets of type ${\sf A}_1 \times {\sf A}_1$ (these occur when $A$ is of type ${\sf D}_n$, $\wh{\sf B}_n$, $\wh{\sf B}_n^\vee$ or $\wh{\sf D}_n$).
Then the condition that $\tau|_X = \oi_X$ requires that $X$ has at most two connected components which are not $(X,\tau)$-simple, which must be of type ${\sf B}_p$, ${\sf C}_p$ or ${\sf D}_p$ (with $p \le n$).
More precisely, if $A$ is of type ${\sf A}_n$ or $\wh{\sf A}_n$, then all connected components of $X$ are $(X,\tau)$-simple; if $A$ is of type ${\sf B}_n$, ${\sf C}_n$ or ${\sf D}_n$ then $X$ can have at most one connected component which is not $(X,\tau)$-simple; in all other cases $X$ can have two connected components which are not $(X,\tau)$-simple.
The defining condition of generalized Satake diagrams now require that either there are no $(X,\tau)$-simple components, in which case we call $(X,\tau)$ \emph{plain}, or, due to the definining condition of generalized Satake diagram, there are as many as fit in the complement of the other connected components of $X$, in which case we call $(X,\tau)$ \emph{alternating}.

If $A$ is of type $\wh{\sf A}_n$, $\wh{\sf C}_n$, $\wh{\sf C}_n^\vee$ or $\wh{\sf D}_n$ then rotation by a half-turn is a diagram automorphism $\tau$.
To extend $\tau$ to a generalized Satake diagram $(X,\tau)$, in the first case $X$ must be empty and otherwise of type ${\sf A}_p$.
In this case we call $(X,\tau)$ \emph{rotational}.
Finally, if $A$ is of type ${\sf A}_n$ or $\wh{\sf A}_n$ then $\tau$ can be a reflection of the Dynkin diagram, in which case $X$ is of type ${\sf A}_p$ or ${\sf A}_{p_1} \times {\sf A}_{p_2}$, respectively; we call such generalized Satake diagrams \emph{reflecting}.


\subsection{Basic properties of $\mfk$}

In this section we analyse the subalgebra $\mfk$.
In order to do that, for $\bm i = (i_1,\ldots,i_\ell) \in I^\ell$ ($\ell \in \Z_{> 0}$)~we~set
\begin{align}
e_{\bm i} &= [e_{i_1},[e_{i_2},\ldots,[e_{i_{\ell-1}},e_{i_\ell}]\cdots]] \in \mfn^+, \\
f_{\bm i} &= [f_{i_1},[f_{i_2},\ldots,[f_{i_{\ell-1}},f_{i_\ell}]\cdots]] \in \mfn^-, \\
b_{\bm i} &= [b_{i_1},[b_{i_2},\ldots,[b_{i_{\ell-1}},b_{i_\ell}]\cdots]], \\
\al_{\bm i} &= \al_{i_1} + \ldots + \al_{i_\ell} \in Q^+.
\end{align}
Fix a subset $\mc J \subset \cup_{\ell \in \Z_{>0}} I^\ell$ such that $\{ f_{\bm i} \, : \, \bm i \in \mc J \}$ is a basis for $\mfn^-$.
In particular $I \subseteq \mc J$.

Recall the standard order on $Q$, defined for any $\la,\mu \in Q$ by $\la \ge \mu$ if and only if $\la-\mu \in Q^+$.
Finally, for $i,j \in I$ such that $i \ne j$ we set $M_{ij} := 1-a_{ij} \in \Z_{> 0}$ and denote 
\eq{
\la_{ij} := \al_i + M_{ij} \al_j \in Q^+,
}
which is not a root (this statement is equivalent to the Serre relations in $\mfg$).

\begin{lemma} \label{lem:b:triangular}
Let $(X,\tau,\chi)$ be an enriched compatible decoration and let $\theta = \Ad(\chi) \circ \theta(X,\tau)$.
Then 
\eq{
b_{\bm i} - f_{\bm i} \in \bigoplus_{\al \in \Phi \atop \al > -\al_{\bm i}} \mfg_\al
}
for all $\bm i \in I^\ell$, $\ell \in \Z_{>0}$.
Hence, the projection of $b_{\bm i}$ on $\mfg_{-\al_{\bm i}}$ with respect to the root space decomposition \eqref{g:rootspacedecomposition} is $f_{\bm i}$.
\end{lemma}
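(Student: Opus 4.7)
The plan is to proceed by induction on the length $\ell$. For $\ell = 1$ and $i \in X$ we have $b_i = f_i \in \mfg_{-\al_i}$, so the claim is trivial. For $i \in I \backslash X$ we have $\tau(i) \in I \backslash X$, and the discrepancy $b_i - f_i = \theta(f_i)$ can be computed via the factorization $\theta = \Ad(\chi) \circ \Ad(n_X) \circ \om \circ \tau$: since $\tau(f_i) = f_{\tau(i)}$, $\om(f_{\tau(i)}) = -e_{\tau(i)}$, and $\Ad(n_X)$ sends $\mfg_{\al_{\tau(i)}}$ to $\mfg_{w_X(\al_{\tau(i)})}$, one obtains
\[
\theta(f_i) \;=\; -\chi(w_X(\al_{\tau(i)}))\,\Ad(n_X)(e_{\tau(i)}) \;\in\; \mfg_{w_X(\al_{\tau(i)})},
\]
a nonzero element of that root space. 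By Lemma~\ref{lem:Xtau:basics}\ref{lem:Xtau:basics:wX:formula}, the root $w_X(\al_{\tau(i)}) = \al_{\tau(i)} + \sum_{j \in X} v_{\tau(i),j}\, \al_j$ has all $v_{\tau(i),j} \in \Z_{\ge 0}$, so
\[
w_X(\al_{\tau(i)}) + \al_i \;=\; \al_i + \al_{\tau(i)} + \sum_{j \in X} v_{\tau(i),j}\, \al_j \;\in\; Q^+ \backslash \{0\},
\]
where nonvanishing uses $i \notin X$. Hence $w_X(\al_{\tau(i)}) > -\al_i$ in the standard order, settling the base case.

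For the inductive step, decompose $\bm i = (i_1, \bm i')$ with $\bm i' = (i_2, \ldots, i_\ell)$, so $b_{\bm i} = [b_{i_1}, b_{\bm i'}]$. By the inductive hypothesis, $b_{i_1} - f_{i_1} \in \bigoplus_{\al > -\al_{i_1}} \mfg_\al$ and $b_{\bm i'} - f_{\bm i'} \in \bigoplus_{\al > -\al_{\bm i'}} \mfg_\al$. Expanding the bracket bilinearly yields
\[
b_{\bm i} \;=\; [f_{i_1}, f_{\bm i'}] + [f_{i_1}, b_{\bm i'} - f_{\bm i'}] + [b_{i_1} - f_{i_1}, f_{\bm i'}] + [b_{i_1} - f_{i_1}, b_{\bm i'} - f_{\bm i'}].
\]
The first term equals $f_{\bm i} \in \mfg_{-\al_{\bm i}}$, and using $[\mfg_\al, \mfg_\be] \subseteq \mfg_{\al+\be}$ together with $-\al_{\bm i} = -\al_{i_1} - \al_{\bm i'}$, each of the remaining three brackets lies in $\bigoplus_{\ga > -\al_{\bm i}} \mfg_\ga$. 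The projection statement is then immediate, since no summand of $b_{\bm i} - f_{\bm i}$ has a component in $\mfg_{-\al_{\bm i}}$.

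The only genuine content lies in the base case, where one must verify that the positive real root $w_X(\al_{\tau(i)})$ is strictly above $-\al_i$ in the standard order; the formula of Lemma~\ref{lem:Xtau:basics}\ref{lem:Xtau:basics:wX:formula}, a direct consequence of the compatibility of $(X,\tau)$, makes this transparent once one notices that $i \notin X$ prevents $\al_i$ from being absorbed into the $\sum_{j \in X}$ term. The inductive step is then standard derivation-style bookkeeping using the $Q$-grading of $\mfg$, with no further structural input required.
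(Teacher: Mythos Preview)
Your proof is correct and follows precisely the approach the paper indicates: induction on $\ell$ using the explicit formula \eqref{bi:def} for $b_i$. The paper's own proof is a one-line sketch (``A straightforward induction with respect to $\ell$ and the explicit formula \eqref{bi:def}''), and you have supplied exactly the details that sketch calls for, including the use of Lemma~\ref{lem:Xtau:basics}\ref{lem:Xtau:basics:wX:formula} to verify $w_X(\al_{\tau(i)}) > -\al_i$ in the base case.

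One small remark: in the inductive step your brackets may land in $\mfg_0 = \mfh$ (e.g.\ when $i_1 \in X^\perp$, $i_2 = \tau(i_1) \ne i_1$, so that $[f_{i_1},\theta(f_{i_2})]$ is a multiple of $h_{i_1}$), whereas the displayed direct sum in the lemma is indexed by $\al \in \Phi$, which excludes $0$. This is a harmless imprecision in the paper's statement rather than a defect in your argument---your phrasing ``$\bigoplus_{\ga > -\al_{\bm i}} \mfg_\ga$'' (implicitly allowing $\ga = 0$) is the correct one, and the consequence about the projection onto $\mfg_{-\al_{\bm i}}$ is unaffected.
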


\begin{proof}
A straightforward induction with respect to $\ell$ and the explicit formula \eqref{bi:def}.
\end{proof}

The generators $b_i$ satisfy Serre relations which, unlike those for $f_i$, may have additional lower order terms.

\begin{lemma}{\cite[Eq.\ (3.7)]{RV20}} \label{lem:bb:formulas}
Let $(X,\tau,\chi)$ be an enriched compatible decoration.
Let $i,j \in I$ be such that $i \ne j$.
\begin{enumerate}
\item \label{lem:bb:formulas:case1}
Suppose $\theta^*(\al_i)+\al_i+\al_j \in -\Phi^+ \cup \{0\}$.
Then $i \in I \backslash X$, $\tau(i)=i$, $j \in X$, $\theta^*(\al_i)+\al_i+\al_j \in -\Phi^+_X \cup \{0\}$ and
\eq{
\ad(b_i)^{M_{ij}}(b_j) = \begin{cases}
(1+\ze_X(\al_i)) [\theta(f_i),[f_i,f_j]] \in \mfn^+_X & \text{if } \theta^*(\al_i)+\al_i+\al_j <0, \, a_{ij}=-1, \\
-18\chi(\al_i)^{-2}e_j & \text{if } \theta^*(\al_i)+\al_i+\al_j=0, \, a_{ij}=-3, \\
-\chi(\al_i)^{-1}(2h_i+h_j) & \text{if } \theta^*(\al_i)+\al_i+\al_j=0, \, a_{ij}=-1, \\
0 & \text{otherwise}.
\end{cases}
}
\item \label{lem:bb:formulas:case2}
Suppose $\theta^*(\al_i)+\al_j \in -\Phi^+ \cup \{0\}$ and $j \in I \backslash X$.
Then $i \in I \backslash X$, $\tau(i)=j$, $\theta^*(\al_i)+\al_j \in -\Phi^+_X \cup \{0\}$ and
\eq{
\ad(b_i)^{M_{ij}}(b_j) = \begin{cases}
(1+\ze_X(\al_i)\chi(\al_i-\al_j)) [\theta(f_i),f_j] \in \mfn^+_X & \text{if } \theta^*(\al_i)+\al_j <0, \, a_{ij}=0, \\
\chi(\al_j)^{-1}h_i - \chi(\al_i)^{-1}h_j & \text{if } \theta^*(\al_i)+\al_j =0, \, a_{ij}=0, \\
2(\chi(\al_i)^{-1}+\chi(\al_j)^{-1})b_i & \text{if } \theta^*(\al_i)+\al_j = 0, \, a_{ij}=-1, \\
0 & \text{otherwise}.
\end{cases}
}
\item \label{lem:bb:formulas:case3}
Suppose $\theta^*(\al_i)=-\al_i$ and $j \in I \backslash X$.
Then $i \in X^\perp$, $\tau(i)=i$ and
\eq{ \label{bb:Onsager}
\ad(b_i)^{M_{ij}}(b_j) = \sum_{r \in \Z_{>0} \atop 2r \le M_{ij}} p_{ij}^{(r,M_{ij})} \chi(\al_i)^{-r} \ad(b_i)^{M_{ij}-2r}(b_j)
}
where $p_{ij}^{(r,m)}$ for $m,r \in \Z_{\ge 0}$ such that $0 \le 2r \le m \le M_{ij}$ are negative integers defined as follows.
We set $p_{ij}^{(0,m)} = -1$ and $p_{ij}^{(r,m)} = 0$ if $2r>m$ and impose the recursion
\eq{
p_{ij}^{(r,m)} = p_{ij}^{(r,m-1)} + (m-1)(M+1-m) p_{ij}^{(r-1,m-2)} \qq \text{if } 0 < r \le \lfloor \tfrac{m}{2} \rfloor.
}
\item \label{lem:bb:formulas:case4}
If $i$ and $j$ do not satisfy any of the three conditions above then $\ad(b_i)^{M_{ij}}(b_j)=0$.
\end{enumerate}
\end{lemma}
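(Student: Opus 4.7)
My plan is to expand $\ad(b_i)^{M_{ij}}(b_j)$ directly, using $b_i=f_i$ for $i\in X$ and $b_i=f_i+\theta(f_i)$ otherwise. When $i,j\in I\setminus X$ this produces at most $2^{M_{ij}+1}$ nested commutators in the letters $f_i,\theta(f_i),f_j,\theta(f_j)$. The pure contribution $\ad(f_i)^{M_{ij}}(f_j)$ vanishes by the Serre relations in $\mfg$, and $\ad(\theta(f_i))^{M_{ij}}(\theta(f_j))=\theta(\ad(f_i)^{M_{ij}}(f_j))$ vanishes for the same reason since $\theta$ is a Lie algebra automorphism. All the information therefore sits in the mixed terms.

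The second step is a weight analysis. A mixed term with $k$ copies of $\theta(f_i)$ acting on $f_j$ has $\mfh$-weight $-(M_{ij}-k)\al_i-k\theta^*(\al_i)-\al_j$, with the analogous expression (with $-\theta^*(\al_j)$ in place of $-\al_j$) appearing when $j\in I\setminus X$. Using $\theta^*(\al_i)=-w_X(\al_{\tau(i)})$ and the explicit formula in Lemma~\ref{lem:Xtau:basics}\ref{lem:Xtau:basics:wX:formula}, I would enumerate these weights and determine when they lie in $\Phi\cup\{0\}$. The argument will show that a nontrivial mixed contribution can occur precisely when the weight hypothesis of one of cases (i)--(iii) holds, and that this hypothesis forces the structural conclusions listed there (for instance $i\in I\setminus X$, $\tau(i)=i$ and $j\in X$ in case (i)). Outside these three configurations every mixed term lands in a trivial weight space, which is case (iv).

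The explicit prefactors in cases (i) and (ii) are then obtained by direct bracket computation, using $\theta(f_i)=-\chi(\al_i)\Ad(n_X)(e_{\tau(i)})$ together with the structure constants of the finite-type subalgebra $\mfg_X$; the factors $1+\ze_X(\al_i)$ and $1+\ze_X(\al_i)\chi(\al_i-\al_j)$ arise by symmetrizing over the two extremal orderings of $\ad(f_i)$ and $\ad(\theta(f_i))$, and the ``balanced'' subcases in which the weight equals $0$ produce $\mfh$-valued contributions coming from $[e_k,f_k]=h_k$. The main obstacle is case (iii): since $\theta^*(\al_i)=-\al_i$ the constraint $\tau(i)=i$, $i\in X^\perp$ allows many mixed terms to contribute simultaneously, yielding the Onsager-type recursion for the coefficients $p_{ij}^{(r,m)}$. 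My plan for this case is to observe that every bracket appearing in the expansion is supported on roots in $X\cup\{i,j,\tau(j)\}$, so the calculation takes place inside a subalgebra on which the possibly infinite-dimensional nature of $\mfg$ is invisible; the recursion then reduces verbatim to the finite-type computation of \cite[Eq.~(3.7)]{RV20}, from which $p_{ij}^{(r,m)}$ and the exact right-hand side can be imported. As a cross-check one could run an independent induction on $M_{ij}$ using the identity $[e_i,\ad(f_i)^m f_j]=m\,\ad(f_i)^{m-1}(h_i-(m-1))f_j$ inside the rank-two subalgebra generated by $e_i,f_i,h_i,f_j$.
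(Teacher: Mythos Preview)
Your proposal is correct and matches the paper's approach: the paper's own proof consists of the single sentence ``This follows from a careful root space analysis, see \cite[App.~A]{RV20}'', and your outline is precisely such a root space analysis, ultimately deferring (as the paper does) to the finite-type computation in \cite{RV20} for the explicit constants and the Onsager recursion. One small slip: with the conventions here $\theta(f_i)=-\chi(\al_i)^{-1}\Ad(n_X)(e_{\tau(i)})$, not $-\chi(\al_i)\Ad(n_X)(e_{\tau(i)})$, but this does not affect the structure of your argument.
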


\begin{proof}
This follows from a careful root space analysis, see \cite[App.\ A]{RV20}.
\end{proof}

The relation \eqref{bb:Onsager} and the coefficients $p_{ij}^{(r,m)}$ also appeared in \cite[Def.\ 2.3]{St19} which deals with the case $\theta=\omega$ and $\mfk = \mfg^\om$.
Such $\mfk$ are also called (embedded) \emph{generalized Onsager algebras}.

The following result is a key motivation for introducing generalized Satake diagrams.

\begin{thrm}{\cite[Thm.\ 3.3]{RV20}} \label{thm:k:GSat}
Let $(X,\tau,\chi)$ be an enriched compatible decoration and let $\theta = \Ad(\chi) \circ \theta(X,\tau)$.
The following statements are equivalent:
\begin{enumerate}
\item \label{thm:k:GSat:combinatorics}
$(X,\tau,\chi)$ is an enriched generalized Satake diagram;
\item \label{thm:k:GSat:niceSerre}
for all $i,j \in I$ such that $i \ne j$ one has
\eq{
\ad(b_i)^{M_{ij}}(b_j) \in \mfn^+_X \oplus \mfh^\theta \oplus \bigoplus_{\ell \in \Z_{>0}} \bigoplus_{\bm i \in I^\ell \atop \al_{\bm i}<\la_{ij}} \F b_{\bm i};
}
\item \label{thm:k:GSat:kdecomposition}
the following identity of $\mfh^\theta$-modules holds:
\eq{
\mfk = \mfn^+_X \oplus \mfh^\theta \oplus \bigoplus_{\bm i \in \mc J} \F b_{\bm i};
}
\item \label{thm:k:GSat:kpsfxdpt}
$\mfk$ is a pseudo-fixed-point subalgebra.
\end{enumerate}
\end{thrm}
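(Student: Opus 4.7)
The plan is to establish the four equivalences cyclically, (i) $\Rightarrow$ (ii) $\Rightarrow$ (iii) $\Rightarrow$ (iv) $\Rightarrow$ (i), using Lemma \ref{lem:bb:formulas} and Lemma \ref{lem:b:triangular} as the main computational tools. Since the analogous finite-type statement is \cite[Thm.~3.3]{RV20} and the Serre-relation computation in Lemma \ref{lem:bb:formulas} is local (it depends only on the pair $\{i,j\}$ and on the finite-type subdiagram $X[i]$), the main work is to confirm that all arguments survive in the symmetrizable Kac-Moody setting.

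For (i) $\Rightarrow$ (ii), I would run through the four cases of Lemma \ref{lem:bb:formulas}. Most sub-cases directly produce elements of $\mfn^+_X$ or of $\bigoplus_{\al_{\bm i} < \la_{ij}} \F b_{\bm i}$; two delicate sub-cases are governed by the defining conditions of an enriched generalized Satake diagram. First, the sub-case of Lemma \ref{lem:bb:formulas}\ref{lem:bb:formulas:case1} with $\theta^*(\al_i)+\al_i+\al_j = 0$ and $a_{ij}=-1$ would yield the Cartan element $-\chi(\al_i)^{-1}(2h_i + h_j)$, which does not lie in $\mfh^\theta$; this sub-case corresponds precisely to $X[i]$ having a connected component of type ${\sf A}_2$ containing $i$ and is forbidden by Definition \ref{def:GSat}\ref{def:GSat:vanilla}. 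Second, the sub-case of Lemma \ref{lem:bb:formulas}\ref{lem:bb:formulas:case2} with $\theta^*(\al_i)+\al_j=0$ and $a_{ij}=0$ produces $\chi(\al_j)^{-1}h_i - \chi(\al_i)^{-1}h_j$, which lies in $\mfh^\theta$ exactly when $\chi(\al_i)=\chi(\al_j)$, i.e.\ under the enrichment condition of Definition \ref{def:GSat}\ref{def:GSat:enriched}.

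For (ii) $\Rightarrow$ (iii), observe that $\mfk$ is generated by $\mfh^\theta \cup \mfn^+_X \cup \{b_i\}_{i \in I}$, so the right-hand side of (iii) contains the generators; one then shows it is closed under the Lie bracket by induction on the length of iterated $b$-brackets, feeding (ii) back in at each step. Directness of the sum follows from Lemma \ref{lem:b:triangular}: each $b_{\bm i}$ projects onto the nonzero element $f_{\bm i} \in \mfn^-$ via the triangular decomposition $\mfg = \mfn^+ \oplus \mfh \oplus \mfn^-$, and by construction $\{f_{\bm i}\}_{\bm i \in \mc J}$ is a basis of $\mfn^-$, which is transverse to $\mfn^+_X \oplus \mfh^\theta$. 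For (iii) $\Rightarrow$ (iv), the intersection identity \eqref{defn:psfixptsubalg:1} is immediate from the direct-sum decomposition, while the dimension identity \eqref{defn:psfixptsubalg:2} is obtained by matching, for each $\al \in \Phi$, appropriate $b_{\bm i}$'s and elements of $\mfn^+_X$ against a basis of $\mfg_\al + \theta(\mfg_\al)$, again using Lemma \ref{lem:b:triangular}.

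For (iv) $\Rightarrow$ (i), I would argue contrapositively: if $(X,\tau,\chi)$ fails either the ${\sf A}_2$-avoidance condition or the character-matching condition, then the corresponding Serre computation in Lemma \ref{lem:bb:formulas} produces a bracket $\ad(b_i)^{M_{ij}}(b_j) \in \mfk$ whose $\mfh$-component lies outside $\mfh^\theta$, violating \eqref{defn:psfixptsubalg:1}. The main expected obstacle is the bookkeeping for (i) $\Rightarrow$ (ii): tracking the vanishing or non-vanishing of the prefactors $(1+\ze_X(\al_i))$ and $(1+\ze_X(\al_i)\chi(\al_i-\al_j))$ across all sub-cases, and verifying the weight inequalities $\al_{\bm i} < \la_{ij}$ in Lemma \ref{lem:bb:formulas}\ref{lem:bb:formulas:case3}. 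As all such checks depend only on the pair $\{i,j\}$ and on the finite-type subdiagram $X[i]$ (cf.\ Lemma \ref{lem:Xtau:basics}\ref{lem:Xtau:basics:connected}), they transfer with minimal change from the finite-dimensional arguments of \cite[App.~A]{RV20}.
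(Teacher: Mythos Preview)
Your cyclic strategy and the locality observation match the paper's approach: the equivalence of (i), (ii), (iii) and of these with condition \eqref{defn:psfixptsubalg:1} is deferred to \cite{RV20}, and your sketches of (i) $\Rightarrow$ (ii), (ii) $\Rightarrow$ (iii) and (iv) $\Rightarrow$ (i) are in line with that reference.

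The one place where your proposal is underspecified is the verification of \eqref{defn:psfixptsubalg:2} in the step (iii) $\Rightarrow$ (iv). You appeal only to Lemma \ref{lem:b:triangular}, but that lemma tells you merely that $b_{\bm i} = f_{\bm i} + (\text{terms in higher root spaces})$; it does not place those higher terms inside $\mfn^+_X \oplus \mfh^\theta \oplus \mfg_{-\theta^*(\al_{\bm i})}$, so you cannot ``match $b_{\bm i}$'s and elements of $\mfn^+_X$ against a basis of $\mfg_\al + \theta(\mfg_\al)$'' directly. The paper instead works from (ii) and establishes by induction on height the refined formula
\[
b_{\bm i} - f_{\bm i} - \theta(f_{\bm i}) \in \mfn^+_X \oplus \mfh^\theta \oplus \bigoplus_{\bm j \in \mc J,\ \al_{\bm j} < \al_{\bm i}} \F b_{\bm j}
\]
(this is \cite[Eq.~(3.20)]{RV20}). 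From this one reads off that $f_{\bm i} + \theta(f_{\bm i}) \in \mfk$ for every $\bm i \in \mc J$, and then the set $\{ f_{\bm i} + \theta(f_{\bm i}) : \al_{\bm i} = \al \}$, which is linearly independent by Lemma \ref{lem:b:triangular}, supplies exactly $\dim(\mfg_{-\al})$ elements of $\mfk \cap (\mfg_{-\al} + \theta(\mfg_{-\al}))$ for each $\al \in \Phi^+ \backslash \Phi_X$. So the missing ingredient is this inductive refinement of Lemma \ref{lem:b:triangular}; once you insert it, your argument goes through and coincides with the paper's.
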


\begin{proof}
We refer to \cite{RV20} for the proof of the equivalence of the first 3 statements and condition \eqref{defn:psfixptsubalg:1} in the definition of pseudo-fixed-point subalgebra.
It suffices to prove that \ref{thm:k:GSat:niceSerre} implies condition \eqref{defn:psfixptsubalg:2} in the definition of pseudo-fixed-point subalgebra.
In fact, since $\mfg_\al \subset \mfk$ if $\al \in \Phi_X$ and $\theta^*$ interchanges $\Phi^+ \backslash \Phi_X$ and $-\Phi^+ \backslash \Phi_X$, it suffices to prove \eqref{defn:psfixptsubalg:2} with $\al \in -\Phi^+ \backslash \Phi_X$.

First of all, by an induction argument it follows from \ref{thm:k:GSat:niceSerre} that, for all $\bm i \in \mc J$, 
\eq{
b_{\bm i} - f_{\bm i} - \theta(f_{\bm i}) \in  \mfn^+_X \oplus \mfh^\theta \oplus \bigoplus_{\bm j \in \mc J \atop \al_{\bm j}<\al_{\bm i}} \F b_{\bm j},
}
see \cite[Eq.\ (3.20)]{RV20}. 
Hence, $f_{\bm i} + \theta(f_{\bm i})$ lies in $\mfk$.
On the other hand, $f_{\bm i} + \theta(f_{\bm i})$ lies in $\mfg_{-\al_{\bm i}} + \mfg_{-\theta^*(\al_{\bm i})}$. 
Now let $\al \in \Phi^+$ be arbitrary.
Since $\{ f_{\bm i} \, : \, \bm i \in \mc J, \, \al_{\bm i}= \al \}$ is a basis for $\mfg_{-\al}$, Lemma \ref{lem:b:triangular} implies that $\{ f_{\bm i} + \theta(f_{\bm i}) \, : \, \bm i \in \mc J, \, \al_{\bm i}= \al \}$ is a basis for $\mfg_{-\al}+\mfg_{-\theta^*(\al)}$, as required.
\end{proof}

\begin{rmk}
In \cite[Thm.\ 3.3]{RV20} we use the notation $\ga_i$ for the scalar $\chi(\al_i)^{-1}$, $i \in I \backslash X$; note~that the values $\chi(\al_j)$ for $j \in X$ are irrelevant for Theorem \ref{thm:k:GSat} and we may as well assume that $\chi|_{Q_X}=1$, cf.\ Corollary \ref{crl:pseudoinv:class:2}. \hfill \rmkend
\end{rmk}
 
 
\subsection{Iwasawa decomposition for pseudo-symmetric pairs}

Consider the subalgebra
\eq{
\mfn^+_\theta := \mfn^+ \cap \theta(\mfn^-) \subseteq \mfn^+.
}
Since $\mfn^+ \cap \theta(\mfn^+) = \mfn^+_X$ and $\mfh$ is $\theta$-stable, immediately we obtain $\mfn^+ = \mfn^+_X \oplus \mfn^+_\theta$.
Hence the triangular decomposition of $\mfg$ implies
\eq{
\mfg = \mfn^+_X \oplus \mfh^\theta \oplus \mfn^- \oplus \mfh^{-\theta} \oplus \mfn^+_\theta.
}
Next, $\mfn^- = \bigoplus_{\bm i \in \mc J} \F f_{\bm i}$ by definition of $\mc J$.
By Lemma \ref{lem:b:triangular} we may write
\eq{
\mfg = \mfn^+_X \oplus \mfh^\theta \oplus \bigoplus_{\bm i \in \mc J} \F b_{\bm i} \oplus \mfh^{-\theta}  \oplus \mfn^+_\theta,
}
which a priori is a decomposition of linear spaces, since $\bigoplus_{\bm i \in \mc J} \F b_{\bm i}$ is not a subalgebra of $\mfg$.
However Theorem \ref{thm:k:GSat} \ref{thm:k:GSat:kdecomposition} now implies the following decomposition for $\mfg$ in terms of subalgebras.

\begin{crl}[Iwasawa decomposition for the pseudo-symmetric pair $(\mfg,\mfk)$] \label{crl:Iwasawa}
Let $(X,\tau,\chi)$ be an enriched compatible decoration and let $\theta = \Ad(\chi) \circ \theta(X,\tau)$.
Then $\mfg$ has the following decomposition in terms of subalgebras:
\eq{
\mfg = \mfk \oplus \mfh^{-\theta}  \oplus \mfn^+_\theta
}
if and only if $(X,\tau,\chi)$ is an enriched generalized Satake diagram.
\end{crl}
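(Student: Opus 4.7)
The plan is to reduce the corollary to a linear-algebra comparison, so that Theorem \ref{thm:k:GSat} then does all the substantive work. First I would observe that the three summands on the right-hand side are always subalgebras, regardless of whether $(X,\tau,\chi)$ is an enriched generalized Satake diagram: $\mfk$ is a subalgebra by construction, $\mfh^{-\theta}$ lies inside the abelian Cartan $\mfh$, and $\mfn^+_\theta = \mfn^+ \cap \theta(\mfn^-)$ is the intersection of two subalgebras of $\mfg$. Consequently the content of the corollary is purely the direct-sum decomposition at the level of linear spaces.

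The key preliminary, which I would record explicitly, is that the linear space
\[
V := \mfn^+_X \oplus \mfh^\theta \oplus \bigoplus_{\bm i \in \mc J} \F b_{\bm i}
\]
is contained in $\mfk$ for every enriched compatible decoration: each $b_{\bm i}$ is an iterated Lie bracket of the generators $b_i$, while $\mfn^+_X$ and $\mfh^\theta$ appear in the generating set of $\mfk$. Combined with the unconditional linear-space identity $\mfg = V \oplus \mfh^{-\theta} \oplus \mfn^+_\theta$ already derived in the preamble to the corollary, this reduces the problem to the assertion that $V = \mfk$ if and only if $(X,\tau,\chi)$ is an enriched generalized Satake diagram.

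From this point both implications are immediate. For the forward direction, Theorem \ref{thm:k:GSat}\,\ref{thm:k:GSat:kdecomposition} supplies $\mfk = V$, and substitution into the preamble's linear decomposition yields the Iwasawa decomposition. For the converse, assume $\mfg = \mfk \oplus \mfh^{-\theta} \oplus \mfn^+_\theta$; then $V \subseteq \mfk$ together with the observation that $V$ and $\mfk$ are both direct complements to $\mfh^{-\theta} \oplus \mfn^+_\theta$ forces $V = \mfk$ (by projecting modulo the common complement and using that both projections are isomorphisms onto the quotient), at which point Theorem \ref{thm:k:GSat} is applied in the reverse direction to conclude. I do not anticipate a genuine obstacle: Theorem \ref{thm:k:GSat} absorbs all the combinatorial difficulty concerning the Serre-type relations among the $b_i$, and what remains is the bookkeeping sketched above, together with the routine verification that $\mfh^{-\theta}$ and $\mfn^+_\theta$ are subalgebras.
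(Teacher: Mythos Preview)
Your proposal is correct and follows essentially the same route as the paper: the preamble establishes the unconditional linear decomposition $\mfg = V \oplus \mfh^{-\theta} \oplus \mfn^+_\theta$ with $V = \mfn^+_X \oplus \mfh^\theta \oplus \bigoplus_{\bm i \in \mc J} \F b_{\bm i}$, and then Theorem~\ref{thm:k:GSat}\,\ref{thm:k:GSat:kdecomposition} is invoked to identify $V$ with $\mfk$ precisely under the generalized Satake hypothesis. Your treatment of the converse direction (using $V \subseteq \mfk$ and the common complement to force $V = \mfk$) is more explicit than the paper, which leaves that implication to the reader, but the argument is the same.
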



\subsection{A combinatorial description of $\mfk'$} \label{sec:kprime}

We complete this section with a description of the derived subalgebra $\mfk'$, which indicates how the universal enveloping algebra $U\mfk$ can be modified by scalar terms. 
This directly extends the discussion in \cite[Sec.\ 3.3]{RV20} to the Kac-Moody setting.
Let $(X,\tau,\chi)$ be an enriched generalized Satake diagram and fix a subset $I^* \subset I \backslash X$ intersecting each $\tau$-orbit in a singleton.
Set 
\eq{
\mc J_X := \mc J \cap \bigcup_{\ell >0} X^\ell.
}
From \eqref{theta:identity} it follows that $\{ h_i \}_{i \in X} \cup \{ h_i - h_{\tau(i)} \}_{i \in I^*, \tau(i) \ne i}$ is a basis for $\mfh^\theta$.
Hence, Theorem \ref{thm:k:GSat} implies that 
\eq{
\{ e_{\bm i} \}_{\bm i \in \mc J_X} \cup \{ h_i \}_{i \in X} \cup \{ h_i - h_{\tau(i)} \}_{i \in I^*, \tau(i) \ne i} \cup \{ b_{\bm i}  \}_{\bm i \in \mc J}
}
is an $\F$-basis for $\mfk$.
Now consider the following subsets of $I^*$:
\begin{align}
I_{\sf diff} &= \{ i \in I^* \, : \, i \notin (X \cup {\tau(i)})^\perp \land \tau(i) \ne i\} , \\
I_{\sf ns} &= \{ i \in I^* \, : \, i \in X^\perp \land \tau(i)=i \} ,  \label{Ins:def} \\
I_{\sf nsf} &= \{ j \in I_{\sf ns} \, : \, \forall i \in I_{\sf ns} \; a_{ij} \in 2\Z \}.
\end{align}

Elements of $I_{\sf diff}$ and $I_{\sf nsf}$ are called \emph{special $\tau$-orbits}.

\begin{prop}{\cite[Prop.~3.2]{RV20}}
Let $(X,\tau,\chi)$ be an enriched generalized Satake diagram.
Then the set
\eq{
\{ e_{\bm i} \}_{\bm i \in \mc J_X} \cup \{ h_i \}_{i \in X} \cup \{ h_i - h_{\tau(i)} \}_{i \in I^*\backslash I_{\sf diff}, \tau(i) \ne i} \cup \{ b_{\bm i}  \}_{\bm i \in \mc J \backslash I_{\sf nsf}}
}
is an $\F$-basis for $\mfk'$.
Moreover, 
\eq{ \label{k:basis}
\mfk \cap \mfg' = \mfk' \rtimes \Bigg( \bigoplus_{i \in I_{\sf diff}} \F (h_i-h_{\tau(i)}) \oplus \bigoplus_{j \in I_{\sf nsf}} \F b_j \Bigg).
}
\end{prop}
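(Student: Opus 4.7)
The plan is to extend the finite-type argument of \cite[Prop.~3.2]{RV20} to the Kac-Moody setting without essential modification. Theorem \ref{thm:k:GSat} already furnishes the $\F$-basis
\[
\mc B = \{ e_{\bm i} \}_{\bm i \in \mc J_X} \cup \{ h_i \}_{i \in X} \cup \{ h_i - h_{\tau(i)} \}_{i \in I^*, \tau(i) \ne i} \cup \{ b_{\bm i}  \}_{\bm i \in \mc J}
\]
for $\mfk \cap \mfg'$, so the proposition reduces to two tasks: (a) each element of $\mc B$ indexed by the complement of $I_{\sf diff} \cup I_{\sf nsf}$ lies in $\mfk' := [\mfk, \mfk]$, and (b) the elements $\{h_i - h_{\tau(i)}\}_{i \in I_{\sf diff}}$ and $\{b_j\}_{j \in I_{\sf nsf}}$ are linearly independent in the quotient $\mfk/\mfk'$. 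The identity \eqref{k:basis} then follows immediately.

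For (a), each generator type is handled by an explicit commutator. The inclusion $\mfn^+_X \subseteq \mfk'$ follows from the semisimplicity of $\mfg_X$ (since $X$ is of finite type); the identities $h_i = [e_i, b_i]$ and $b_i = -\tfrac12 [h_i, b_i]$ for $i \in X$ handle those two families; and each $b_{\bm i}$ with $|\bm i| \ge 2$ is an iterated commutator by construction. For $b_i$ with $i \in I \setminus X$ and $i \notin I_{\sf nsf}$, three (not mutually exclusive) subcases exhaust the complement of $I_{\sf nsf}$: if $i \notin X^\perp$, pick $j \in X$ with $a_{ji} \ne 0$ and use $[h_j, b_i] = -a_{ji}\, b_i$; if $\tau(i) \ne i$, use the identity $[h_i - h_{\tau(i)}, b_i] = (a_{\tau(i),i} - 2)\, b_i$, derived from $\theta(h_i) = -h_{\tau(i)}$ (a consequence of $\theta|_\mfh = -w_X \tau|_\mfh$ and $w_X h_i = h_i$ when $i \in X^\perp$); if $i \in I_{\sf ns} \setminus I_{\sf nsf}$, pick $j \in I_{\sf ns}$ with $a_{ji}$ odd and apply Lemma \ref{lem:bb:formulas}\ref{lem:bb:formulas:case3} with $i, j$ interchanged — since $M_{ji}$ is then even, the zeroth-order term in \eqref{bb:Onsager} is $b_i$ itself, placing it in $\mfk'$. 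Finally, for $h_i - h_{\tau(i)}$ with $i \in I^* \setminus I_{\sf diff}$ and $\tau(i) \ne i$ (whence $i \in X^\perp$ and $a_{i\tau(i)} = 0$), I would compute $[b_i, b_{\tau(i)}]$ directly: the vanishing of $[f_i, f_{\tau(i)}]$ and $[\theta(f_i), \theta(f_{\tau(i)})]$ (as $\al_i + \al_{\tau(i)}$ is not a root) leaves $[b_i, b_{\tau(i)}] = c_i(h_{\tau(i)} - h_i)$, where $c_i \in \F^\times$ is defined by $\theta(f_i) = c_i e_{\tau(i)}$. The scalar equality $c_i = c_{\tau(i)}$ needed to get a nonzero multiple of $h_i - h_{\tau(i)}$ follows from the commutation of $\tau$ with $\Ad(n_X)$ (Section \ref{sec:subfinite}) together with the enriched compatibility $\chi(\al_i) = \chi(\al_{\tau(i)})$.

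For (b), for each $j \in I_{\sf nsf}$ (respectively $i \in I_{\sf diff}$) I would construct a Lie algebra character $\phi: \mfk \to \F$ that is nonzero on $b_j$ (respectively $h_i - h_{\tau(i)}$) and zero on the remaining elements of $\mc B$. Since any such $\phi$ vanishes on $\mfk'$ automatically, the existence of these characters separates the claimed complement from $\mfk'$ and completes the proof. The characters are defined on the Serre-type generators of $\mfk$ and consistency is checked against the relations in Lemma \ref{lem:bb:formulas}; the combinatorial conditions defining $I_{\sf nsf}$ (evenness of $a_{ij}$ among elements of $I_{\sf ns}$) and $I_{\sf diff}$ (failure of orthogonality to $X \cup \{\tau(i)\}$) are precisely what ensures the assignment lifts consistently. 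The principal obstacle lies here: ruling out hidden relations in $\mfk$ that would force additional elements into $\mfk'$. Because Lemma \ref{lem:bb:formulas} gives an exhaustive list of Serre-type identities among the $b_i$ and the Kac-Moody setting introduces no further relations among the $e_j$ ($j \in X$) or $h \in \mfh^\theta$, the finite-type verification of \cite{RV20} transports to our setting verbatim.
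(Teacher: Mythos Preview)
The paper provides no proof of this proposition; it simply records it as \cite[Prop.~3.2]{RV20}. Your proposal is a faithful sketch of that reference's argument, so there is no divergence to discuss.

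A couple of small comments nonetheless. In part (a), the fact that $h_i - h_{\tau(i)} \in \mfh^\theta$ holds for \emph{all} $i \in I$, not just $i \in X^\perp$: it follows directly from \eqref{theta:identity} since $h_{\tau(i)} - h_i \in \im(\tau - \id)$. Your commutator identity $[h_i - h_{\tau(i)}, b_i] = (a_{\tau(i),i}-2)b_i$ therefore needs no hypothesis on $i$ beyond $\tau(i) \ne i$. In part (b), be aware that your character argument, as phrased, is in danger of being circular: $\mfk$ is defined as a subalgebra of $\mfg$, not by a presentation, so ``checking the relations in Lemma~\ref{lem:bb:formulas}'' is not a priori sufficient to show that an assignment on generators extends to a homomorphism $\mfk \to \F$. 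The honest content of that step is the direct one you allude to at the end: using the basis $\mc B$ and the $\mfh^\theta$-weight grading, verify that the coefficient of $b_j$ (respectively $h_i - h_{\tau(i)}$) in the expansion of any bracket $[a,b]$ with $a,b \in \mc B$ is zero. This is what \cite{RV20} does, and your concluding sentence correctly observes that the verification is insensitive to whether $A$ is of finite type.
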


\begin{rmk} \mbox{}
\begin{enumerate}
\item
In \eqref{k:basis}, the intersection with $\mfg'$ is only necessary since $\mfh$ may contain additional $\theta$-fixed elements if $\cork(A)>1$. 
In other words, $\mfk \subseteq \mfg'$ if $\cork(A) \le 1$.
\item
By Definition \ref{def:GSat} \ref{def:GSat:enriched}, if $(X,\tau,\chi)$ is an enriched generalized Satake diagram and $i \in I^*$ then $\chi(\al_i)$ and $\chi(\al_{\tau(i)})$ are allowed to be \emph{different} precisely if $i \in I_{\sf diff}$. This explains the notation.
\item
The set $I_{\sf ns}$ is implicit in Lemma \ref{lem:bb:formulas} \ref{lem:bb:formulas:case3}.
Its elements are called \emph{nonstandard}; this nomenclature goes back to \cite[Sec.~7, Variation 2]{Le02} and \cite[Sec.~5]{Ko14} where it was pointed out that if $i \in I_{\sf nsf}$ then a \emph{free} parameter appears in the extra Cartan term in the quantum analogues of the generators $b_i$ in $U_q\mfk$.
The non-deformed version of this statement is the result \cite[Prop.~3.13]{RV20}, which also extends directly to the Kac-Moody setting.
It characterizes, in terms of the set $I_{\sf nsf}$, the essential freedom in the enveloping algebra $U\mfk$ with respect to the additional scalar terms. 
\hfill \rmkend
\end{enumerate}
\end{rmk}



\section{The restricted Weyl group and restricted root system} \label{sec:restricted}

In this section we give a survey on results on the restricted Weyl group and restricted root system associated to a root system involution of a Kac-Moody algebra, thereby generalizing some of Heck's work \cite{He84} for finite root systems and drawing on works by Lusztig \cite{Lu95} (also see \cite[Ch.~25]{Lu03}) and Geck and Iancu \cite{GI14}. 

For finite root systems $\Phi$, Heck \cite{He84} studied involutive automorphisms of finite root systems $\Phi$. 
He obtained a diagrammatic constraint on involutions of finite root systems such that the associated restricted root system and restricted Weyl group have natural properties, thereby essentially obtaining the notion of a generalized Satake diagram (although the terminology in \cite{He84} is different). 
Heck's work was a simplification of Schattschneider's work \cite{Sch69} in the still more general case of an arbitrary group of automorphisms of a finite root system $\Phi$.

Note that the Weyl group $W$ is a Coxeter group (see e.g.\ \cite[\S 3.13]{Ka90}, \cite{MT72} and \cite{Lo80}), acting by reflections in the root system of $\mfg$.
A particular obstacle in the restricted setting is that, given a compatible decoration $(X,\tau)$ and the associated root system involution $w_X \circ \tau$, there are no fewer than three different natural definitions of the restricted Weyl group, which we will denote by $W(\ol \Phi)$, $\ol W$ and $\wt W$.
We will see that, precisely if $(X,\tau)$ is a generalized Satake diagram, these three groups are isomorphic.
Moreover in this case this group is a Coxeter group and the restricted root system is stable under its action.


\subsection{The $\Q$-span of the root system}

By \eqref{Autgh:decomposition}, the subgroup of $\GL(\mfh')$ of invertible linear maps on $\mfh'$ that extend to Lie algebra automorphisms of $\mfg$ is of the form $\Out(A) \ltimes W$.
The definition of root space implies that the duals of such maps on $\mfh'$ necessarily stabilize $\Phi$.
Conversely, an invertible linear map on the dual of $\mfh'$ which stabilizes $\Phi$ can be dualized and extended to a Lie algebra automorphism stabilizing $\mfh$.

It is convenient here to work over the ordered subfield $\Q \subset \F$ in order to facilitate certain (geometric) constructions, so we consider the following $|I|$-dimensional vector space over $\Q$:
\eq{
V := \Sp_\Q \Phi = \Sp_\Q \Pi.
}
The group of \emph{root system automorphisms} $\Aut(\Phi) \cong \Out(A) \ltimes W$ is the subgroup of those $\si \in \GL(V)$ which stabilize $\Phi$.
Note that the bilinear form $(\phantom{x},\phantom{x})$ restricts to an $\Aut(\Phi)$-invariant $\Q$-valued bilinear form on $V$.


\subsection{Root system involutions and the corresponding orthogonal decompositions}

By the results of Section \ref{sec:pseudoinvolutions}, any pseudo-involution of the second kind is $\Inn(\mfg)$-conjugate to $\Ad(\chi) \circ \theta(X,\tau)$ for some extended compatible decoration $(X,\tau,\chi)$.
The automorphism $\Ad(\chi) \circ \theta(X,\tau)$ restricts to the linear involution $-w_X \circ \tau$ on $\mfh$, stabilizing the subspace $\mfh'$.
The dual map on $(\mfh')^* = \Sp_\F\Phi$ is given by the same formula.
It restricts to the $\Q$-span $V$ of $\Phi$ and is an example of an involutive root system automorphism, which we call \emph{root system involution}.

Our focus will be on the map
\eq{
\si := w_X \circ \tau = -\theta(X,\tau)^*|_V \in \Aut(A) \ltimes W,
}
which is also the convention used in \cite{He84}.
Since $\si$ is an involution the following orthogonal linear decomposition holds:
\eq{ \label{V:decomposition}
V = V^\si \oplus V^{-\si}.
}
We denote by $\ol{\phantom{x}}$ the corresponding projection: $V \to V^\si$, so that 
\eq{
\ol{\la} = \frac{\la + \si(\la)}{2} \qq \text{for all } \la \in V.
}
Because $\si$ is an isometry, the following property holds, which we will repeatedly use in this section:
\eq{
(\ol{\be},\ol{\ga}) = (\ol{\be},\ga) = (\be,\ol{\ga}) \qq \text{for all } \be,\ga \in V.
}

For $J \subseteq I$ we denote $V_J := \Sp_\Q \Phi_J$.
Since $(X,\tau)$ is a compatible decoration, the identity $\si|_{V_X} = w_X \circ \oi_X|_{V_X} = -\id_{V_X}$ holds and hence the projection $\ol{\phantom{x}}$ annihilates $V_X$.
The linear map $w_X - \id$ maps $V$ to $V_X$; since $\si$ commutes with $w_X$, we obtain 
\eq{ \label{bar:wX}
w_X \circ \ol{\phantom{x}} = \ol{\phantom{x}} \circ w_X = \ol{\phantom{x}}.
}
By multiplying by $\si$ we also obtain
\eq{ \label{bar:tau}
\tau \circ \ol{\phantom{x}} = \ol{\phantom{x}} \circ \tau = \ol{\phantom{x}}.
}
From \eqrefs{bar:wX}{bar:tau} it follows that $V^\si \subseteq V^{w_X}$ and $V^\si \subseteq V^\tau$.
Since $V^{w_X} \cap V^{\tau} \subseteq V^\si$ is clear, we obtain
\eq{ \label{Vsigma:Vtau:VwX}
V^\si = V^{w_X} \cap V^\tau.
}


\subsection{The restricted root system}

\begin{defn}
Let $\si$ be a root system involution.
For any subset $\Si \subseteq \Phi$ (not necessarily a root subsystem), we denote
\eq{
\ol \Si := \{ \ol{\al} \, : \, \al \in \Si \} \backslash \{0\}.
}
We call $\ol \Phi$ the \emph{restricted} (or \emph{relative}) \emph{root system} associated to $\si$ and its elements \emph{restricted roots}; the dimension of $\Sp_\Q(\ol\Phi)$ is called the \emph{restricted rank} of $\si$. \hfill \defnend
\end{defn}

If $\si = w_X \circ \tau$ with $(X,\tau)$ a compatible decoration, then $\ol{\al}=0$ if and only if $\al \in \Phi_X$, so that 
\eq{ \label{barSigma:X}
\ol \Si = \{ \ol{\al} \, : \, \al \in \Si \backslash \Phi_X \}
}
for any $\Si \subseteq \Phi$.
Equivalently, $\ol\Phi$ may be defined as $\{ \al|_{\mfh^{-\theta(X,\tau)}} \} \backslash \{0\}$, see e.g.~\cite[Sec.\ 2.3]{DK19}.

The restricted root system can be empty, non-reduced and non-crystallographic.
More problematically, it is not necessarily stable under reflections in hyperplanes orthogonal to its elements.
We will return to this question shortly, indicating constraints on $(X,\tau)$ to guarantee that $\ol\Phi$ is reflection-stable.

\begin{lemma} \label{lem:restrictedsize}
Let $\si$ be a root system involution and let $J \subseteq I$ be such that $\si$ stabilizes $V_J$.
\begin{enumerate}
\item \label{lem:restrictedsize:finite}
If $J$ is of finite type, then $\ol{\Phi_J}$ is finite and, since $(V_J)^\si \subseteq V_J$, the restriction of $(\phantom{x},\phantom{x})$ to $(V_J)^\si$ is positive definite.
\item \label{lem:restrictedsize:infinite}
If $J$ is of infinite type we denote by $\Phi^+_{\im,J}$ the set of positive imaginary roots in $\Phi_J$.
Then $\ol{\Phi^+_{\im,J}}$ is infinite and contained in $\ol{\Phi^+_J}$.
Furthermore, if $(V_J)^\si$ is 1-dimensional then $(\be,\be) \le 0$ for all $\be \in \ol{\Phi_J}$.
\end{enumerate}
\end{lemma}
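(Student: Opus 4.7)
My plan is to handle (i) directly using positive-definiteness in finite type, and (ii) in two stages: infiniteness of $\ol{\Phi^+_{\im,J}}$, then the norm bound in the 1-dimensional case, both hinging on a choice of positive imaginary root in the fundamental imaginary chamber of $\mfg_J$. For (i), since $J$ is of finite type, $\Phi_J$ is finite (Section 2.6), so $\ol{\Phi_J}$ is finite as its linear image; moreover the form is positive definite on $V_J$ (Section 2.6, transferred via the isomorphism $\nu$), and positive-definiteness descends to the subspace $(V_J)^\si$.

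For the infiniteness claim in (ii), the key observation is that both $w_X$ and $\tau$ preserve $\Phi^+_{\im}$, so $\si$ preserves $\Phi^+_{\im,J}$. I would then choose $\delta_J$ in the fundamental imaginary chamber $K_J$ of $\mfg_J$ (non-empty since $J$ is of infinite type), so that $(\delta_J, \al_j) \le 0$ for all $j \in J$. Then $\si \delta_J$ is positive imaginary, hence cannot equal $-\delta_J$, giving $\ol{\delta_J} \ne 0$. Since $K_J$ is closed under positive integer scaling, $\{n \delta_J\}_{n \ge 1} \subseteq K_J \subseteq \Phi^+_{\im,J}$, and the images $\{n \ol{\delta_J}\}$ form an infinite family in $\ol{\Phi^+_{\im,J}}$; the inclusion $\ol{\Phi^+_{\im,J}} \subseteq \ol{\Phi^+_J}$ is immediate.

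For the 1-dimensional case, $(V_J)^\si = \Q \ol{\delta_J}$, and I would use
\[
(\ol{\delta_J}, \ol{\delta_J}) = \tfrac{1}{2} \bigl( (\delta_J, \delta_J) + (\delta_J, \si \delta_J) \bigr).
\]
The first summand is $\le 0$ because $\delta_J$ is imaginary; the second is $\le 0$ because $\si \delta_J$ lies in $Q^+_J$, hence expands as a non-negative combination of $\{\al_j\}_{j \in J}$, each of which pairs non-positively with $\delta_J \in K_J$. This forces $(v,v) \le 0$ on the one-dimensional space $(V_J)^\si$, and therefore $(\be, \be) \le 0$ for every $\be \in \ol{\Phi_J}$.

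I expect the most delicate step to be the chamber-theoretic choice of $\delta_J$ rather than an arbitrary positive imaginary root: the defining inequalities on $K_J$, together with the $\si$-stability of $Q^+_J$, are precisely what convert a potentially indefinite cross-pairing into a clean sign estimate; without the chamber condition the cross-term $(\delta_J, \si \delta_J)$ would have no obvious sign, and the argument would stall.
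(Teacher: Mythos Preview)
Your proof is correct and follows essentially the same approach as the paper's: both pick an imaginary root in the fundamental chamber $K_J$ (the paper calls it $\lambda$ and phrases the chamber condition as $(\lambda,\mu)\le 0$ for all positive $\mu$), use $\si$-stability of $\Phi^+_{\im}$ together with scaling to obtain infiniteness, and then bound $(\ol{\lambda},\ol{\lambda}) = \tfrac12\bigl((\lambda,\lambda)+(\lambda,\si\lambda)\bigr)$ via the chamber inequalities. Your explicit invocation of $\si = w_X \circ \tau$ to justify $\si$-invariance of $\Phi^+_{\im}$ is exactly what the paper needs behind its ``in particular, $\si$ stabilizes $\Phi^+_{\im,J}$'' step.
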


\begin{proof}\mbox{}
\begin{enumerate}[leftmargin=!,font={\itshape},labelwidth=\widthof{\emph{(ii)}}]
\item[{\ref{lem:restrictedsize:finite}}]
If $J$ is of finite type, then $\Phi_J$ is finite and hence $\ol{\Phi_J}$ is finite.
Since the restriction of the bilinear form $(\phantom{x},\phantom{x})$ to $V_J = \Sp_\Q(\Phi_J)$ is positive definite and $\si$ is an isometry, the bilinear form $(\phantom{x},\phantom{x})$ restricts to $(V_J)^{\si}$ and the restriction is positive definite.
\item[{\ref{lem:restrictedsize:infinite}}] If $J$ is of infinite type then by \cite[Thm.\ 5.6]{Ka90}, $\Phi^+_{\im,J}$ is nonempty.
Note that $\Aut_{J}(A) \ltimes W_{J}$ stabilizes $\Phi^+_{\im,J}$ as a consequence of \cite[Thm.\ 5.4]{Ka90}.
In particular, $\si$ stabilizes $\Phi^+_{\im,J}$ and therefore $\ol{\la} \ne 0$ if $\la \in \Phi^+_{\im,J}$.
It follows that $\ol{\Phi^+_{\im,J}} \subseteq \ol\Phi$.
By \cite[Prop.\ 5.5]{Ka90}, $\Phi^+_{\im,J}$ is stable under multiplication by $\Z_{\ge 0}$, so that $\ol{\Phi^+_{\im,J}}$ is infinite.
Finally, by \cite[Thm.\ 5.6]{Ka90} there exists $\la \in \Phi_{\im,J}^+$ such that $(\la, \mu) \le 0$ for all $\mu \in \Phi^+$.
Hence $(\la,\la) \le 0$ and moreover, since $\si(\la) \in \Phi^+$, also $(\la,\si(\la)) \le 0$.
Therefore $(\ol{\la},\ol{\la}) = (\la,\ol{\la}) \le 0$ as required. 
If $(V_J)^\si$ is 1-dimensional, it is spanned over $\Q$ by $\ol{\la}$ and we obtain $(\be,\be) \le 0$ for all $\be \in \ol{\Phi_J}$. \qedhere
\end{enumerate}
\end{proof}

\begin{rmk}
If $J$ is of affine type then we can strengthen Lemma \ref{lem:restrictedsize} \ref{lem:restrictedsize:infinite} as a consequence of \cite[Theorem 5.6 (b)]{Ka90}.
Namely, the kernel of $A_J$ is spanned by a unique tuple of setwise coprime positive integers $(a_j)_{j \in J}$ and $\Phi^+_{\im,J} = \Sp_{\Z_{> 0}} \del_{J}$ where $\del_{J} = \sum_{j \in J} a_j \al_j$ is the \emph{basic imaginary root}.
For all $j \in J$ it follows that $\del_J(h_j)=0$ and hence $s_j(\del_J)=\del_j$.
Since $\Aut(A_{J})$ preserves the height function on $Q_{J}$, we obtain $\tau(\del_{J})=\del_{J}$.
It follows that $\ol{\del_{J}} = \del_{J}$ and hence $\ol{\Phi^+_{\im,J}} = \Phi^+_{\im,J}$.
\hfill \rmkend
\end{rmk}


\subsection{Combinatorial bases for $V^\si$ and $V^{-\si}$.}

Recall that we have fixed a subset $I^* \subset I \backslash X$ which intersects each $\tau$-orbit in a singleton.
Using \eqref{bar:tau} and \eqref{barSigma:X}, we obtain 
\eq{
\ol \Pi = \{ \ol{\al_i} \}_{i \in I} \backslash \{0\} = \{ \ol{\al_i} \}_{i \in I^*}.
}
Note that the set $\ol\Pi$ is linearly independent (for all $i \in I^*$, $\ol{\al_i}$ is the only element of $\ol\Pi$ such that when expanding with respect to the basis $\Pi$ of $V$, the coefficient of $\al_i$ is nonzero).

\begin{lemma} \label{lem:Vsigma:base}
Given a compatible decoration $(X,\tau)$, let $\si = w_X \circ \tau$ be the corresponding root system involution. 
Then 
\eq{
V^\si = \Sp_\Q\big( \ol \Pi \big) = \Sp_\Q\big( \ol \Phi \big) .
}
Moreover, $\ol \Phi = \ol{\Phi^+} \cup (-\ol{\Phi^+})$ with $\ol{\Phi^+}$ contained in $\Sp_{\Z_{\ge 0}}\big( \ol \Pi \big)$.
\end{lemma}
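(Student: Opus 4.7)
The plan is to decompose the statement into four simple assertions and prove each by tracking the linear surjection $\ol{\phantom{x}} : V \to V^\si$ on the spanning set $\Pi$.

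First I would establish $V^\si = \Sp_\Q(\ol\Pi)$. The projection $\ol{\phantom{x}}$ is $\Q$-linear and surjective onto $V^\si$ (its range is $V^\si$ by construction of the orthogonal decomposition \eqref{V:decomposition}). Since $V = \Sp_\Q \Pi$, applying the linear map yields $V^\si = \ol{V} = \Sp_\Q\{\ol{\al_i}\}_{i \in I}$, and removing the zero elements from the spanning set does not change the span; thus $V^\si = \Sp_\Q \ol{\Pi}$. The chain of inclusions $\Sp_\Q \ol{\Pi} \subseteq \Sp_\Q \ol{\Phi} \subseteq V^\si$ (the second because $\ol\Phi \subseteq V^\si$ trivially) then forces all three spaces to agree, giving $V^\si = \Sp_\Q(\ol\Pi) = \Sp_\Q(\ol\Phi)$.

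Next, for the decomposition $\ol\Phi = \ol{\Phi^+} \cup (-\ol{\Phi^+})$, I would simply invoke the root-space decomposition $\Phi = \Phi^+ \cup (-\Phi^+)$ together with the $\Q$-linearity of $\ol{\phantom{x}}$, which gives $\ol{-\al} = -\ol{\al}$; this yields the claim after removing the zero element on both sides (using \eqref{barSigma:X}).

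Finally, for $\ol{\Phi^+} \subseteq \Sp_{\Z_{\ge 0}}(\ol\Pi)$, I would take any $\al \in \Phi^+$ and write $\al = \sum_{i \in I} n_i \al_i$ with $n_i \in \Z_{\ge 0}$, so that $\ol{\al} = \sum_{i \in I} n_i \ol{\al_i}$. For $i \in X$ the term vanishes, while for $i \in I \setminus X$ we have $\ol{\al_i} = \ol{\al_{\tau(i)}}$ by \eqref{bar:tau}. Collecting terms over the chosen transversal $I^* \subset I \setminus X$ of $\tau$-orbits gives
\eq{
\ol{\al} = \sum_{i \in I^*} c_i \, \ol{\al_i}, \qq c_i := n_i + (1 - \del_{i,\tau(i)})\, n_{\tau(i)} \in \Z_{\ge 0},
}
which exhibits $\ol{\al}$ as a nonnegative integer combination of elements of $\ol{\Pi}$.

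There is no serious obstacle: the statement is essentially bookkeeping on the projection $\ol{\phantom{x}}$. The only subtlety is keeping track of two different reasons why a term $\ol{\al_i}$ can drop out of a spanning expression, namely that $\ol{\al_i} = 0$ for $i \in X$ (from $\si|_{V_X} = -\id_{V_X}$) and that $\ol{\al_i} = \ol{\al_{\tau(i)}}$ for $i \in I \setminus X$ (from \eqref{bar:tau}); both are already recorded in the preceding discussion, so one may invoke them without further ado.
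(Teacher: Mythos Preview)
Your proof is correct and follows essentially the same approach as the paper: both arguments use the linearity of $\ol{\phantom{x}}$ on the expansion $\sum_i m_i \al_i$ together with the identifications $\ol{\al_i}=0$ for $i\in X$ and $\ol{\al_i}=\ol{\al_{\tau(i)}}$, and both close via the chain $\Sp_\Q(\ol\Pi)\subseteq \Sp_\Q(\ol\Phi)\subseteq V^\si$. The only cosmetic difference is that you phrase the first step as surjectivity of the projection, while the paper starts from an element $\be\in V^\si$ and uses $\be=\ol\be$.
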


\begin{proof}
Let $\be = \sum_{i \in I} m_i \al_i \in V^\si$ with $m_i \in \Q$.
Since $\be$ is fixed by $\si$ it equals $\ol\be$ given by
\eq{ \label{sigmafixed:decomposition}
\ol{\be} = \sum_{i \in I} m_i \ol{\al_i} = \sum_{i \in I^* \atop \tau(i)=i} m_i \ol{\al_i} + \sum_{i \in I^* \atop \tau(i) \ne i} (m_i+m_{\tau(i)}) \ol{\al_i}.
}
Therefore $V^\si \subseteq \Sp_\Q(\ol \Pi)$.
On the other hand, one has the natural inclusions $\Sp_\Q\big(\ol{\Pi}\big) \subseteq \Sp_\Q\big( \ol \Phi \big) \subseteq V^\si$ and the first statement follows.
To establish the second statement, suppose $\be \in \ol \Phi$. 
Then $\be = \ol{\pm \la}$ for some $\la \in \Phi^+$.
Hence either $\be$ or $-\be$ lies in $\sum_{i \in I} m_i \al_i$ with $m_i\in \Z_{\ge 0}$.
Now \eqref{sigmafixed:decomposition} implies that $\pm \be$ is a $\Z_{\ge 0}$-linear combination of the $\ol{\al_i}$.
\end{proof}

\begin{rmk}
If $B \subseteq R \subseteq V$ is an inclusion of sets with $V$ a $\Q$-linear space spanned by $R$, $B$ is called a \emph{base} for $R$ if $B$ is a $\Q$-basis for $V$ and $R \subseteq \Sp_{\Z_{\ge 0}}B \cup \Sp_{\Z_{\le 0}}B$.
The root space decomposition \eqref{g:rootspacedecomposition} implies that $\Pi$ is a base for $\Phi$. 
Owing to Lemma \ref{lem:Vsigma:base}, $\ol \Pi$ is a base for $\ol \Phi$. \hfill \rmkend
\end{rmk}

It follows from Lemma \ref{lem:Vsigma:base} that the restricted rank of $\si$ is the number of $\tau$-orbits outside $X$; hence we will also call this number the restricted rank of $(X,\tau)$.
We refer to the elements $\ol{\al_i}$ as \emph{simple restricted roots}.
For all $\be \in \ol\Phi^+$ there exist unique $m_i \in \Z_{\ge 0}$ such that $\be = \sum_{i \in I^*} m_i \ol{\al_i}$ and we call $\sum_{i \in I^*} m_i \in \Z_{\ge 0}$ the \emph{restricted height} of $\be$.
Note that $\be$ has restricted height 1 if and only if $\be = \ol{\al_i}$ for some $i \in I^*$.

Knowledge of the signs of the inner products between simple restricted roots will allow us to study the action of the reflections with respect to the $\ol{\al_i}$ on $\ol\Phi$.
For this purpose, recall the notation $X[i] = X \cup \{ i, \tau(i) \}$ for $i \in I \backslash X$.
It suffices to take $i \in I^*$.
It will turn out to matter greatly whether $X[i]$ is of finite type, so we define
\eq{
\wt I = \{ i \in I^* \, : \, X[i] \text{ is of finite type} \}.
}

\begin{exam}
Let $(X,\tau)$ be a compatible decoration.
If $I$ is of finite type then $\wt I = I^*$.
If $I$ is of affine type then $I^*$ is nonempty (since $X \ne I$) and there are two possibilities:
if $|I^*|=1$ then $X[i]=I$ for $i \in I^*$ and hence $\wt I = \emptyset$; if $|I^*|> 1$ then $\wt I = I^*$. \hfill \examend
\end{exam}

\begin{lemma} \label{lem:innerproducts}
Let $i,j \in I^*$.
Then $(\ol{\al_i},\ol{\al_j}) > 0$ if and only if $i=j \in \wt I$.
\end{lemma}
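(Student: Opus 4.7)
The plan is to split into the off-diagonal case $i \ne j$ and the diagonal case $i = j$, using different techniques in each.

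For $i \ne j$ I would compute $(\ol{\al_i}, \ol{\al_j})$ by expanding $\ol{\al_j}$. Applying Lemma \ref{lem:Xtau:basics}\ref{lem:Xtau:basics:wX:formula} to $\tau(j)$, and observing that $v_{\tau(j),k} = v_{jk}$ since $\tau^2 = \id_I$, one obtains
\eq{
\ol{\al_j} = \tfrac{1}{2}\Big(\al_j + \al_{\tau(j)} + \sum_{k \in X} v_{jk} \al_k\Big)
}
with all $v_{jk} \in \Z_{\ge 0}$. Because $\ol{\al_i}$ is the orthogonal projection of $\al_i$ onto $V^\si$, we have $(\ol{\al_i}, \ol{\al_j}) = (\al_i, \ol{\al_j})$. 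Since $i$ and $j$ are distinct representatives of $\tau$-orbits in $I \setminus X$, the index $i$ differs from each of $j, \tau(j)$ and from every $k \in X$, so each term $(\al_i, \al_l) = \eps_i a_{il}$ appearing in the expansion is nonpositive. This yields $(\ol{\al_i}, \ol{\al_j}) \le 0$.

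For $i = j$ I would work inside the subspace $V_{X[i]}$ spanned by $\{\al_l\}_{l \in X[i]}$. That $\si$ preserves $V_{X[i]}$ follows from $\tau(X[i]) = X[i]$ combined with the explicit form of $w_X(\al_l) \in V_X + \Q \al_l$ for $l \in X[i]$. Since $\ol{\al_l} = 0$ for $l \in X$ and $\ol{\al_{\tau(i)}} = \ol{\al_i}$ by \eqref{bar:tau}, the restriction of $\ol{\phantom{x}}$ to $V_{X[i]}$ has image $\Q \ol{\al_i}$, so $(V_{X[i]})^\si = \Q \ol{\al_i}$ is one-dimensional and nonzero. The sign of $(\ol{\al_i}, \ol{\al_i})$ is then read off from Lemma \ref{lem:restrictedsize} applied with $J = X[i]$: if $i \in \wt I$ then $X[i]$ is of finite type and part \ref{lem:restrictedsize:finite} forces $(\ol{\al_i}, \ol{\al_i}) > 0$; if $i \notin \wt I$ then $X[i]$ is of infinite type and, since $\ol{\al_i} \in \ol{\Phi_{X[i]}}$, the one-dimensional clause of part \ref{lem:restrictedsize:infinite} gives $(\ol{\al_i}, \ol{\al_i}) \le 0$.

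Combining these cases yields $(\ol{\al_i}, \ol{\al_j}) > 0$ if and only if $i = j$ with $X[i]$ of finite type, which is exactly the condition $i = j \in \wt I$. The main subtlety will be the diagonal infinite-type case, where one must verify both the $\si$-stability of $V_{X[i]}$ and the one-dimensionality of $(V_{X[i]})^\si$ in order to invoke the one-dimensional clause of Lemma \ref{lem:restrictedsize}\ref{lem:restrictedsize:infinite} and transfer its conclusion to $\ol{\al_i}$ itself.
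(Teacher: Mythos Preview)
Your proposal is correct and follows essentially the same route as the paper: the off-diagonal case is handled by expanding $\ol{\al_j}$ via Lemma~\ref{lem:Xtau:basics}\ref{lem:Xtau:basics:wX:formula} and using nonpositivity of the off-diagonal inner products, while the diagonal case is reduced to Lemma~\ref{lem:restrictedsize} applied with $J = X[i]$ after observing that $\si$ stabilizes $V_{X[i]}$ with one-dimensional fixed subspace. You supply more detail than the paper on verifying the $\si$-stability of $V_{X[i]}$ and the one-dimensionality of $(V_{X[i]})^\si$, which the paper simply asserts at the outset of its proof.
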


\begin{proof}
We split up the proof into some casework.
We will repeatedly use that $\si$ stabilizes $V_{X[i]}$ for any $i \in I^*$ and is of restricted rank 1 as a root system involution of $V_{X[i]}$.
\begin{description}
\item[$i=j \in \wt I$]
In this case $X[i]$ is of finite type and by Lemma \ref{lem:restrictedsize} \ref{lem:restrictedsize:finite} we obtain $(\ol{\al_i},\ol{\al_i}) > 0$.
\item[$i=j \notin \wt I$]
Now $X[i]$ is of infinite type and $(\ol{\al_i},\ol{\al_j}) \le 0$ by Lemma \ref{lem:restrictedsize} \ref{lem:restrictedsize:infinite}.
\item[$i \ne j$]
In this case $i \notin X[j]$.
Recalling Lemma \ref{lem:Xtau:basics} \ref{lem:Xtau:basics:wX:formula} one has $\si(\al_j) = \al_{\tau(j)} + \sum_{j \in X} v_{ij} \al_j$ with $v_{ij} \in \Z_{\ge 0}$.
Hence
\eq{
2(\ol{\al_i},\ol{\al_j}) = (\al_i, \al_j + \si(\al_j)) = (\al_i,\al_j)+(\al_i,\al_{\tau(j)})+\sum_{k \in X} v_{ij} (\al_i,\al_k) \le 0,
}
as required. \qedhere
\end{description}
\end{proof}

It follows that, if $\wt I$ is a proper subset of $I^*$, there are imaginary simple restricted roots, as there are for Borcherds' generalized Kac-Moody algebras \cite{Bo88}. 

We close this subsection with a description of the orthogonal complement $V^{-\si}$.

\begin{lemma} \label{lem:Vtheta}
Given a compatible decoration $(X,\tau)$, let $\si = w_X \circ \tau$ be the corresponding root system involution. 
Then
\eq{
V^{-\si} = \Sp_\Q(\{ \al_i \}_{i \in X} \cup \{ \al_i - \al_{\tau(i)} \, : \, \tau(i) \ne i \}_{i \in I^*})
}
and $\Phi \cap V^{-\si} = \Phi_X$.
\end{lemma}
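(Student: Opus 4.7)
The plan is to prove the two statements separately, treating each as a containment argument combined with a dimension count for the first.

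For the first statement, I will first verify that each of the proposed generators lies in $V^{-\si}$. For $\al_i$ with $i \in X$: since $\tau|_X = \oi_X$ and $w_X(\al_{\oi_X(i)}) = -\al_i$, we get $\si(\al_i) = w_X(\al_{\tau(i)}) = -\al_i$, so $\al_i \in V^{-\si}$; more generally $V_X \subseteq V^{-\si}$. For $\al_i - \al_{\tau(i)}$ with $i \in I^*$ and $\tau(i) \ne i$: using the formula \eqref{wX:formula} from Lemma \ref{lem:Xtau:basics} \ref{lem:Xtau:basics:wX:formula}, the coefficients satisfy $v_{ij} = -(\al_i + \al_{\tau(i)})(\ka^\vee_j)$, which is manifestly invariant under swapping $i$ with $\tau(i)$. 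Hence $\si(\al_i) - \si(\al_{\tau(i)}) = w_X(\al_{\tau(i)}) - w_X(\al_i) = \al_{\tau(i)} - \al_i$, as required.

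Next I would match dimensions. By Lemma \ref{lem:Vsigma:base} the restricted rank equals $|I^*|$, so $\dim V^\si = |I^*|$ and hence $\dim V^{-\si} = |I| - |I^*|$. On the other hand, partitioning $I = X \sqcup (I \backslash X)$ and using that $\tau$-orbits outside $X$ are either singletons (one element each) or pairs (two elements, one lying in $I^*$), one has $|I \backslash X| = |I^*| + \#\{ i \in I^* : \tau(i) \ne i\}$. Therefore the number of proposed generators is exactly $|X| + \#\{i \in I^* : \tau(i) \ne i\} = |I| - |I^*|$. Linear independence of the generators follows at once by expanding in the basis $\Pi$: the elements $\al_i - \al_{\tau(i)}$ for distinct $i \in I^*$ with $\tau(i) \ne i$ involve pairwise disjoint pairs of simple roots outside $X$, so their coefficients vanish first in any linear relation, after which independence of $\{\al_i\}_{i \in X}$ kills the remaining terms.

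For the second statement, the inclusion $\Phi_X \subseteq \Phi \cap V^{-\si}$ is immediate from $V_X \subseteq V^{-\si}$ shown above. Conversely, $V^{-\si}$ is precisely the kernel of the projection $\ol{\phantom{x}}: V \to V^\si$, so if $\al \in \Phi \cap V^{-\si}$ then $\ol\al = 0$. By the observation preceding \eqref{barSigma:X} (that $\ol\al = 0$ if and only if $\al \in \Phi_X$), we conclude $\al \in \Phi_X$.

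There is no substantive obstacle here; the only subtle point is the verification that $\al_i - \al_{\tau(i)} \in V^{-\si}$, which relies on the symmetry $v_{ij} = v_{\tau(i),j}$ inherent in the explicit expression from Lemma \ref{lem:Xtau:basics} \ref{lem:Xtau:basics:wX:formula}. Everything else is bookkeeping with the orthogonal decomposition \eqref{V:decomposition} and counting $\tau$-orbits.
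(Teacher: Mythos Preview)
Your argument for the first statement is correct and proceeds differently from the paper: you verify that each proposed generator lies in $V^{-\si}$ and then close the gap by a dimension count via Lemma~\ref{lem:Vsigma:base}, whereas the paper establishes the opposite containment directly by writing any $\be \in V^{-\si}$ as $\tfrac{1}{2}(\be - \si(\be))$ and observing that $\al_i - \si(\al_i) = \al_i - w_X(\al_{\tau(i)})$ equals $\al_i - \al_{\tau(i)}$ modulo $V_X$. Your route is a little less self-contained (it imports $\dim V^\si = |I^*|$ from an earlier lemma) but perfectly valid, and your use of the symmetry $v_{ij} = v_{\tau(i),j}$ is a clean way to see that $\al_i - \al_{\tau(i)} \in V^{-\si}$.

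Your argument for the second statement, however, is circular. The assertion preceding \eqref{barSigma:X} that ``$\ol\al = 0$ if and only if $\al \in \Phi_X$'' is, after identifying $V^{-\si}$ with $\ker(\ol{\phantom{x}})$, \emph{precisely} the statement $\Phi \cap V^{-\si} = \Phi_X$ that you are asked to prove; the paper states that observation without proof, and the present lemma is where it is actually justified. To repair this you need an independent argument. The paper's one-line proof invokes $\Phi = \Phi^+ \cup (-\Phi^+)$ together with the first statement: if $\al \in \Phi^+ \cap V^{-\si}$, then expanding $\al$ in the basis you just constructed, the coefficients of $\al_i$ and $\al_{\tau(i)}$ (for $i \in I^*$, $\tau(i) \ne i$) are negatives of one another, hence both zero since $\al \in Q^+$; thus $\al \in V_X \cap \Phi = \Phi_X$. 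Alternatively, argue directly that for $\al \in \Phi^+ \backslash \Phi_X$ one has $\tau(\al) \in \Phi^+ \backslash \Phi_X$ and hence $\si(\al) = w_X(\tau(\al)) \in \Phi^+$ (because $w_X \in W_X$ permutes $\Phi^+ \backslash \Phi_X^+$), so $\si(\al) \ne -\al$.
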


\begin{proof}
Let $\be \in V^{-\si}$.
Then there exist $m_i \in \Q$ such that $\be = \sum_{i \in I} m_i \al_i$. 
It follows that $\be$ equals
\eq{
\frac{\be-\si(\be)}{2} = \sum_{i \in I} m_i \frac{\al_i-\si(\al_i)}{2} = \sum_{i \in X} m_i \al_i + \sum_{i \in I \backslash X} \frac{m_i}{2} (\al_i - w_X(\al_{\tau(i)})).
}
Here, the element $\al_i - w_X(\al_{\tau(i)})$ equals $\al_i-\al_{\tau(i)}$ plus a term in $\Sp_\Q\{ \al_i \}_{i \in X}$, so we obtain the first statement.
The second statement follows from the fact that $\Phi = \Phi^+ \cup (-\Phi^+)$.
\end{proof}


\subsection{The Weyl group of the restricted root system}

Let $\be \in V$ such that $(\be,\be)>0$.
Consider the idempotent linear map $p_\be:V \to V$ defined by $p_\be(\ga) = \frac{(\be,\ga)}{(\be,\be)} \be$ for all $\ga \in V$; it is idempotent.
Consider the involution
\eq{
s_\be := \id - 2p_\be.
}
Note that the orthogonal decomposition $V = \Q \be \oplus \{ \la \in V \, : \, (\la,\be)=0 \}$ diagonalizes $s_\be$ and we refer to $s_\be$ as the \emph{orthogonal reflection} associated to $\be$, respectively.
Because $p_\be$ is self-adjoint, so is $s_\be$; we conclude that $s_\be$ is an isometry. 
Finally, $s_\ga=s_\be$ for all $\ga \in \Q_{\ne 0}\be$.

If in addition $\be \in V^\si$ for some root system involution $\si$ then one easily checks that $p_\be$ commutes with $\si$, so that $s_\be$ commutes with $\si$.
Hence $s_\be$ stabilizes $V^\si$ and is therefore an isometry of $V^\si$.
If $f$ is any isometry of $V^\si$ then
\eq{ \label{restrictedreflection:isometry}
f \circ s_\be \circ f^{-1} = s_{f(\be)}.
}

\begin{defn}
The \emph{Weyl group of $\ol\Phi$} is the following group:
\eq{
W(\ol{\Phi}) := \big\langle \{ s_\be|_{V^\si} \, : \, \be \in \ol\Phi, \, (\be,\be)>0 \} \big\rangle \le \GL(V^\si).
}
We call the elements $\ol{s_i} := s_{\ol{\al_i}}|_{V^\si}$ with $i \in \wt I$ \emph{simple restricted reflections}. \hfill \defnend
\end{defn}

The following basic property will later help us show that the set $\{ \ol{s_i} \}_{i \in \wt I}$ generates $W(\ol\Phi)$.

\begin{lemma} \label{lem:simpleresrefl:stable}
For all $i \in \wt I$, $\ol{s_i}$ stabilizes $\ol{\Phi^+} \backslash \Q \ol{\al_i}$.
\end{lemma}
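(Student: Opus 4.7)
The plan is to find a Weyl-group element which lifts $\ol{s_i}$ to an action on $\Phi^+$, then transfer positivity from $\Phi^+$ to $\ol{\Phi^+}$ via the projection. Specifically, let $\be \in \ol{\Phi^+} \setminus \Q \ol{\al_i}$, written as $\be = \ol\la$ with $\la \in \Phi^+ \setminus \Phi_X$, and expand $\be = \sum_{j \in I^*} m_j \ol{\al_j}$ via Lemma \ref{lem:Vsigma:base}; some $m_j$ with $j \ne i$ is strictly positive. The formula $\ol{s_i}(\be) = \be - c\, \ol{\al_i}$ with $c := 2(\ol{\al_i}, \be)/(\ol{\al_i}, \ol{\al_i})$ preserves the coefficients of $\ol{\al_j}$ for $j \ne i$, so by the base property of $\ol\Pi$ in $\ol\Phi$ (Lemma \ref{lem:Vsigma:base}) it suffices to exhibit $\ol{s_i}(\be)$ as $\ol{\la'}$ for some $\la' \in \Phi^+$.

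Since $i \in \wt I$, $X[i]$ is of finite type and has restricted rank one, so by Lemma \ref{lem:restrictedsize}\ref{lem:restrictedsize:finite} we have $\ol{\Phi_{X[i]}} \subseteq \Q\ol{\al_i}$; in particular, $\be \notin \Q\ol{\al_i}$ forces $\la \in \Phi^+ \setminus \Phi_{X[i]}$. I take $\la' := w_{X[i]}(\la)$, where $w_{X[i]}$ is the longest element of the finite parabolic Coxeter group $W_{X[i]}$. The standard fact that $W_{X[i]}$ permutes $\Phi^+ \setminus \Phi_{X[i]}$ bijectively then yields $\la' \in \Phi^+ \setminus \Phi_X$ and hence $\ol{\la'} \in \ol{\Phi^+}$. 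Furthermore, since $w_{X[i]}$ acts trivially on $V_{X[i]}^\perp$ and $\si$ preserves $V_{X[i]}$, the difference $\la' - \la$ lies in $V_{X[i]}$ and its $\ol{\phantom{x}}$-image lies in $V^\si \cap V_{X[i]} = \Q \ol{\al_i}$, giving $\ol{\la'} = \be + t\ol{\al_i}$ for a unique $t \in \Q$.

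Pairing with $\ol{\al_i}$ and invoking the self-adjointness of the isometric involution $w_{X[i]}$ yields $t\, (\ol{\al_i}, \ol{\al_i}) = (\la' - \la, \ol{\al_i}) = (\la, w_{X[i]}(\ol{\al_i}) - \ol{\al_i})$, so the entire argument reduces to the single identity $w_{X[i]}(\ol{\al_i}) = -\ol{\al_i}$. Since $w_{X[i]}|_{V_{X[i]}} = -\oi_{X[i]}$, this amounts to showing $\oi_{X[i]}(\ol{\al_i}) = \ol{\al_i}$; from the formula $\ol{\al_i} = \tfrac{1}{2}(\al_i + w_X\al_{\tau(i)})$ this follows once $\oi_{X[i]}$ preserves both $X \subseteq X[i]$ and the set $\{i,\tau(i)\}$, as either fixing $i, \tau(i)$ individually or swapping them yields $\ol{\al_i}$ by \eqref{bar:tau}. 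I expect the combinatorial claim $\oi_{X[i]}(X) = X$ to be the main obstacle; it is where the generalized Satake hypothesis is indispensable. Via Lemma \ref{lem:Xtau:basics}\ref{lem:Xtau:basics:connected} the question reduces to each connected component $Z$ of $X[i]$, and a case-by-case check over the finite-type Dynkin diagrams with nontrivial opposition (types ${\sf A}_n$, ${\sf D}_n$ with $n$ odd, and ${\sf E}_6$) shows that the compatibility condition $\tau|_{X \cap Z} = \oi_{X \cap Z}$ together with the exclusion of ${\sf A}_2$ at odd nodes leaves only configurations in which $\oi_Z$ preserves $X \cap Z$. Granting this, $t = -c$ and hence $\ol{s_i}(\be) = \ol{\la'} \in \ol{\Phi^+}$.
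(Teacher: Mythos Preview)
Your argument works, but — as you explicitly flag — only under the generalized Satake hypothesis, via the identity $\oi_{X[i]}(X)=X$. The lemma as stated in the paper does not assume this, and in fact it fails without it: for $\mfg$ of type ${\sf A}_3$ with $(X,\tau)=(\{2\},\id)$ one computes $\ol{s_1}(\ol{\al_3}) = \ol{\al_3} + \tfrac{2}{3}\ol{\al_1} \notin \ol\Phi$. The paper's own proof is the short Humphreys-type argument: $\ol{s_i}$ changes only the $\ol{\al_i}$-coefficient in the expansion $\be = \sum_j m_j \ol{\al_j}$, a positive $m_j$ with $j\ne i$ survives, and then Lemma \ref{lem:Vsigma:base} is invoked to force $\ol{s_i}(\be) \in \ol{\Phi^+}$. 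That last step tacitly uses $\ol{s_i}(\be) \in \ol\Phi$, which is exactly the missing ingredient and is only available (via \eqref{tildesi:barsi} or your lifting) under the generalized Satake condition.

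So your route is genuinely different and in a sense more honest: rather than relying on the base structure of $\ol\Pi$ alone, you lift to $w_{X[i]}$ acting on $\Phi$, use that $W_{X[i]}$ permutes $\Phi^+ \setminus \Phi_{X[i]}$, and push back down, thereby directly exhibiting $\la' \in \Phi^+$ with $\ol{\la'} = \ol{s_i}(\be)$. The price is the combinatorial check $\oi_{X[i]}(X)=X$, which is precisely condition \eqref{prop:tildesi:eq3} in the proof of Proposition \ref{prop:tildeW} and is established there as equivalent to the generalized Satake condition; your case-by-case verification reproduces that part of the proposition. One small addendum: to pass from $\oi_{X[i]}(\{i,\tau(i)\})=\{i,\tau(i)\}$ to $\oi_{X[i]}(\ol{\al_i})=\ol{\al_i}$ via the formula $\ol{\al_i}=\tfrac{1}{2}(\al_i+w_X\al_{\tau(i)})$, you also need $\oi_{X[i]}$ to commute with $w_X$ on $V_{X[i]}$; this follows at once from $\oi_{X[i]}(X)=X$ since then $\oi_{X[i]}(w_X)=w_{\oi_{X[i]}(X)}=w_X$.
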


\begin{proof}
This is a variation on a standard argument, see e.g.~\cite[Prop.\ 1.4]{Hu90}.
Let $i \in \wt I$ be arbitrary and let $\be \in \ol{\Phi^+} \backslash \Q \ol{\al_i}$.
By Lemma \ref{lem:Vsigma:base}, there exist $m_j \in \Z_{\ge 0}$ such that
\eq{
\be = \sum_{j \in I^*} m_j \ol{\al_j}.
}
Since $\be \notin \Q \ol{\al_i}$ there exists $j \in I^* \backslash \{ i \}$ such that $m_j \in \Z_{>0}$.
At the same time $\ol{s_i}(\be) = \be - 2 \frac{(\ol{\al_i},\be)}{(\ol{\al_i},\ol{\al_i})} \ol{\al_i}$ with the same coefficient $m_j$.
By the second statement of Lemma \ref{lem:Vsigma:base}, $\ol{s_i}(\be) \in \ol{\Phi^+}$.
Finally, if $\ol{s_i}(\be) \in \Q \ol{\al_i}$ then applying $\ol{s_i}$ produces a contradiction with $\be \notin \Q \ol{\al_i}$.
\end{proof}


\subsection{The group $W^\si$ and the restricted Weyl group $\ol W$}

For any linear map $\psi: V \to V$ and any subset $S \subseteq W$ we may consider the following subset of $W$:
\eq{
S^\psi = \{ w \in S \, : \, w \circ \psi = \psi \circ w \text{ as linear maps on } V \}.
}
Given a root system automorphism $\si$, for all $w \in W^\si$, $w$ stabilizes $V^\si$ and we may consider $w|_{V^\si}$.

\begin{defn}[{\cite{Sch69,He84}}]
Given a root system involution $\si$ of $\Phi$, we define the \emph{restricted Weyl group} as
\begin{flalign*}
&& \ol W := \{ w|_{V^\si} \, : \, w \in W^\si \}. && \hfill \defnend
\end{flalign*}
\end{defn}

We may consider the groups $W(\ol \Phi)$ and $\ol W$ for any root system involution.
It is natural to expect that for suitable $\si$ they are isomorphic. 
In the remainder of this section we will always assume 
\eq{
\si = w_X \circ \tau
}
for some compatible decoration $(X,\tau)$.
We will show in the remainder of this section that for certain compatible decorations both $W(\ol \Phi)$ and $\ol W$ are isomorphic to a third combinatorially defined group $\wt W \le W^\si$ (and hence to each other, as desired).
In order to do this we develop some more properties of $W^\si$ and $\ol W$.
Note that
\eq{ \label{Wsigma:alt}
W^\si = \{ w \in W \, : \, w_X w = \tau(w) w_X \}
}
and that any $w \in W_X$ satisfies $w_X w w_X = \oi_X(w) = \tau(w)$, so that $W_X \le W^\si$.

\begin{lemma} \label{lem:wX:kernel}
The group $W_X$ is the kernel of the homomorphism $\cdot|_{V_\si} : W^\si \to \ol W$.
\end{lemma}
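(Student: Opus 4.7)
The plan is to prove the set equality by establishing both inclusions separately, with the forward inclusion $W_X \subseteq \ker(\cdot|_{V^\si})$ following from a direct computation and the reverse inclusion requiring an inversion-set analysis.

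For the forward direction, I would first verify that $V_X \subseteq V^{-\si}$: since $(X,\tau)$ is a compatible decoration, for $i \in X$ one has $\tau(\al_i) = \al_{\oi_X(i)}$ and $w_X(\al_{\oi_X(i)}) = -\al_i$, so $\si(\al_i) = -\al_i$. Orthogonality of the decomposition $V = V^\si \oplus V^{-\si}$ then gives $(\la,\al_i) = 0$ for all $\la \in V^\si$ and $i \in X$, whence $s_i(\la) = \la$. Combined with the already noted inclusion $W_X \le W^\si$, this yields $W_X \subseteq \ker(\cdot|_{V^\si})$.

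For the reverse direction, let $w \in W^\si$ satisfy $w|_{V^\si} = \id_{V^\si}$, and consider the inversion set $\Phi^+(w) := \Phi^+ \cap w^{-1}(-\Phi^+)$. The key step is to show $\Phi^+(w) \subseteq \Phi^+_X$. Fix $\al \in \Phi^+(w)$ and decompose $\al = \al_+ + \al_-$ with $\al_\pm \in V^{\pm\si}$. Since $w$ commutes with $\si$ and fixes $V^\si$ pointwise, it stabilizes $V^{-\si}$, and thus $\ol\al = \al_+$ while $\ol{-w(\al)} = -\al_+$. Because $\al$ and $-w(\al)$ both lie in $\Phi^+$, their projections $\al_+$ and $-\al_+$ both belong to $\ol{\Phi^+} \cup \{0\}$. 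By Lemma \ref{lem:Vsigma:base}, $\ol{\Phi^+} \subseteq \Sp_{\Z_{\ge 0}}(\ol\Pi) \setminus \{0\}$, and linear independence of $\ol\Pi$ forces $\ol{\Phi^+} \cap (-\ol{\Phi^+}) = \emptyset$. Hence $\al_+ = 0$, meaning $\al \in \Phi \cap V^{-\si}$, which by Lemma \ref{lem:Vtheta} equals $\Phi_X$, so $\al \in \Phi^+_X$.

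Finally, I would invoke the standard Coxeter-theoretic characterisation of parabolic subgroups: for the Coxeter system $(W,\{s_i\}_{i \in I})$ associated to a Kac-Moody algebra, one has $W_X = \{w \in W : \Phi^+(w) \subseteq \Phi^+_X\}$, which follows by a length induction in which one peels off a simple reflection $s_i$ with $\al_i \in \Phi^+(w) \cap \Pi \subseteq \Pi_X$. The main obstacle is the middle paragraph, specifically the argument that $\ol{\Phi^+}$ and $-\ol{\Phi^+}$ are disjoint and that this forces the $V^\si$-component of each inverted root to vanish; once $\Phi^+(w) \subseteq \Phi^+_X$ is secured, the conclusion $w \in W_X$ is immediate.
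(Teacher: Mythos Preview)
Your proof is correct and takes a genuinely different route from the paper's.

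The paper defers to Heck's argument \cite[Prop.~3.1]{He84}, which rests on the Bourbaki chamber-stabilizer result \cite[Ch.~V, \S 3.3, Prop.~1]{Bo68}: the pointwise stabilizer in $W$ of a subset of the closed fundamental chamber is generated by the simple reflections fixing that subset. Applied (via the Tits-cone extension in \cite{Lo80}) to a suitably generic point of $V^\si$, and combined with Lemma~\ref{lem:Vtheta} to identify the simple reflections orthogonal to $V^\si$ as $\{s_i\}_{i \in X}$, this yields the reverse inclusion.

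Your argument bypasses the chamber geometry entirely. You show directly that any $w \in W^\si$ fixing $V^\si$ pointwise has inversion set $\Phi^+(w) \subseteq \Phi^+_X$: the key observation is that for $\al \in \Phi^+(w)$ both $\ol\al$ and $-\ol\al$ land in $\ol{\Phi^+} \cup \{0\} \subseteq \Sp_{\Z_{\ge 0}}(\ol\Pi)$ by Lemma~\ref{lem:Vsigma:base}, forcing $\ol\al = 0$ and hence $\al \in \Phi \cap V^{-\si} = \Phi_X$ by Lemma~\ref{lem:Vtheta}. You then finish with the standard Coxeter-theoretic characterisation $W_X = \{w \in W : \Phi^+(w) \subseteq \Phi^+_X\}$, valid for arbitrary Coxeter systems. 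This approach is more combinatorial and more self-contained within the paper's own framework, using only Lemmas~\ref{lem:Vsigma:base} and~\ref{lem:Vtheta} rather than external structural results about the Tits cone; the paper's route is terser to state but leans on heavier machinery.
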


\begin{proof}
The proof of \cite[Prop.~3.1]{He84}, which relies on \cite[Ch.~V, \S 3.3, Prop.~1]{Bo68} and Lemma \ref{lem:Vtheta}, immediately generalizes to the Kac-Moody setting.
Also cf. \cite[Thm. 1.3]{Lo80}.
\end{proof}

As a consequence of Lemma \ref{lem:wX:kernel}, $W_X$ is a normal subgroup of $W^\si$ and 
\eq{ \label{Wsigma:semidirect}
W^\si \cong W_X \rtimes \ol W.
}
Consider the set of \emph{minimal left coset representatives} of $W_X$:
\eq{
W^X = \{ w \in W \, : \, \forall j \in X \; \ell(s_j w) > \ell(w) \}.
}
Because $\tau$ permutes $\{ s_j \}_{j \in X}$ and preserves the length function on $W$, the set $W^X$ is also $\tau$-stable.
By the natural generalization of the arguments in e.g.~\cite[Prop. 1.10]{Hu90} to infinite Coxeter groups, one has
\eq{ \label{W:cosetX:factorization}
W = W_X \cdot W^X, \qq W_X \cap W^X = \{ 1 \},
}
i.e. for all $w \in W$ there exists a unique $(u,v) \in W_X \times W^X$ such that $w=uv$.
Since $\si$ fixes $W_X$ pointwise, we deduce
\eq{ \label{Wsigma:cosetX:factorization}
W^\si = W_X \cdot (W^X)^\si, \qq W_X \cap (W^X)^\si = \{ 1 \}.
}
For two subsets $S,S' \subseteq W$, we write $N_S(S') = \{ w \in S \, | \, wS' = S'w \}$ for the normalizer of $S'$ in $S$.
We will now show that $(W^X)^\si$ can be identified with the $\tau$-fixed point subset in 
\eq{
\mc W := N_{W^X}(W_X).
}

\begin{lemma} \label{lem:Wtau:cosetX}
$(W^X)^\si = \mc W^\tau$.
\end{lemma}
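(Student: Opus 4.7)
The plan is to reformulate each side as an explicit condition on $w \in W^X$ and then match them. By \eqref{Wsigma:alt} (and $w_X^2=1$), $w \in (W^X)^\si$ amounts to $w \in W^X$ together with $w w_X = w_X \tau(w)$, while $w \in \mc W^\tau$ amounts to $w \in W^X$ with $\tau(w)=w$ and $w W_X w^{-1} = W_X$.

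The first step is an auxiliary rigidity claim: for $w \in W^X$, the normalizer condition $w W_X w^{-1} = W_X$ in fact forces $w(\Pi_X) = \Pi_X$. Indeed, combining $w(\Phi_X) = \Phi_X$ with $w(\Pi_X) \subseteq \Phi^+$ gives $w(\Pi_X) \subseteq \Phi_X^+$; since $w$ restricts to a bijection of $\Phi_X$ onto itself sending $\Pi_X$ (the unique base of $\Phi_X$ inside $\Phi_X^+$) into $\Phi_X^+$, a standard uniqueness argument yields $w(\Pi_X) = \Pi_X$. Consequently $w$ permutes $\{s_j\}_{j \in X}$ by conjugation, and in particular $w w_X w^{-1} = w_X$. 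Given this, the inclusion $\mc W^\tau \subseteq (W^X)^\si$ is immediate: $w w_X = w_X w = w_X \tau(w)$, so $w \in W^\si$ by \eqref{Wsigma:alt}.

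For the reverse inclusion, let $w \in (W^X)^\si$ and evaluate $w w_X = w_X \tau(w)$ on $\al_j$ for $j \in X$. The left-hand side equals $-w(\al_{\tau(j)})$, a negative root because $\tau(j) \in X$ and $w \in W^X$. The root $\tau(w)(\al_j)$ appearing on the right-hand side is positive since $\tau(w) \in W^X$. The key observation is that $w_X$ sends each root in $\Phi^+ \setminus \Phi_X^+$ to another root in $\Phi^+ \setminus \Phi_X^+$ (it merely adds terms from $Q_X$ and preserves heights in the $I \setminus X$ direction), while it inverts signs only on $\Phi_X^+$. Hence the right-hand side can be negative only if $\tau(w)(\al_j) \in \Phi_X^+$. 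Running over $j \in X$ produces $\tau(w)(\Pi_X) \subseteq \Phi_X$, hence $\tau(w)(V_X) = V_X$ by a dimension count and therefore $\tau(w)(\Phi_X) = \Phi_X$, i.e.\ $\tau(w) \in \mc W$; by $\tau$-stability of $\mc W$ also $w \in \mc W$. The rigidity step then yields $w w_X = w_X w$, and comparing with $w w_X = w_X \tau(w)$ forces $\tau(w) = w$, so $w \in \mc W^\tau$.

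The crux of the argument is the reverse inclusion, and more specifically the step extracting $\tau(w) \in \mc W$ from the intertwining identity: it uses in an essential way that $w_X$ detects membership in $\Phi_X^+$ by changing signs there, while acting without sign change on the complementary positive roots. The rigidity step for $w(\Pi_X)$ is standard but is what couples the combinatorial normalizer condition to conjugation-fixing of $w_X$.
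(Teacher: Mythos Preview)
Your strategy is sound, but there is a recurring slip about which side of $w$ the condition $w \in W^X$ controls. With the paper's definition $W^X = \{w : \ell(s_j w) > \ell(w)\ \forall j \in X\}$, membership $w \in W^X$ is equivalent to $w^{-1}(\Pi_X) \subset \Phi^+$, \emph{not} to $w(\Pi_X) \subset \Phi^+$. In your rigidity step this is harmless: since $\mc W$ is a group (stated just before the lemma) you also have $w^{-1} \in W^X$, which does give $w(\Pi_X) \subset \Phi^+$; but you should say so rather than invoking ``$w \in W^X$''.

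In the inclusion $(W^X)^\si \subseteq \mc W^\tau$ this becomes a genuine gap. You claim $w(\al_{\tau(j)})>0$ and $\tau(w)(\al_j)>0$ ``because $w,\tau(w) \in W^X$'', but neither follows, and at this point you do not yet know $w^{-1} \in W^X$. The fix is to run the same sign analysis on the inverses: rewriting $w w_X = w_X \tau(w)$ as $\tau(w)^{-1}(\al_j) = -w_X\big(w^{-1}(\al_{\tau(j)})\big)$ for $j \in X$, the left side is positive (now legitimately, from $\tau(w) \in W^X$) while $w^{-1}(\al_{\tau(j)}) \in \Phi^+$ (from $w \in W^X$); your key observation about $w_X$ then forces $w^{-1}(\al_{\tau(j)}) \in \Phi_X^+$, hence $w^{-1}(\Pi_X) \subset \Phi_X$ and $w \in \mc W$. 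After that your rigidity step and comparison with $w w_X = w_X \tau(w)$ finish as written.

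For comparison, the paper's proof avoids root-by-root analysis entirely. For $(W^X)^\si \subseteq \mc W^\tau$ it uses directly that $W_X \trianglelefteq W^\si$ (from Lemma~\ref{lem:wX:kernel}), so $w w_X = u w$ for some $u \in W_X$; matching this against $w_X \tau(w)$ via the unique factorisation $W = W_X \cdot W^X$ yields $\tau(w)=w$ and $u=w_X$ in one stroke. For $\mc W^\tau \subseteq (W^X)^\si$ it observes that conjugation by $w \in W^X$ on $W_X$ is length-nonincreasing (since $\ell(vw)=\ell(v)+\ell(w)$ for $v \in W_X$), hence length-preserving by finiteness of $W_X$, hence fixes $w_X$. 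Your rigidity argument gives the sharper conclusion $w(\Pi_X)=\Pi_X$, but the paper's length argument is shorter and sidesteps the $w$ versus $w^{-1}$ issue.
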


\begin{proof}
Let $w \in (W^X)^\si$ be arbitrary.
Because $w$ commutes with $\si$ and $W_X$ is normal,
\eq{
w_X \cdot \tau(w) = w \cdot w_X = u \cdot w
}
for some $u \in W_X$.
Since decompositions in $W = W_X \cdot W^X$ are unique, it follows that $\tau(w)=w$ so that $(W^X)^\si \subseteq W^\tau$.
Since $W_X$ is a normal subgroup of $W^\si$ we obtain $(W^X)^\si \subseteq \mc W^\tau$.

To prove the reverse inclusion, assume that $w \in \mc W^\tau$ so that $w W_X w^{-1} = W_X$.
For all $u \in W_X$ there exists a unique $v \in W_X$ such that $wu=vw$; since $w \in W^X$ we obtain $\ell(v) \le \ell(u)$.
On the other hand, $W_X$ is finite so conjugation by $w$ is surjective. 
It follows that $ww_X=w_Xw$.
Since $\tau(w)=w$, \eqref{Wsigma:alt} now implies that $w \in W^\si$.
\end{proof}

By a statement in \cite[25.1]{Lu03}, which relies on \cite[Prop. 1.5, Lemma 2.2]{Lu03}, the set $\mc W$ is a group.
Comparing \eqref{Wsigma:semidirect} and \eqref{Wsigma:cosetX:factorization}, making use of Lemma \ref{lem:Wtau:cosetX} we obtain

\begin{prop} \label{prop:barW}
$\ol W \cong \mc W^\tau$.
\end{prop}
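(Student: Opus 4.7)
The plan is to exhibit $\mc W^\tau$ directly as a complement of $W_X$ inside $W^\si$ and then read off the isomorphism with $\ol W$ from the restriction map already analyzed in Lemma \ref{lem:wX:kernel}. All of the necessary ingredients have already been established; the proof is essentially a matter of assembling them carefully.

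First I would introduce the group homomorphism
\[
\rho: W^\si \to \ol W, \qquad \rho(w) = w|_{V^\si},
\]
which is well-defined (elements of $W^\si$ stabilize $V^\si$) and surjective by the very definition of $\ol W$. By Lemma \ref{lem:wX:kernel}, $\ker \rho = W_X$. The strategy is to show that restricting $\rho$ to the subset $\mc W^\tau \subseteq W^\si$ (which is a subgroup, being the $\tau$-fixed-point subset of the group $\mc W$) produces a group isomorphism onto $\ol W$.

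For injectivity of $\rho|_{\mc W^\tau}$, I would use Lemma \ref{lem:Wtau:cosetX} to identify $\mc W^\tau = (W^X)^\si$, so that any element of the kernel of $\rho|_{\mc W^\tau}$ lies in $W_X \cap (W^X)^\si$; by \eqref{Wsigma:cosetX:factorization} this intersection is trivial. For surjectivity, given $\ol w \in \ol W$, pick any preimage $w \in W^\si$ under $\rho$, and apply the decomposition $W^\si = W_X \cdot (W^X)^\si$ from \eqref{Wsigma:cosetX:factorization} to write $w = uv$ with $u \in W_X$ and $v \in (W^X)^\si = \mc W^\tau$. Since $u \in \ker \rho$ we obtain $\rho(v) = \rho(w) = \ol w$, as required.

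No genuine obstacle arises here, since the hard work has been done in proving that $\mc W$ is a group (cited from \cite[25.1]{Lu03}), establishing the factorization \eqref{Wsigma:cosetX:factorization} of $W^\si$, and identifying $(W^X)^\si$ with $\mc W^\tau$. The only mild care is to note that $\mc W^\tau$ is a subgroup of $W$ rather than merely a subset, which is automatic once $\mc W$ is known to be a group since $\tau$ acts on it by group automorphisms.
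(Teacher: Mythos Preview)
Your proof is correct and follows essentially the same approach as the paper: both combine Lemma \ref{lem:wX:kernel}, the factorization \eqref{Wsigma:cosetX:factorization}, and Lemma \ref{lem:Wtau:cosetX} to identify $\mc W^\tau = (W^X)^\si$ as a complement of $W_X$ in $W^\si$, hence isomorphic to $W^\si/W_X \cong \ol W$. The paper's version is simply more terse, whereas you have spelled out the injectivity and surjectivity of $\rho|_{\mc W^\tau}$ explicitly.
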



\subsection{A combinatorial prescription of the simple restricted reflections: the group $\wt W$}

Recall that $W(\ol\Phi)$ was not defined as a subgroup of $W$.
We now discuss a particular subgroup $\wt W \le W$ defined in terms of explicit generators $\wt s_i$, see e.g.~\cite[\S 5]{Lu76}, \cite[Ch.~25]{Lu03} and \cite{GI14}, with the aim of having an isomorphic copy of $W(\ol\Phi)$ in $W$.
In particular we will strive to choose the $\wt s_i \in W$ so that $\wt s_i|_{V^\si} = \ol{s_i}$.
Having achieved this, we will obtain a straightforward proof that $W(\ol\Phi)$ is generated by the $\ol{s_i}$.

Note that the simple restricted reflection $\ol{s_i}$ fixes pointwise $\{ \be \in V^\si \, : \, (\be,\ol{\al_i}) =0 \}$.
The following lemma justifies selecting $\wt s_i$ from $W_{X[i]}$.

\begin{lemma} \label{lem:WextendedXfix}
Let $i \in I^*$.
If $w \in W_{X[i]}$ stabilizes $V^\si$ then $w$ fixes pointwise $\{ \be \in V^\si \, : \, (\be,\ol{\al_i}) =0 \}$.
\end{lemma}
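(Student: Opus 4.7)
The plan is to prove a slightly stronger statement: \emph{every} $w \in W_{X[i]}$ (without assuming it stabilizes $V^\si$) fixes the stated hyperplane. The strategy is to show that any $\be \in V^\si$ with $(\be,\ol{\al_i})=0$ is in fact orthogonal to the entirety of $V_{X[i]}$, from which the conclusion is immediate via the standard reflection formula.

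First, since $\si$ is an involutive isometry of $V$, the decomposition $V = V^\si \oplus V^{-\si}$ is orthogonal; in particular $V^\si \perp V^{-\si}$. Lemma \ref{lem:Vtheta} places $V_X \subseteq V^{-\si}$, so for every $\be \in V^\si$ one has $(\be,\al_j)=0$ for all $j \in X$. To handle the remaining generators $\al_i,\al_{\tau(i)}$ of $V_{X[i]}$, I would use the identity
\eq{
(\be,\ol\la) = \tfrac{1}{2}\big((\be,\la)+(\be,\si(\la))\big) = \tfrac{1}{2}\big((\be,\la)+(\si(\be),\la)\big) = (\be,\la) \qq \text{for all } \la \in V,
}
valid because $\be \in V^\si$ and $\si$ is an isometry. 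Taking $\la = \al_i$ and invoking the hypothesis gives $(\be,\al_i)=(\be,\ol{\al_i})=0$. Taking $\la = \al_{\tau(i)}$ and using $\ol{\al_{\tau(i)}}=\ol{\al_i}$ (a consequence of \eqref{bar:tau}) gives $(\be,\al_{\tau(i)})=0$.

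Combining these three statements, $\be$ is orthogonal to $\Pi_{X[i]}$, and hence to its $\Q$-span $V_{X[i]}$. Since $\la(h_j) = \eps_j^{-1}(\la,\al_j)$ by \eqref{form:def}, the simple reflection $s_j(\la) = \la - \la(h_j)\al_j$ fixes pointwise the hyperplane orthogonal to $\al_j$; in particular $s_j(\be)=\be$ for each $j \in X[i]$. As $W_{X[i]}$ is generated by these reflections, $w(\be)=\be$ for every $w \in W_{X[i]}$, which is the required conclusion.

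There is no substantive obstacle, since every ingredient is either already set up in the excerpt (the orthogonal decomposition, Lemma \ref{lem:Vtheta}, the identities \eqref{bar:wX}-\eqref{bar:tau}) or is a standard property of the reflection representation. The only minor point to be careful about is the translation between the condition $\be(h_j)=0$ defining the fixed hyperplane of $s_j$ and the orthogonality condition $(\be,\al_j)=0$, which is routine given \eqref{form:def}.
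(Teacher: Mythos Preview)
Your proof is correct and follows essentially the same approach as the paper's: show that $\be$ is orthogonal to each $\al_j$ with $j \in X[i]$, whence each generating reflection $s_j$ (and thus all of $W_{X[i]}$) fixes $\be$. Your observation that the hypothesis ``$w$ stabilizes $V^\si$'' is superfluous is also correct --- the paper's own argument does not use it either.
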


\begin{proof}
Recall that $W_{X[i]}$ is generated by $\{ s_j \}_{j \in X} \cup \{ s_i, s_{\tau(i)} \}$.
If $j \in X$ then $\ol{\al_j} =0$ so that the orthogonality of the decomposition \eqref{V:decomposition} implies that $s_j$ fixes $V^\si$ pointwise (and hence also the relevant subspace).
Now let $\be \in V^\si$ such that $(\be,\ol{\al_i})=0$.
It remains to prove that $s_i$ and $s_{\tau(i)}$ fix $\be$. 
From \eqref{Vsigma:Vtau:VwX} it follows that $\tau(\be)=\be$.
Hence $(\be,\ol{\al_i})=0$ implies $(\be,\al_i)=(\be,\al_{\tau(i)})=0$ so that both $s_i$ and $s_{\tau(i)}$ fix $\be$.
\end{proof}

Recall that for $i \in I^* \backslash \wt I$ we have not defined a simple restricted reflection $\ol{s_i}$.
Indeed, the following lemma shows that there are no involutions in $W$ that negate $\ol{\al_i}$.

\begin{lemma} \label{lem:X[i]infinite}
Suppose that $i \in I^*$ is such that $X[i]$ is of infinite type.
No involutive element of $W$ sends $\ol{\al_i}$ to $-\ol{\al_i}$.
\end{lemma}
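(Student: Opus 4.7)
The plan is to combine a non-positivity estimate for $(\ol{\al_i},\ol{\al_i})$ with a positive-definiteness property of the $(-1)$-eigenspace of any involution in $W$.

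First I would observe that $V^\si_{X[i]}=\Q\,\ol{\al_i}$. Indeed, applying (the argument of) Lemma \ref{lem:Vsigma:base} to the compatible decoration $(X,\tau|_{X[i]})$ of the principal submatrix $A_{X[i]}$ identifies $V^\si_{X[i]}$ with $\Sp_\Q\ol{\Pi_{X[i]}}$, and since the only $\tau$-orbit in $X[i]\setminus X$ is $\{i,\tau(i)\}$, this set reduces to $\{\ol{\al_i}\}$. As $X[i]$ is of infinite type by hypothesis, Lemma \ref{lem:restrictedsize} \ref{lem:restrictedsize:infinite} then yields
\[
(\ol{\al_i},\ol{\al_i})\le 0.
\]

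Now suppose, for contradiction, that $w\in W$ is an involution with $w(\ol{\al_i})=-\ol{\al_i}$. The untwisted case (i.e.\ $\tau=\id_I$) of Springer's result \cite[Prop.\ 3.3]{Sp85}, already invoked in Section~\ref{sec:cdec}, provides a subset $X'\subseteq I$ of finite type and an element $v\in W$ such that $w=v\,w_{X'}\,v^{-1}$. The main step is to show that $(\phantom{x},\phantom{x})$ restricts to a positive-definite form on $V^{-w}$. Since $v$ acts on $V$ as an isometry and $V^{-w}=v(V^{-w_{X'}})$, it suffices to check this for $V^{-w_{X'}}$. Because $w_{X'}\in W_{X'}$ fixes pointwise the orthogonal complement $\{\la\in V\,:\,(\la,\al_j)=0\text{ for all }j\in X'\}$ of $V_{X'}$ and acts as $-\oi_{X'}$ on $V_{X'}$ itself, one obtains $V^{-w_{X'}}=V_{X'}^{\oi_{X'}}\subseteq V_{X'}$; since $X'$ is of finite type (Section \ref{sec:subfinite}), the form is positive definite on $V_{X'}$ and hence on $V^{-w_{X'}}$.

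The two ingredients collide: the nonzero vector $\ol{\al_i}\in V^{-w}$ would have to satisfy $(\ol{\al_i},\ol{\al_i})>0$, contradicting the estimate in the first paragraph. The main subtlety I expect is confirming that the untwisted version of \cite[Prop.\ 3.3]{Sp85} does express every involution in a (possibly infinite) Coxeter system $W$ as a $W$-conjugate of the longest element of some finite parabolic subgroup; once this structural input is in place, the bilinear-form calculation is routine.
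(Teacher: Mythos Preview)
Your proof is correct and follows essentially the same approach as the paper: both use the non-positivity estimate from Lemma~\ref{lem:restrictedsize}~\ref{lem:restrictedsize:infinite}, invoke the Richardson/Springer conjugacy result to write $w=v\,w_Z\,v^{-1}$ with $Z$ of finite type, and derive a contradiction from positive definiteness on $V_Z$. The only minor difference is how you show the $(-1)$-eigenvector lands in $V_Z$: the paper argues combinatorially (for $i\notin Z$ the coefficient of $\al_i$ is preserved by $W_Z$, forcing it to vanish), whereas you argue geometrically via the orthogonal decomposition $V=V_{X'}\oplus V_{X'}^\perp$ and the fact that $w_{X'}$ fixes $V_{X'}^\perp$ pointwise; both are valid and standard.
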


\begin{proof}
Lemma \ref{lem:Vsigma:base} implies that $(V_{X[i]})^\si$ is 1-dimensional, so that $(\ol{\al_i},\ol{\al_i})\le 0$ by Lemma \ref{lem:restrictedsize} \ref{lem:restrictedsize:infinite}.
Now suppose there is an involution $w \in W$ such that $w(\ol{\al_i})=-\ol{\al_i}$.
By \cite[Thm.~A(a)]{Ri82}\footnote{This is a special case of the result \cite[Prop.\ 3.3]{Sp85} which we used in Section \ref{sec:cdec}.} there exists $v \in W$ and $Z \subseteq I$ of finite type such that $w = v \circ w_Z \circ v^{-1}$. 
Hence $v^{-1}(\ol{\al_i})$ is fixed by $-w_Z$.
Since $W_Z$ maps $Q^+ \backslash Q_Z$ to $Q^+$, it follows that $v^{-1}(\ol{\al_i}) \in V_Z$.
Since $Z$ is of finite type, the bilinear form $(\phantom{x},\phantom{x})$ is positive definite on $V_Z$, so that $(\ol{\al_i},\ol{\al_i}) = (v^{-1}(\ol{\al_i}) ,v^{-1}(\ol{\al_i}) )>0$, contradicting the first statement of the Lemma.
\end{proof}

Given $i \in I^*$, we now engineer the desired element $\wt s_i$.
There are two clear desirable properties: $\wt s_i$ should commute with $\si$ (so that it stabilizes $V^\si$) and should send $\ol{\al_i} \in V_{X[i]}$ to $-\ol{\al_i}$ (hence $\wt s_i$ will be involutive).
By Lemma \ref{lem:WextendedXfix}, it is natural to choose $\wt s_i \in W_{X[i]}$.
Since the longest element acts as multiplication by $-1$ on $V_{X[i]}$ up to a diagram automorphism, it is natural to set $\wt s_i = u \cdot w_{X[i]}$ for some $u \in W_{X[i]}$.
The two required properties are now seen to be equivalent to 
\eq{ \label{conditionsontildesi:2}
\al_i+\si(\al_i) \in (Q_{X[i]})^{u \circ \oi_{X[i]}},  \qq u \cdot w_{\oi_{X[i]}(X)} = w_X \cdot \tau(u).
}
If we choose $u \in W_X$ then $u \circ \oi_{X[i]}$ maps $\al_i+\si(\al_i) \in Q^+ \backslash Q_X$ to $Q^+ \backslash Q_X$, which supports the first condition in \eqref{conditionsontildesi:2}.
A natural choice for $u$ is now $u=w_X$, since in this case $\wt s_i$ has minimal length.
For this choice the last condition in \eqref{conditionsontildesi:2} amounts to $\oi_{X[i]}(w_X) = w_X$.
Assuming this and recalling Lemma \ref{wX:formula}, we see that the first condition in \eqref{conditionsontildesi:2} is satisfied.
Hence we are led to the following definition.

\begin{defn}[{See e.g.\ \cite[25.1]{Lu03} and \cite[Rmk.~8]{GI14}}]
We define $\wt W:=\langle \{ \wt s_i \}_{i \in \wt I} \rangle \le W$ where
\begin{flalign} 
\label{tildesi:def}
&& \wt s_i := w_X \cdot w_{X[i]} \in W_{X[i]}, \qq i \in \wt I. &&\hfill \defnend
\end{flalign} 
\end{defn}

Since the $\wt s_i$ are fixed by $\tau$ in $\Aut(A) \ltimes W$, we immediately obtain
\eq{ \label{tildeW:taufixed}
\wt W \le W^\tau.
}

\begin{exam} 
It is useful to spell out the generator $\wt s_i$ in certain typical configurations.
Let $i \in \wt I$.
\begin{enumerate} 
\item
If $i \in X^\perp$ then $\tau(i) \in X^\perp$ by Lemma \ref{lem:Xtau:basics} \ref{lem:Xtau:basics:connected}.
In this case $w_{X[i]} = w_X \cdot w_{\{ i,\tau(i)\}}$ so that $\wt s_i = w_{\{ i,\tau(i) \}}$.
There are now three possibilities: $\tau(i)=i$ (i.e. $i \in I_{\sf ns}$), so that $\{i,\tau(i)\}$ is of type ${\sf A}_1$ and $\wt s_i = s_i$; $a_{i \tau(i)}=0$ (i.e. $\tau(i) \ne i$ and $i \notin I_{\sf diff}$), so that $\{i,\tau(i)\}$ is of type ${\sf A}_1 \times {\sf A}_1$ and $\wt s_i = s_i s_{\tau(i)}$; $a_{i\tau(i)}=1$, so that $\{i,\tau(i)\}$ is of type ${\sf A}_2$ and $\wt s_i = s_is_{\tau(i)}s_i$.
\item
Suppose that $\tau(i)=i$ and that $X[i]$ is of rank 2.
Hence $X=\{j\}$ and $a_{ij}a_{ji} \in \{0,1,2,3\}$.
It follows that $\wt s_i = w_{\{ j\}} \cdot w_{\{i,j\}} \in \{ s_i, s_i s_j, s_i s_j s_i, s_i s_j s_i s_j s_i \}$, respectively.
Note that $\wt s_i$ is an involution except if $a_{ij}a_{ji}=1$. 
\hfill \examend
\end{enumerate}
\end{exam}

The following key result shows that, given the definition \eqref{tildesi:def}, the desirable properties of $\wt s_i$ are equivalent \emph{to each other} and to the condition that $(X,\tau)$ is a generalized Satake diagram.
It extends the result \cite[Thm.\ 2.1]{RV20} which itself combined and extended results in \cite{He84} and \cite{Lu03}.

\begin{prop} \label{prop:tildeW}
The following statements are equivalent:
\begin{enumerate}
\item \label{prop:tildesi:commutesigma}
$\wt W \le W^\si$; 
\item \label{prop:tildesi:involution}
$\wt s_i$ is an involution for all $i \in \wt I$;
\item \label{prop:tildesi:preserveX}
$\{ s_j \}_{j \in X}$ is stable under conjugation by elements of $\wt W$;
\item \label{prop:tildesi:permutePhiX+} 
$\Phi_X^+$ is $\wt W$-stable; 
\item \label{prop:tildesi:nobadnodes}
$(X,\tau)$ is a generalized Satake diagram;
\item \label{prop:tildesi:Vsigma}
$\wt W$ stabilizes $V^\si$;
\item \label{prop:tildesi:baralphai}
$\wt s_i(\ol{\al_i}) = -\ol{\al_i}$ for all $i \in \wt I$;
\item \label{prop:tildesi:normalizeX} 
$\wt W \le N_W(W_X)$.
\end{enumerate}
\end{prop}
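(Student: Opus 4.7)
The strategy is to build the equivalence around a core block (ii) $\Leftrightarrow$ (v) $\Leftrightarrow$ (vii), proved by a direct finite case analysis, and derive the remaining five statements from this via short arguments that exploit \eqref{tildeW:taufixed} ($\wt W \le W^\tau$), standard properties of longest elements in finite Coxeter groups, and the orthogonal decomposition $V = V^\si \oplus V^{-\si}$.

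First I would dispose of the trivial implications. Since $\wt W$ acts by isometries, (i) $\Leftrightarrow$ (vi): stability of $V^\si$ is equivalent to stability of $V^{-\si}$, which (as $\si$ acts by $\pm 1$ on these) is equivalent to commuting with $\si$. Next, (iii) $\Rightarrow$ (viii) is immediate from $W_X = \langle \{s_j\}_{j \in X}\rangle$; conversely, if $\wt w \in \wt W$ normalizes $W_X$, then $\wt w w_X \wt w^{-1} \in W_X$ has the same length in the Coxeter group $W_X$ as $w_X$ (since conjugation induces a Coxeter group automorphism of $W_X$ onto itself), forcing $\wt w w_X \wt w^{-1} = w_X$; combined with the fact that $\wt w \in W_{X[i]}$ is length-preserving, this forces conjugation by $\wt w$ to permute $\{s_j\}_{j \in X}$, giving (iii). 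For (iii) $\Leftrightarrow$ (iv): (iii) implies $\wt w(\al_j) \in \{\pm\al_k\}_{k \in X}$ for $j \in X$, and positivity of the image then follows from $\wt w w_X \wt w^{-1} = w_X$, since otherwise the sign would be detected in the product $w_X = \prod s_j$; conversely (iv) gives (iii) by $\wt w s_j \wt w^{-1} = s_{\wt w(\al_j)}$.

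The heart of the argument is (ii) $\Leftrightarrow$ (v) $\Leftrightarrow$ (vii). Since $X[i]$ is of finite type for $i \in \wt I$, the longest element satisfies $w_{X[i]}|_{V_{X[i]}} = -\oi_{X[i]}$, and \eqref{bar:wX} yields
\[
\wt s_i(\ol{\al_i}) = w_X w_{X[i]}(\ol{\al_i}) = -w_X(\oi_{X[i]}(\ol{\al_i})) = -\ol{\oi_{X[i]}(\al_i) + \si(\oi_{X[i]}(\al_i))}/ \ldots,
\]
which reduces (vii) to the diagrammatic condition $\oi_{X[i]}(\{i,\tau(i)\}) = \{i,\tau(i)\}$ with $\oi_{X[i]}|_X = \oi_X$. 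In parallel, $\wt s_i^2 = w_X w_{X[i]} w_X w_{X[i]} = 1$ iff $w_X$ and $w_{X[i]}$ commute, iff $\oi_{X[i]}(X) = X$ and $w_{\oi_{X[i]}(X)} = w_X$, which is the same condition. I would then run a finite enumeration over connected components $Z$ of $X[i]$ containing $i$ or $\tau(i)$ (using Lemma \ref{lem:Xtau:basics}\ref{lem:Xtau:basics:connected} to reduce to connected $Z$ or $Z = {\sf A}_1 \times {\sf A}_1$), checking this condition against the finite-type Dynkin list. The only configuration where it fails is precisely the one excluded in Definition \ref{def:GSat}\ref{def:GSat:vanilla}: $\tau(i)=i$ with the component of $X[i]$ containing $i$ of type ${\sf A}_2$ (one verifies directly that then $\wt s_i = s_j s_i s_j s_i$ has order 6 and $\wt s_i(\ol{\al_i}) \ne -\ol{\al_i}$). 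This closes (ii) $\Leftrightarrow$ (v) $\Leftrightarrow$ (vii).

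To close the remaining edges: (ii) $\Rightarrow$ (viii) follows because $w_X w_{X[i]} = w_{X[i]} w_X$ (extracted from the above analysis) gives $\wt s_i w_X \wt s_i^{-1} = w_X$ with $\wt s_i \in W_{X[i]}$, and then $\wt s_i$ normalizes $W_X$ (the parabolic subgroup of $W_{X[i]}$ centralized by $w_X$). Finally, (viii) implies (i): any $\wt w \in \wt W$ normalizes $W_X$, hence centralizes $w_X$ by the longest-element uniqueness argument above; combined with $\tau(\wt w) = \wt w$ from \eqref{tildeW:taufixed}, which gives $\tau\wt w = \wt w\tau$ in $\Aut(A) \ltimes W$, we obtain $\wt w \si = \wt w w_X \tau = w_X \wt w \tau = w_X \tau \wt w = \si \wt w$, so $\wt w \in W^\si$. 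The main obstacle is the enumeration in the core block; however, finite-type irreducible Dynkin diagrams form a short list, and only the ${\sf A}_2$-with-$\tau(i)=i$ configuration presents an obstruction, so the case analysis is tractable.
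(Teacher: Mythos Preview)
Your overall architecture is close to the paper's, but there are two genuine gaps.

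\textbf{The implication cycle is not closed.} You establish the blocks $\{\text{(ii)},\text{(v)},\text{(vii)}\}$, $\{\text{(iii)},\text{(iv)},\text{(viii)}\}$, $\{\text{(i)},\text{(vi)}\}$ and the arrows (ii) $\Rightarrow$ (viii) $\Rightarrow$ (i), but nothing returns from $\{\text{(i)},\text{(vi)}\}$ to the core. The missing link is easy once noticed: by \eqref{tildeW:taufixed} every $\wt s_i$ commutes with $\tau$, so (i) (commutation with $\si = w_X\tau$) forces $[\wt s_i,w_X]=1$, i.e.\ $w_Xw_{X[i]}=w_{X[i]}w_X$, which is your (ii). The paper instead closes the loop by proving the contrapositives $\neg$(v) $\Rightarrow$ $\neg$(vi) and $\neg$(v) $\Rightarrow$ $\neg$(viii) via the explicit ${\sf A}_2$ computation.

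\textbf{Your argument for (viii) $\Rightarrow$ (iii) is circular.} You assert that conjugation by $\wt w \in N_W(W_X)$ ``induces a Coxeter group automorphism of $W_X$'' preserving length, hence fixing $w_X$. But an arbitrary group automorphism of $W_X$ need not preserve Coxeter length; that requires precisely that the simple reflections $\{s_j\}_{j\in X}$ be permuted, which is (iii). (Concretely, for $X$ of type ${\sf A}_2$ inside ${\sf A}_3$, conjugation by $s_2\in W_X$ sends $w_X=s_1s_2s_1$ to $s_1$.) Your (viii) $\Rightarrow$ (i) step then inherits the same gap, since it invokes ``the longest-element uniqueness argument above''. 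A correct route: from (viii) one has $\wt s_i(\Phi_X)=\Phi_X$; but $\wt s_i(\al_j)=-w_X(\al_{\oi_{X[i]}(j)})$, which lies in $\Phi_X$ only if $\oi_{X[i]}(j)\in X$. Thus (viii) forces $\oi_{X[i]}(X)=X$, which is (v). This is essentially the paper's contrapositive, done forwards.

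Two minor corrections: in the ${\sf A}_2$ obstruction $\wt s_i = s_j s_i s_j s_i = s_i s_j$ has order $3$, not $6$ (still non-involutive, so your conclusion survives); and your (iii) $\Rightarrow$ (iv) positivity step (``the sign would be detected in the product $w_X=\prod s_j$'') does not work as stated --- use instead that $\wt s_i^{-1}(\al_j)=\al_{\oi_{X[i]}\oi_X(j)}>0$ always, which rules out $\wt s_i(\al_j)<0$.
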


\begin{proof} 
Note that each statement except \ref{prop:tildesi:nobadnodes} is a statement about the finite group $W_{X[i]}$; for instance, \ref{prop:tildesi:Vsigma} is equivalent to the statement that for all $i \in \wt I$, $\wt s_i$ stabilizes $V^\si$.
Furthermore, recall Definition \ref{def:GSat} \ref{def:GSat:vanilla}.
Since the excluded connected component 
\tp[baseline=-3pt,line width=.7pt,scale=75/100]{
\draw (.1,0) --  (.5,0);
\filldraw[fill=white] (0,0) circle (.1);
\filldraw[fill=black] (.5,0) circle (.1);
} 
is of finite type, it follows that $(X,\tau)$ is a generalized Satake diagram if and only if for all $i \in \wt I$ such that $\tau(i)=i$ the connected component of the subdiagram $X[i]$ containing $i$ is not of type ${\sf A}_2$.
Hence all statements in Proposition \ref{prop:tildeW}, including \ref{prop:tildesi:nobadnodes}, can be written in the form ``For all $i \in \wt I$, ...''.
It therefore suffices to prove the equivalence of the statements for a given fixed $i \in \wt I$.
We can refer to the proof of the finite-type result \cite[Thm.\ 2.1]{RV20} for the equivalence of statements \ref{prop:tildesi:commutesigma}, \ref{prop:tildesi:nobadnodes} and \ref{prop:tildesi:baralphai}, as well as the statement
\eq{ \label{prop:tildesi:eq1}
w_X \cdot w_{X[i]} = w_{X[i]} \cdot w_X,
}
which is easily seen to be equivalent to \ref{prop:tildesi:involution}.
By \eqref{wX:conjugation} (with $X$ replaced by $X[i]$), \eqref{prop:tildesi:eq1} can be rephrased as the condition
\eq{ \label{prop:tildesi:eq2}
\oi_{X[i]}(w_X) = w_X
}
which we encountered above, and which in turn is equivalent to \ref{prop:tildesi:preserveX}.
By \eqref{wX:diagaut}, \eqref{prop:tildesi:eq2} is equivalent to 
\eq{ \label{prop:tildesi:eq3}
\oi_{X[i]}(X) = X.
}
In turn, this is equivalent to  
\eq{ \label{prop:tildesi:eq4}
w_{X[i]}(\Phi^+_X) = -\Phi^+_X.
}
Since also $w_X(\Phi^+_X)=-(\Phi^+_X)$, \eqref{prop:tildesi:eq4} is equivalent to \ref{prop:tildesi:permutePhiX+}.

In order prove that the remaining statements \ref{prop:tildesi:Vsigma} and \ref{prop:tildesi:normalizeX} are equivalent to the others, it remains to establish some implications.
\begin{description}
\item[{\ref{prop:tildesi:commutesigma} $\Rightarrow$ \ref{prop:tildesi:Vsigma}}]
This is clear.
\item[{\ref{prop:tildesi:preserveX} $\Rightarrow$ \ref{prop:tildesi:normalizeX}}]
This is clear.
\item[{\ref{prop:tildesi:Vsigma} $\Rightarrow$ \ref{prop:tildesi:nobadnodes} 
and \ref{prop:tildesi:normalizeX} $\Rightarrow$ \ref{prop:tildesi:nobadnodes}}]
We prove the contrapositives.
Suppose 
\tp[baseline=-3pt,line width=.7pt,scale=75/100]{
\draw (.1,0) --  (.5,0);
\filldraw[fill=white] (0,0) circle (.1);
\filldraw[fill=black] (.5,0) circle (.1);
} 
is a connected component of $X[i]$.
Then $\tau(i)=i$ and there exists $j \in X$ such that $w_{X[i]}|_{\Sp_\Q\{\al_i,\al_j\}}$ equals $s_i s_j s_i = s_j s_i s_j$.
Hence $\wt s_i = w_{\{i,j\}}w_{\{i\}} = s_is_j$.
Note that $\si|_{\Sp_\Q\{\al_i,\al_j\}} = s_j$.
We obtain $\ol{\al_i} = \al_i + \frac{1}{2}\al_j \in V^\si$.
It follows that $\wt s_i(\ol{\al_i}) = s_i(\al_i + \tfrac{1}{2}\al_j) = \tfrac{1}{2}(\al_j-\al_i)$ which is not fixed by $\si$, as required.
Furthermore, conjugation by $w_{X[i]}$ maps $s_j$ to $s_i$, so that $w_{X[i]}$ does not normalize $W_X$. 
\hfill \qedhere
\end{description}
\end{proof}

\begin{rmk}
Suppose $(X,\tau)$ is \emph{not} a generalized Satake diagram.
Let $i \in I^*$ be such that $X[i]$ has
\tp[baseline=-3pt,line width=.7pt,scale=75/100]{
\draw (.1,0) --  (.5,0);
\filldraw[fill=white] (0,0) circle (.1);
\filldraw[fill=black] (.5,0) circle (.1);
} 
as a connected component.
\begin{enumerate}
\item
Note that $\wt s_i$ does not stabilize $V^\si$, but it does stabilize the subspace $\{ \be \in V^\si \, : \, (\be,\ol{\al_i}) = 0 \}$, so there is no contradiction between Lemma \ref{lem:WextendedXfix} and Proposition \ref{prop:tildeW} \ref{prop:tildesi:Vsigma}. 
\item
The generator $\wt s_i$ is of order 3.
Hence in this case $\wt W$ is a (possibly infinite) complex reflection group.
We will not pursue this further here.
\hfill \rmkend
\end{enumerate}
\end{rmk}


\subsection{The group $W(\ol \Phi)$ revisited} \label{sec:W:barPhi}

Note that Lemma \ref{lem:WextendedXfix} and Proposition \ref{prop:tildeW} \ref{prop:tildesi:Vsigma}-\ref{prop:tildesi:baralphai} combine to imply that, for all $i \in \wt I$, 
\eq{ \label{tildesi:barsi}
\wt s_i|_{V_\si} = \ol{s_i}.
}
As a consequence we obtain a group map $\phi: \wt W \to W(\ol{\Phi})$.
We now proceed as in \cite[Sec.\ 2.3]{DK19} to show that $\phi$ is an isomorphism.
Denote by $W'(\ol\Phi)$ the subgroup of $W(\ol{\Phi})$ generated by $\{ \ol{s_i} \}_{i \in \wt I}$.
The following lemma is similar to a key technical step in the standard proof that Weyl groups of root systems are generated by the simple reflections, see e.g.~\cite[Thm.\ 1.5, Step (1) of proof]{Hu90}, but we feel it is beneficial to go through it carefully in the restricted Kac-Moody setting.

\begin{lemma} \label{lem:WbarPhi:simpleroot}
Let $\be \in \ol{\Phi^+}$ be and assume that $(\be,\be)>0$.
Then the $W'(\ol\Phi)$-orbit through $\be$ intersects $\Z_{\ge 0}\ol{\al_i}$ for some $i \in \wt I$.
\end{lemma}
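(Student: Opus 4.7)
The plan is to proceed by induction on the restricted height $\sum_{i \in I^*} m_i$ of $\be$, where $\be = \sum_{i \in I^*} m_i \ol{\al_i}$ with $m_i \in \Z_{\ge 0}$ is the (unique) expansion guaranteed by the second statement of Lemma \ref{lem:Vsigma:base}. For the base case, restricted height $1$ means $\be = m_i \ol{\al_i}$ for a single $i \in I^*$ with $m_i \in \Z_{>0}$; the hypothesis $(\be,\be) > 0$ then forces $(\ol{\al_i},\ol{\al_i}) > 0$, and Lemma \ref{lem:innerproducts} yields $i \in \wt I$, so $\be \in \Z_{>0}\ol{\al_i}$ already lies in the required orbit.

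For the inductive step, expand $(\be,\be) = \sum_{i \in I^*} m_i (\be,\ol{\al_i})$. Since this is strictly positive, there must exist $i \in I^*$ with $m_i > 0$ and $(\be,\ol{\al_i}) > 0$. The key point, which is where Lemma \ref{lem:innerproducts} really does the work, is that such an $i$ is automatically in $\wt I$: indeed, for any $i \in I^* \setminus \wt I$ one has $(\ol{\al_j},\ol{\al_i}) \le 0$ for every $j \in I^*$ (including $j = i$, by Lemma \ref{lem:restrictedsize}\,\ref{lem:restrictedsize:infinite} combined with Lemma \ref{lem:innerproducts}), so $(\be,\ol{\al_i}) \le 0$ whenever $\be \in \Sp_{\Z_{\ge 0}}\ol\Pi$.

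Having located $i \in \wt I$ with $m_i > 0$ and $(\be,\ol{\al_i}) > 0$, I would consider $\ol{s_i}(\be)$. If $\be \in \Q \ol{\al_i}$, then $\be = m_i \ol{\al_i} \in \Z_{\ge 0}\ol{\al_i}$ and we are finished without needing the induction hypothesis. Otherwise Lemma \ref{lem:simpleresrefl:stable} guarantees $\ol{s_i}(\be) \in \ol{\Phi^+}$, and since $\ol{s_i}$ is an isometry of $V^\si$, $(\ol{s_i}(\be),\ol{s_i}(\be)) = (\be,\be) > 0$. The expansion of $\ol{s_i}(\be)$ has the same coefficients $m_j$ for $j \ne i$ and new coefficient $m_i - 2(\be,\ol{\al_i})/(\ol{\al_i},\ol{\al_i}) < m_i$ at $\ol{\al_i}$; since these new coefficients are again nonnegative integers (because $\ol{s_i}(\be) \in \ol{\Phi^+}$), the restricted height strictly decreases. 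Invoking the induction hypothesis on $\ol{s_i}(\be)$ completes the argument.

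The main obstacle, as already flagged, is ensuring that the index $i$ supplied by the inner-product positivity in fact belongs to $\wt I$; without this, one cannot invoke the simple restricted reflection $\ol{s_i}$ at all. This is where the restricted setting genuinely departs from the classical finite-root-system proof, and is the reason the statement is restricted to $\be$ with $(\be,\be) > 0$: roots orthogonal or isotropic with respect to the restricted form can fail to lie in a $W'(\ol\Phi)$-orbit of a simple restricted root.
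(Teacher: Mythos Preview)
Your proof is correct and follows the same overall strategy as the paper: descent on restricted height by applying $\ol{s_i}$ for an index $i$ with $(\be,\ol{\al_i})>0$, using Lemma~\ref{lem:simpleresrefl:stable} to stay in $\ol{\Phi^+}$. The main difference is in how integrality of the height decrement is established. You deduce it directly from Lemma~\ref{lem:Vsigma:base}: since $\ol{s_i}(\be)\in\ol{\Phi^+}$, its coefficients in $\ol\Pi$ are nonnegative integers, forcing $2(\be,\ol{\al_i})/(\ol{\al_i},\ol{\al_i})\in\Z_{>0}$. The paper instead lifts to $W$ via \eqref{tildesi:barsi}, writing $\ol{s_i}(\ol\mu)-\ol\mu=\ol{\wt s_i(\mu)-\mu}$ for a preimage $\mu\in\Phi^+$ and using $\wt s_i(\mu)-\mu\in\Sp_\Z\{\al_j\}_{j\in X[i]}$ to land in $\Z\ol{\al_i}$. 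Your route is more self-contained in $V^\si$ and sidesteps the appeal to Proposition~\ref{prop:tildeW}; the paper's route makes the link to $\wt W$ explicit, which is the aim of the surrounding subsection. You also spell out, via Lemma~\ref{lem:innerproducts}, why any index $i$ with $(\be,\ol{\al_i})>0$ must lie in $\wt I$; the paper leaves this step implicit when it invokes $\ol{s_i}$ and Lemma~\ref{lem:simpleresrefl:stable}.
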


\begin{proof}
Denote the orbit under consideration by $[\be]$.
By Lemma \ref{lem:innerproducts}, if $\ol{\al_i} \in [\be]$ then $i \in \wt I$.
Hence it is sufficient to prove that $[\be]$ intersects $\Z_{\ge 0}\ol{\al_i}$ for some $i \in I^*$.

Since $[\be] \cap \ol{\Phi^+}$ contains $\be$, it is nonempty.
Now pick any $\mu \in \Phi^+$ such that $\ol\mu$ is an element of $[\be] \cap \ol{\Phi^+}$.
By Lemma \ref{lem:Vsigma:base}, $\ol\mu = \sum_{j \in I^*} m_j \ol{\al_j}$ with $m_j \in \Z_{\ge 0}$. 
Since $(\ol\mu,\ol\mu) = (\be,\be) > 0$, there exists $i \in I^*$ such that $(\ol\mu,\ol{\al_i})>0$.
If $\ol\mu \in \Z_{\ge 0} \ol{\al_i}$, we are done.
Otherwise consider $\ol{s_i}(\ol \mu)$ which lies in $\ol{\Phi^+} \backslash \Q\ol{\al_i}$ by Lemma \ref{lem:simpleresrefl:stable}.
We obtain
\eq{ \label{lem:WbarPhi:simpleroot:1}
\ol{s_i}(\ol \mu) - \ol \mu = - 2 \frac{(\ol{\al_i},\ol{\mu})}{(\ol{\al_i},\ol{\al_i})} \ol{\al_i} \in \Q_{< 0} \ol{\al_i}.
}
On the other hand,
\eq{
\ol{s_i}(\ol \mu) - \ol \mu  = \wt s_i(\ol \mu) - \ol \mu = \ol{\wt s_i(\mu)-\mu}
}
as a consequence of \eqref{tildesi:barsi} and Proposition \ref{prop:tildeW} \ref{prop:tildesi:commutesigma}.
Since $\wt s_i(\mu)-\mu \in \Sp_\Z\{ \al_j \}_{j \in X[i]}$ it follows that $\ol{s_i}(\ol \mu) - \ol \mu \in \Sp_\Z\{ \ol{\al_j} \}_{j \in X[i]} = \Z \ol{\al_i}$.
Now \eqref{lem:WbarPhi:simpleroot:1} implies that $\ol{s_i}(\ol \mu) - \ol \mu$ lies in $\Z_{< 0} \ol{\al_i}$, so that $\ol{s_i}(\ol \mu) \in \ol{\Phi^+}$ has smaller restricted height than $\ol \mu$.

We repeat the above steps with $\ol\mu$ replaced by $\ol{s_i}(\ol\mu)$.
This process terminates, since the restricted simple roots are precisely the elements of $\ol{\Phi^+}$ with restricted height 1 so that at some point an element of $\Z_{> 0} \ol{\al_i}$ is obtained for some $i \in I^*$.
\end{proof}

\begin{prop} \label{prop:WbarPhi:generators}
The group $W(\ol{\Phi})$ is generated by $\ol{s_i}$ for $i \in \wt I$.
\end{prop}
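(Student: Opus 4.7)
The plan is to show that every generator $s_\be|_{V^\si}$ of $W(\ol\Phi)$ (with $\be \in \ol\Phi$ and $(\be,\be) > 0$) lies in the subgroup $W'(\ol\Phi) := \langle \{ \ol{s_i} \}_{i \in \wt I} \rangle$, by conjugating such a generator to one of the $\ol{s_i}$ via Lemma \ref{lem:WbarPhi:simpleroot} and the conjugation formula \eqref{restrictedreflection:isometry}.

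First, since $\ol\Phi = \ol{\Phi^+} \cup (-\ol{\Phi^+})$ by Lemma \ref{lem:Vsigma:base}, and since $s_\be = s_{-\be}$ (the reflection depends only on the line $\Q \be$), we may assume without loss of generality that $\be \in \ol{\Phi^+}$. By Lemma \ref{lem:WbarPhi:simpleroot}, there exist $w \in W'(\ol\Phi)$ and $i \in \wt I$ such that $w(\be) \in \Z_{\ge 0} \ol{\al_i}$; since $w$ is invertible and $\be \ne 0$, in fact $w(\be) = c\, \ol{\al_i}$ for some $c \in \Q_{>0}$.

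Next, every element of $W'(\ol\Phi)$ is a product of the orthogonal reflections $\ol{s_i} = s_{\ol{\al_i}}|_{V^\si}$, each of which is an isometry of $V^\si$; hence $w$ is itself an isometry of $V^\si$. Therefore \eqref{restrictedreflection:isometry} applies and gives
\eq{
w \circ (s_\be|_{V^\si}) \circ w^{-1} = s_{w(\be)}|_{V^\si} = s_{c\, \ol{\al_i}}|_{V^\si} = s_{\ol{\al_i}}|_{V^\si} = \ol{s_i},
}
using once more that $s_\ga$ depends only on $\Q \ga$. Consequently $s_\be|_{V^\si} = w^{-1} \circ \ol{s_i} \circ w \in W'(\ol\Phi)$.

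Since this argument applies to an arbitrary generator of $W(\ol\Phi)$, we conclude $W(\ol\Phi) = W'(\ol\Phi)$, as desired. I do not anticipate a significant obstacle: the key content has already been deposited in Lemma \ref{lem:WbarPhi:simpleroot} (existence of a path of simple restricted reflections from any positive restricted root with positive norm to a nonnegative multiple of some $\ol{\al_i}$), and the remainder is the standard conjugation trick together with the bookkeeping that $\ol{s_i}$ is indeed $s_{\ol{\al_i}}|_{V^\si}$, which is built into its definition.
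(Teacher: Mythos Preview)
Your proof is correct and follows essentially the same approach as the paper: reduce to $\be \in \ol{\Phi^+}$ via $s_\be = s_{-\be}$, invoke Lemma \ref{lem:WbarPhi:simpleroot} to bring $\be$ into $\Z_{>0}\ol{\al_i}$ by an element of $W'(\ol\Phi)$, and then use the conjugation identity \eqref{restrictedreflection:isometry} together with $s_{c\,\ol{\al_i}} = \ol{s_i}$ to conclude. The only cosmetic difference is the order in which the reductions are presented.
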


\begin{proof}
We need to prove that $W'(\ol\Phi)$ equals $W(\ol\Phi)$.
From Lemma \ref{lem:WbarPhi:simpleroot} it follows that
\eq{
\{ \be \in \ol{\Phi^+} \, : \, (\be,\be)>0 \} 
}
lies in the $W'(\ol\Phi)$-image of $\cup_{i \in \wt I} \Z_{>0} \ol{\al_i}$.
Since elements of $W(\ol\Phi)$ are isometries, by \eqref{restrictedreflection:isometry} we obtain that for all $\be \in \ol{\Phi^+}$ such that $(\be,\be)>0$ there exist $w \in W'(\ol \Phi)$, $i \in \wt I$ and $m \in \Z_{>0}$ such that
\eq{
s_\be = w s_{m \ol{\al_i}} w^{-1}.
}
We complete the proof by observing that $s_{-\be} = s_\be$ and $s_{m \ol{\al_i}} = \ol{s_i}$.
\end{proof}

Proposition \ref{prop:WbarPhi:generators} shows that $W(\ol\Phi)$ is generated by $\{ \wt s_i \}_{i \in \wt I}$ so that the map $\phi$ is surjective.
The following result proves that the map $\phi$ is injective.

\begin{lemma} \label{lem:tildeW:faithful}
If $(X,\tau)$ is a generalized Satake diagram, the action of $\wt W$ on $\ol{\Phi}$ is faithful.
\end{lemma}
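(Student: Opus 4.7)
The plan is to reduce the faithfulness statement to the group-theoretic claim $\wt W \cap W_X = \{1\}$ inside $W$, and then deduce this via Lusztig's structural result on $\mc W$. First, since $V^\si = \Sp_\Q \ol \Phi$ by Lemma \ref{lem:Vsigma:base}, the $\wt W$-action on $\ol\Phi$ is faithful precisely when the restriction map $\cdot|_{V^\si}: \wt W \to \GL(V^\si)$ is injective. The hypothesis that $(X,\tau)$ is a generalized Satake diagram gives $\wt W \le W^\si$ by Proposition \ref{prop:tildeW} \ref{prop:tildesi:commutesigma}, and Lemma \ref{lem:wX:kernel} identifies the kernel of $\cdot|_{V^\si}: W^\si \to \ol W$ as $W_X$. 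Injectivity of the restriction therefore reduces to the claim $\wt W \cap W_X = \{1\}$.

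Next I would place each generator $\wt s_i$ in $\mc W = N_{W^X}(W_X)$. Proposition \ref{prop:tildeW} \ref{prop:tildesi:normalizeX} already gives $\wt s_i \in N_W(W_X)$, so it remains to verify $\wt s_i \in W^X$. For this, I rely on the standard fact that the longest element $w_{X[i]}$ of the finite Coxeter group $W_{X[i]}$ admits the reduced factorization $w_{X[i]} = w_X \cdot u$, where $u$ is the unique longest element of the finite set $W^X \cap W_{X[i]}$ of minimal left coset representatives of $W_X$ in $W_{X[i]}$. Using $w_X^2 = 1$, one then reads off $\wt s_i = w_X \cdot w_{X[i]} = u \in W^X$, so indeed $\wt s_i \in \mc W$.

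Finally, the result cited just before Proposition \ref{prop:barW} (from \cite[25.1]{Lu03}) asserts that $\mc W$ is a subgroup of $W$. Combined with the previous step this yields $\wt W \le \mc W \subseteq W^X$, and \eqref{W:cosetX:factorization} gives $W^X \cap W_X = \{1\}$. Hence $\wt W \cap W_X = \{1\}$, which by the reduction in the first paragraph completes the proof. No serious obstacle is expected; the only non-routine ingredient is the elementary observation $\wt s_i = w_X \cdot w_{X[i]} \in W^X$, which is what makes the ambient parabolic-coset and normalizer machinery applicable.
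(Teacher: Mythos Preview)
Your proof is correct but follows a genuinely different route from the paper's. The paper argues directly: given $w \in \wt W$ fixing every $\ol{\al_i}$, it exploits that each generator $\wt s_i$ commutes with $\tau$ and permutes $\Pi_X$ (consequences of Proposition \ref{prop:tildeW}) to rewrite the fixed-point condition and conclude that $w$ stabilizes the whole of $\Pi$; since the only element of $W$ fixing $\Pi$ is the identity, $w=1$.

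Your argument instead passes through the parabolic-coset machinery: you reduce faithfulness to $\wt W \cap W_X = \{1\}$ via Lemma \ref{lem:wX:kernel}, then place $\wt W$ inside $\mc W \subseteq W^X$ using the elementary observation $\wt s_i = w_X w_{X[i]} \in W^X$, Proposition \ref{prop:tildeW} \ref{prop:tildesi:normalizeX}, and Lusztig's result that $\mc W$ is a group. This is more structural and in effect anticipates the containment $\wt W \le \mc W$ that the paper only establishes later in Lemma \ref{lem:tildeW:calWtau:1}. The paper's approach has the advantage of being self-contained (no appeal to \cite{Lu03}); yours is cleaner and makes transparent that faithfulness is a formal consequence of $\wt W$ sitting inside the minimal coset representatives. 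One terminological quibble: the paper's $W^X$ consists of minimal-length representatives for the \emph{right} cosets $W_X \backslash W$ (the defining condition is $\ell(s_j w) > \ell(w)$), not left cosets as you wrote; your factorization $w_{X[i]} = w_X \cdot u$ with $u \in W^X$ is nonetheless correct.
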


\begin{proof}
We give a simplified version of the proof of \cite[Lem.\ 2.8]{DK19}.
Suppose that $w \in \wt W$ fixes $\ol\Phi$ pointwise.
We need to prove that $w = 1$.
In particular, $w(\ol{\al_i})=\ol{\al_i}$ for all $i \in I^*$, i.e.
\eq{ \label{lem:tildeW:faithful:1}
w(\al_i + \si(\al_i)) = \al_i + \si(\al_i) \qq \text{for all } i \in I^*.
}
Recalling that $\si= w_X \circ \tau$ and using Lemma \ref{lem:Xtau:basics} \ref{lem:Xtau:basics:wX:formula} we obtain
\eq{
\si(\al_i) = \al_{\tau(i)} + \sum_{j \in X} v_{ij} \al_j
}
with $v_{ij} \in \Z_{\ge 0}$.
Since the generators of $\wt W$ commute with $\tau$ and act on $\Pi_X:=\{ \al_j \}_{j \in X}$ as a diagram automorphism, 
we deduce from \eqref{lem:tildeW:faithful:1} the relation
\eq{ \label{lem:tildeW:faithful:2}
w(\al_i)  + \tau(w(\al_i)) + \sum_{j \in X} v_{ij} w(\al_j) =  \al_i + \al_{\tau(i)} + \sum_{j \in X} v_{ij} \al_j.
}
Now applying $\ol{\phantom{x}}$ and using \eqrefs{bar:wX}{bar:tau}, we obtain $w(\al_i) = \al_i$ for all $i \in I^*$.
Using once again that $w$ commutes with $\tau$ and acts on $\Pi_X$ as a diagram automorphism, it follows that $w$ stabilizes $\Pi$.
From $\Aut(\Phi) = \Out(A) \ltimes W$ we deduce $w = 1$.
\end{proof}

We can now identify the two groups $\wt W$ and $W(\ol\Phi)$.

\begin{thrm} \label{thm:weylgroups}
$\wt W \cong W(\ol\Phi)$ if and only if $(X,\tau)$ is a generalized Satake diagram.
\end{thrm}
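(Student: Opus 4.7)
The plan is to construct and analyze the natural restriction map
\[
\phi: \wt W \to W(\ol\Phi), \qq \wt s_i \mapsto \ol{s_i} \; (i \in \wt I),
\]
showing that it is well-defined, surjective and injective precisely when $(X,\tau)$ is a generalized Satake diagram; all three properties (and hence the isomorphism) will turn out to be equivalent to this combinatorial condition.

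For the forward direction, assume $(X,\tau)$ is a generalized Satake diagram. Then Proposition \ref{prop:tildeW} \ref{prop:tildesi:Vsigma} gives $\wt W \le \GL(V^\si)$ after restriction, and combining Lemma \ref{lem:WextendedXfix} with Proposition \ref{prop:tildeW} \ref{prop:tildesi:baralphai} yields the key identity \eqref{tildesi:barsi}, namely $\wt s_i|_{V^\si} = \ol{s_i}$. Hence restriction defines the group homomorphism $\phi$ above (note that the image lies in $W(\ol\Phi)$ because $\wt s_i$ is an isometry of $V^\si$ and restricts to the reflection $\ol{s_i}$). Surjectivity is then immediate from Proposition \ref{prop:WbarPhi:generators}, which asserts that $W(\ol\Phi)$ is generated by the simple restricted reflections $\ol{s_i}$ with $i \in \wt I$. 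Injectivity follows from Lemma \ref{lem:tildeW:faithful}: any $w \in \ker\phi$ acts trivially on $V^\si$ and in particular fixes $\ol \Phi$ pointwise, forcing $w=1$ by faithfulness.

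For the converse direction, suppose $(X,\tau)$ is \emph{not} a generalized Satake diagram. By Proposition \ref{prop:tildeW} \ref{prop:tildesi:nobadnodes}, $\wt W$ fails to stabilize $V^\si$, so the assignment $\wt s_i \mapsto \ol{s_i}$ cannot arise from restriction to $V^\si$. More concretely, there exists $i \in \wt I$ with $\tau(i)=i$ such that the connected component of $X[i]$ containing $i$ is of type ${\sf A}_2$; writing $X[i]=\{i,j\}$ with $j\in X$, one has $\wt s_i = s_i s_j$, an element of order $3$, whereas the intended image $\ol{s_i} \in W(\ol\Phi)$ is an involution. Thus no map $\wt W \to W(\ol\Phi)$ sending $\wt s_i$ to $\ol{s_i}$ can exist, establishing the contrapositive and completing the proof. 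The main subtlety in this whole argument is not a computational one but rather ensuring that the identity $\wt s_i|_{V^\si}=\ol{s_i}$ really does promote to a homomorphism on all of $\wt W$, which is ultimately guaranteed by the fact that restriction to a stable subspace is automatically a group homomorphism once well-definedness on generators is verified.
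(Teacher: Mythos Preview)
Your proof is correct and follows essentially the same approach as the paper: the forward direction is identical (surjectivity via Proposition \ref{prop:WbarPhi:generators}, injectivity via Lemma \ref{lem:tildeW:faithful}), and the converse is the same order-comparison argument, with the paper phrasing it as $\langle \wt s_i \rangle \cong \Z/3\Z$ versus $W(\ol{\Phi_{\{i,j\}}}) \cong \Z/2\Z$ rather than comparing the orders of $\wt s_i$ and $\ol{s_i}$ directly. One small imprecision: you write ``$X[i]=\{i,j\}$'', but $X[i]=X\cup\{i\}$ may have further components disconnected from $i$; however these contribute $w_{X'}^2=1$ to $\wt s_i$, so your conclusion $\wt s_i=s_is_j$ of order $3$ still holds.
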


\begin{proof}
If $(X,\tau)$ is a generalized Satake diagram, by Proposition \ref{prop:WbarPhi:generators} and Lemma \ref{lem:tildeW:faithful}, the map $\phi$ is a group isomorphism.
It only remains to prove the ``only if'' part of the first statement, which can be done as before via the contrapositive.
Suppose $(X,\tau)$ is not a generalized Satake diagram.
By Proposition \ref{prop:tildeW}, there exists $i \in \wt I$ and $j \in X$ such that $\wt s_i = s_i s_j$ is of order 3.
It suffices to prove that $\wt W \cap W_{X[i]} = \langle \wt s_i \rangle$ is not isomorphic to $W(\ol{\Phi_{\{i,j\}}})$.
This is clear since by Lemma \ref{lem:Vsigma:base} $W(\ol{\Phi_{\{i,j\}}}) = W(\{ \pm \ol{\al_i}\})$ is a group of two elements.
\end{proof}

As a consequence, we obtain that $\ol \Phi$ is a root system in the following sense:

\begin{crl} \label{WbarPhi:stable}
If $(X,\tau)$ is a generalized Satake diagram, then $\ol \Phi$ is $W(\ol\Phi)$-stable.
\end{crl}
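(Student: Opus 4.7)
The plan is to exploit the isomorphism $\wt W \cong W(\ol\Phi)$ furnished by Theorem \ref{thm:weylgroups}, so that stability of $\ol\Phi$ can be checked by lifting the restricted reflections to honest elements of $W$ that stabilize $\Phi$. Since any $\wt s_i$ with $i \in \wt I$ lies in $W$, its action automatically preserves $\Phi$; the task is then to transport this statement through the projection $\ol{\phantom{x}} : V \to V^\si$.

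More concretely, by Proposition \ref{prop:WbarPhi:generators} it suffices to show that each simple restricted reflection $\ol{s_i}$ with $i \in \wt I$ stabilizes $\ol\Phi$. Fix such $i$ and let $\be \in \ol\Phi$, so $\be = \ol\al$ for some $\al \in \Phi \setminus \Phi_X$ (using \eqref{barSigma:X}). By Proposition \ref{prop:tildeW} \ref{prop:tildesi:commutesigma} (applicable precisely because $(X,\tau)$ is a generalized Satake diagram), $\wt s_i$ commutes with $\si$, hence also with the projection $\ol{\phantom{x}} = \tfrac{1}{2}(\id+\si)$. Combined with \eqref{tildesi:barsi}, this gives
\eq{
\ol{s_i}(\be) = \wt s_i(\ol\al) = \ol{\wt s_i(\al)}.
}
Since $\wt s_i \in W$ stabilizes $\Phi$, the element $\wt s_i(\al)$ lies in $\Phi$; it remains only to verify that it does not lie in $\Phi_X$, as that would make $\ol{s_i}(\be) = 0$ and invalidate the conclusion.

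To rule this out, suppose for contradiction that $\wt s_i(\al) \in \Phi_X$. Then $\ol{\wt s_i(\al)} = 0$, so the displayed equality above would give $\wt s_i(\ol\al) = 0$. But $\wt s_i \in W \le \GL(V)$ is invertible and $\ol\al = \be \ne 0$, a contradiction. Hence $\wt s_i(\al) \in \Phi \setminus \Phi_X$ and $\ol{s_i}(\be) = \ol{\wt s_i(\al)} \in \ol\Phi$, as required. The main conceptual step is the commutation of $\wt s_i$ with $\ol{\phantom{x}}$, which is exactly where the generalized Satake condition enters via Proposition \ref{prop:tildeW}; the remainder is a short invertibility argument and the cosmetic observation that $W(\ol\Phi)$ is generated by the simple restricted reflections.
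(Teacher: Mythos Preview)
Your proof is correct and follows essentially the same approach as the paper: reduce to the simple generators, use Proposition \ref{prop:tildeW} \ref{prop:tildesi:commutesigma} to commute $\wt s_i$ past the projection $\ol{\phantom{x}}$, and then check that $\wt s_i(\al)$ stays outside $\Phi_X$. The only difference is in this last step: the paper invokes Proposition \ref{prop:tildeW} \ref{prop:tildesi:permutePhiX+} (stability of $\Phi_X^+$ under $\wt W$) to conclude $\wt s_i(\al) \in \Phi \setminus \Phi_X$, whereas you argue by contradiction using the invertibility of $\wt s_i$ on $V$. Your variant is a pleasant shortcut that avoids appealing to an additional item of Proposition \ref{prop:tildeW}, though both are equally valid.
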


\begin{proof}
Let $i \in \wt I$ be arbitrary.
In view of Theorem \ref{thm:weylgroups} it suffices to prove that $\ol\Phi$ is stable under the action of $\wt s_i|_{V^\si}$.
Note that an arbitrary element of $\ol\Phi$ is of the form $\ol\la$ with $\la \in \Phi \backslash \Phi_X$.
Then $\wt s_i(\ol \la) = \ol{\wt s_i(\la)}$ by Proposition \ref{prop:tildeW} \ref{prop:tildesi:commutesigma}.
Combining $\la \in \Phi \backslash \Phi_X$ with Proposition \ref{prop:tildeW} \ref{prop:tildesi:permutePhiX+}, we obtain $\wt s_i(\la) \in \Phi \backslash \Phi_X$ so that $\ol{\wt s_i(\la)} \in \ol\Phi$ as required.
\end{proof}

The converse to Corollary \ref{WbarPhi:stable} is not true: $\ol \Phi$ is stable under $W(\ol\Phi)$ also if $(X,\tau)$ equals \tp[baseline=-3pt,line width=.7pt, scale=75/100]{
\draw (.1,0) --  (.5,0);
\filldraw[fill=white] (0,0) circle (.1);
\filldraw[fill=black] (.5,0) circle (.1);
} (cf.~\cite[Thm.~6.1]{He84}, which is a result for the group $\ol W$). 

\subsection{The restricted Weyl group as a Coxeter group}

It turns out that the condition that $(\wt W,\{ \wt s_i \})_{i \in \wt I}$ is a Coxeter system is also equivalent to $(X,\tau)$ being a generalized Satake diagram.
We give a brief synopsis of the proof of this in the approaches of \cite[Appendix]{Lu03} and \cite{GI14}.
In particular, we highlight the following property as it identifies where in this approach the condition that $(X,\tau)$ is a generalized Satake diagram is necessary.

\begin{lemma} \label{lem:tildeW:calWtau:1}
$\wt W \leq \mc W^\tau$ if and only if $(X,\tau)$ is a generalized Satake diagram.
\end{lemma}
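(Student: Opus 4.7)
The plan is to handle the two directions of the biconditional separately, relying on Proposition \ref{prop:tildeW} and on the fact, noted in the discussion preceding Proposition \ref{prop:barW}, that $\mc W$ is a group.

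The ``only if'' direction is nearly immediate: since $\mc W^\tau \subseteq \mc W \subseteq N_W(W_X)$, the hypothesis $\wt W \le \mc W^\tau$ yields $\wt W \le N_W(W_X)$, which by the equivalence of parts \ref{prop:tildesi:nobadnodes} and \ref{prop:tildesi:normalizeX} of Proposition \ref{prop:tildeW} forces $(X,\tau)$ to be a generalized Satake diagram.

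For the ``if'' direction, assume $(X,\tau)$ is a generalized Satake diagram. By \eqref{tildeW:taufixed} we already have $\wt W \le W^\tau$, so it suffices to show $\wt W \le \mc W$. Because $\mc W$ is a group, it is enough to verify that each generator $\wt s_i = w_X \cdot w_{X[i]}$ with $i \in \wt I$ lies in $\mc W = W^X \cap N_W(W_X)$. Membership in $N_W(W_X)$ is precisely Proposition \ref{prop:tildeW} \ref{prop:tildesi:normalizeX}, so the only remaining point is $\wt s_i \in W^X$. For this I would invoke the standard length-additive factorization of the longest element of the finite Coxeter group $W_{X[i]}$ relative to its parabolic subgroup $W_X$: there is a unique (longest) element $v_i \in W^X \cap W_{X[i]}$ with $w_{X[i]} = w_X \cdot v_i$, and using $w_X^2 = 1$ we obtain $\wt s_i = v_i \in W^X$, as required. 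The main step here is this coset factorization, which is a standard fact about finite Coxeter groups; the substantive combinatorial content has already been packaged into Proposition \ref{prop:tildeW}, so what remains is essentially bookkeeping.
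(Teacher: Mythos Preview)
Your proposal is correct and follows essentially the same architecture as the paper's proof: both directions are reduced to Proposition \ref{prop:tildeW}, with the ``if'' direction additionally requiring the verification that $\wt s_i \in W^X$.

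The only difference lies in how this last point is established. You invoke the standard length-additive factorization $w_{X[i]} = w_X \cdot v_i$ of the longest element of a finite Coxeter group with respect to a parabolic subgroup, which immediately gives $\wt s_i = v_i \in W^X$ and does not require the generalized Satake hypothesis for this particular step. The paper instead computes directly: using the conjugation identities from \eqref{wX:conjugation} it reduces $\ell(s_j \wt s_i) > \ell(\wt s_i)$ to the statement that $\wt s_i(\al_j)$ is a positive root for $j \in X$, and then checks $\wt s_i(\al_j) = \al_{(\oi_X \oi_{X[i]})(j)}$. Your route is arguably cleaner, since it packages the computation into a well-known Coxeter-theoretic fact; the paper's route is more self-contained but takes a small detour through conjugation before arriving at essentially the same root calculation.
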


\begin{proof}
By Proposition \ref{prop:tildeW}, $\wt W \leq N_W(W_X)$ if and only if $(X,\tau)$ is a generalized Satake diagram.
Combining this with \eqref{tildeW:taufixed}, it remains to prove that $\ell(s_j \wt s_i) > \ell(\wt s_i)$ for all $i \in \wt I$ and $j \in X$, assuming that $(X,\tau)$ is a generalized Satake diagram.
For all $j \in X$, $s_j \wt s_i = \wt s_i s_{(\oi_X \oi_{X[i]})(j)}$ with $(\oi_X \oi_{X[i]})(j) \in X$. 
Therefore it suffices to prove that $\ell(\wt s_i w_j) > \ell(\wt s_i)$ for all $j \in X$, which by a basic property of Coxeter groups is equivalent to $\wt s_i$ sending $\{ \al_i \}_{i \in X}$ into $\Phi^+_{X[i]}$.
This follows directly from $\wt s_i(\al_j) = \al_{(\oi_X \oi_{X[i]})(j)}$ for all $j \in X$.
\end{proof}

The inductive proof of \cite[Lem.\ 2]{GI14} can be directly generalized to the current setting, which yields the following statement.

\begin{lemma} \label{lem:tildeW:calWtau:2}
For all $w \in \mc W^\tau$ there exists $(i_1,\ldots,i_{\ell(w)}) \in \wt I^{\ell(w)}$ such that
\eq{
w = \wt s_{i_1} \cdots \wt s_{i_{\ell(w)}}.
}
Furthermore, if $i \in I$ satisfies $\ell(s_iw) < \ell(w)$ then we can choose $(i_1,\ldots,i_{\ell(w)})$ so that $i \in X[i_1]$.
\end{lemma}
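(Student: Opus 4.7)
The plan is to induct on $\ell(w)$, under the standing hypothesis that $(X,\tau)$ is a generalized Satake diagram. The base case $\ell(w)=0$ is trivial: $w=1$ and the empty tuple works, with the ``furthermore'' clause vacuous. For the inductive step, fix $i \in I$ with $\ell(s_i w) < \ell(w)$ (the one specified in the ``furthermore'' clause, or any left descent otherwise). Since $w \in \mc W^\tau \subseteq W^X$, one has $i \notin X$; moreover $\tau$-invariance of $w$ combined with length-preservation of $\tau$ shows that $\tau(i)$ is also a left descent. Pick $i_1 \in I^*$ with $\{i_1,\tau(i_1)\} = \{i,\tau(i)\}$; by construction $i \in X[i_1]$.

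The argument then rests on two claims: (a) $X[i_1]$ is of finite type, so $i_1 \in \wt I$ and $\wt s_{i_1}$ is well-defined; and (b) $\ell(\wt s_{i_1} w) = \ell(w) - \ell(\wt s_{i_1})$. Granting these, $\wt s_{i_1} w$ lies in $\mc W^\tau$ by Lemma \ref{lem:tildeW:calWtau:1} together with the group structure, and applying the induction hypothesis to $\wt s_{i_1} w$ (which is strictly shorter) followed by prepending $i_1$ yields the desired factorization $w = \wt s_{i_1} \wt s_{i_2} \cdots \wt s_{i_r}$, with $\sum_k \ell(\wt s_{i_k}) = \ell(w)$.

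For (a), the plan is to verify the inclusion $X \cup \{i, \tau(i)\} \subseteq \mathrm{Des}_L(w_X w)$ and then invoke the standard Coxeter-theoretic fact that the left descent set of any element generates a finite parabolic. The inclusion of $X$ follows from $s_j w_X = w_X s_{\oi_X(j)}$ together with the parabolic length identity $\ell(w_X w)=\ell(w_X)+\ell(w)$. For $i$, one uses Lemma \ref{lem:Xtau:basics} \ref{lem:Xtau:basics:wX:formula} to write $w_X(\al_i) = \al_i + \sum_{j\in X} v_{ij}\al_j$ with $v_{ij}\in\Z_{\geq 0}$ and applies $w^{-1}$; since $w$ normalizes $W_X$, $w^{-1}$ sends each $\al_j$ ($j \in X$) into $\Pi_X$, while $w^{-1}(\al_i)$ is a negative root not in $-\Phi_X^+$ (because $i$ is a descent outside $X$). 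The outcome $(w_X w)^{-1}(\al_i)$ then has a strictly negative coefficient along some simple root outside $X$ and so must be a negative root.

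For (b), the inclusion $X[i_1] \subseteq \mathrm{Des}_L(w_X w)$ gives a reduced factorization $w_X w = w_{X[i_1]} u$ with $u \in W^{X[i_1]}$. By the generalized Satake condition, $\oi_{X[i_1]}(X)=X$ (Proposition \ref{prop:tildeW}), so $w_{X[i_1]}$ and $w_X$ commute, and a direct computation yields $\wt s_{i_1} w = w_X w_{X[i_1]} w = u$; thus $\ell(\wt s_{i_1} w) = \ell(u) = \ell(w) - \ell(\wt s_{i_1})$. The principal technical obstacle is (a): the root-theoretic argument confirming that $i$ (and hence $\tau(i)$) is a left descent of $w_X w$ is where the interplay between the generalized Satake combinatorics and the Coxeter group structure becomes essential.
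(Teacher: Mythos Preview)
Your proposal is correct and follows precisely the inductive strategy of \cite[Lem.~2]{GI14} that the paper defers to, adapted to allow $X \ne \emptyset$; the paper gives no further detail beyond this reference, so your argument is the intended one fleshed out. Your use of the generalized Satake hypothesis (for the commutation $w_X w_{X[i_1]} = w_{X[i_1]} w_X$ in claim~(b)) and your verification of claim~(a) via the finiteness of left descent sets are exactly the points where the generalization from $X=\emptyset$ requires care, and you handle them correctly.
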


We obtain from Lemmas \ref{lem:tildeW:calWtau:1} and \ref{lem:tildeW:calWtau:2} the following result.

\begin{crl} \label{cor:tildeW:calWtau}
$\wt W \cong \mc W^\tau$ if and only if $(X,\tau)$ is a generalized Satake diagram.
\end{crl}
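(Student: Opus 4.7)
The plan is to deduce the corollary by combining the two preceding lemmas; the real content has already been distributed across Lemmas \ref{lem:tildeW:calWtau:1} and \ref{lem:tildeW:calWtau:2}, so what remains is a short synthesis together with the contrapositive for the converse.

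For the ``if'' direction, I would assume $(X,\tau)$ is a generalized Satake diagram. Lemma \ref{lem:tildeW:calWtau:1} then gives the inclusion $\wt W \leq \mc W^\tau$. To establish the reverse inclusion $\mc W^\tau \leq \wt W$, I would invoke Lemma \ref{lem:tildeW:calWtau:2}: every $w \in \mc W^\tau$ admits a factorization $w = \wt s_{i_1} \cdots \wt s_{i_{\ell(w)}}$ with each $\wt s_{i_k} \in \wt W$, which places $w$ in $\wt W$. Combining both inclusions yields $\wt W = \mc W^\tau$, which is stronger than the claimed isomorphism.

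For the ``only if'' direction, I would argue by contrapositive: if $(X,\tau)$ fails to be a generalized Satake diagram, then by Lemma \ref{lem:tildeW:calWtau:1} we do not even have $\wt W \leq \mc W^\tau$, so \emph{a fortiori} the two groups cannot be isomorphic as subgroups of $W$. (If one interprets $\cong$ as abstract group isomorphism rather than equality inside $W$, one can alternatively note that by Proposition \ref{prop:tildeW} there is an $i \in \wt I$ with $\wt s_i$ of order 3, so $\wt s_i \in \wt W \setminus W^\si \supseteq \wt W \setminus \mc W^\tau$, and count $\tau$-fixed minimal coset representatives on the other side to rule out an abstract isomorphism; but the inclusion-based reading, consistent with the statement of Lemma \ref{lem:tildeW:calWtau:1}, is cleaner.)

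I do not expect any genuine obstacle: the corollary is essentially a bookkeeping statement once Lemmas \ref{lem:tildeW:calWtau:1} and \ref{lem:tildeW:calWtau:2} are in hand. The only subtle point is the interpretation of $\cong$, which should be read as identification of subgroups of $W$ in light of Proposition \ref{prop:barW} (where $\ol W \cong \mc W^\tau$ is realized via the factorization \eqref{Wsigma:cosetX:factorization}); with that reading, the statement reduces to the two-sided inclusion already sketched.
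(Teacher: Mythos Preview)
Your proposal is correct and matches the paper's approach exactly: the paper does not give an explicit proof but simply states that the corollary is obtained from Lemmas \ref{lem:tildeW:calWtau:1} and \ref{lem:tildeW:calWtau:2}, which is precisely the synthesis you carry out. Note that since Lemma \ref{lem:tildeW:calWtau:2} gives $\mc W^\tau \subseteq \wt W$ unconditionally, the ``only if'' direction reduces cleanly to the ``only if'' of Lemma \ref{lem:tildeW:calWtau:1}, so the $\cong$ here should indeed be read as equality of subgroups of $W$ and your inclusion-based argument suffices without the parenthetical alternative.
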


The following desirable property of $\wt W$ can now be obtained.

\begin{thrm} \label{thm:Coxeter}
The group $\wt W$ is a Coxeter group if and only if $(X,\tau)$ is a generalized Satake diagram.
In this case, with $\wt \ell$ denoting the length function for this Coxeter system, we have
\eq{
\qq \wt \ell(ww') = \wt \ell(w)+\wt \ell(w') \qu \Longleftrightarrow \qu \ell(ww')=\ell(w)+\ell(w') \qq \qq \text{for all } w,w' \in \wt W.
}
\end{thrm}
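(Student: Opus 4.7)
The ``only if'' direction of the Coxeter characterization is immediate from the definition of a Coxeter system: all generators of a Coxeter system must by definition be involutions, so Proposition \ref{prop:tildeW} (via the equivalence of (ii) and (v)) forces $(X,\tau)$ to be a generalized Satake diagram. The substance of the theorem thus lies in the ``if'' direction and in the length-function equivalence.

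Assume $(X,\tau)$ is a generalized Satake diagram. By Proposition \ref{prop:tildeW} each $\wt s_i$ is then an involution, and by Corollary \ref{cor:tildeW:calWtau} we may identify $\wt W$ with the subgroup $\mc W^\tau$ of $W$. I would define $\wt\ell(w) := \min\{r : w = \wt s_{i_1}\cdots \wt s_{i_r},\ i_k \in \wt I\}$, which is well-defined by Lemma \ref{lem:tildeW:calWtau:2}. The aim is to verify that $(\wt W,\{\wt s_i\}_{i\in\wt I})$ satisfies the exchange condition with respect to $\wt\ell$; by Matsumoto-Tits this yields the Coxeter presentation. Concretely, given a $\wt\ell$-reduced expression $w = \wt s_{i_1}\cdots \wt s_{i_r}$ and $i \in \wt I$ with $\wt\ell(\wt s_i w)<r$, one unfolds each $\wt s_{i_k}$ as $w_X \cdot w_{X[i_k]}$ in $W$, applies the strong exchange condition in the Coxeter group $W$, and then uses Proposition \ref{prop:tildeW}(iii) (the $\wt W$-stability of $W_X$ under conjugation) to show that the excised simple reflection can be absorbed into the deletion of an entire $\wt s$-factor. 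This is the route already travelled by Geck-Iancu \cite{GI14} in the finite setting, and the arguments transpose once Lemma \ref{lem:tildeW:calWtau:2} is in hand.

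For the length-function equivalence, both implications would follow by concatenating appropriate reduced expressions and invoking Lemma \ref{lem:tildeW:calWtau:2}: if $\wt\ell$ is additive on $ww'$, then concatenating $\wt\ell$-reduced expressions for $w$ and $w'$ yields a $\wt\ell$-reduced expression for $ww'$ whose number of $\wt s$-factors equals both $\wt\ell(w)+\wt\ell(w')$ and (via Lemma \ref{lem:tildeW:calWtau:2} applied to $ww'$) can be matched against $\ell(ww')$, forcing $\ell$-additivity; the converse uses the second, left-descent-controlling part of Lemma \ref{lem:tildeW:calWtau:2}. The main obstacle is the exchange step: arranging that a $W$-level cancellation can be repackaged as a $\wt s$-block cancellation in the Kac-Moody regime. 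The key input is that $W_X$ remains finite (since $X$ is of finite type by definition of a compatible decoration) and $\wt W \leq N_W(W_X)$, which together supply the rigidity needed to adapt the Geck-Iancu inductive argument. The delicate point is ensuring no step implicitly relies on the finiteness of the ambient group $W$, since in general $W$ is infinite.
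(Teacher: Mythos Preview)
Your proposal is correct and follows essentially the same approach as the paper: the paper likewise deduces the ``only if'' direction from Proposition \ref{prop:tildeW} (generators of a Coxeter system are involutions) and, for the ``if'' direction together with the length-compatibility statement, defers to the Geck--Iancu exchange-condition argument \cite[Lem.~4--Prop.~7]{GI14} transported from $W^\tau$ to $\mc W^\tau$ via \cite[Rmk.~8]{GI14}. Your outline supplies more of the mechanics than the paper does (which simply says ``we leave the details to the reader''), and your concern about avoiding finiteness assumptions on the ambient $W$ is well-placed and is exactly the point at which the Kac--Moody generalization needs care.
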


\begin{proof}
We may proceed as in \cite[Lem.\ 4 - Prop.\ 7]{GI14} (also cf.\ \cite{He91} and \cite[Appendix]{Lu03}) by replacing $W^\tau$ by $\mc W^\tau$ according to \cite[Rmk.\ 8]{GI14}.
In this approach, $(\wt W,\{ \wt s_i \}_{i \in \wt I})$ is shown to be a Coxeter system via the characterization that the exchange condition is satisfied, see e.g.\ \cite[Ch.~IV, \S 1.6, Thm.~1]{Bo68}. 
We leave the details to the reader.

To complete the proof, we also need to show that $(\wt W,\{ \wt s_i \}_{i \in \wt I})$ is not a Coxeter system if $(X,\tau)$ is not a generalized Satake diagram, which follows immediately from Proposition \ref{prop:tildeW}.
\end{proof}

Note that Proposition \ref{prop:barW} and Corollary \ref{cor:tildeW:calWtau} combine to yield the following final identification.

\begin{thrm} \label{thm:Weylgroups2}
$\wt W \cong \ol W$ if and only if $(X,\tau)$ is a generalized Satake diagram.
\end{thrm}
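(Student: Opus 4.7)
The plan is to derive Theorem \ref{thm:Weylgroups2} as a one-step composition of the two previous results Proposition \ref{prop:barW} and Corollary \ref{cor:tildeW:calWtau}. Recall that Proposition \ref{prop:barW} provides, unconditionally on $(X,\tau)$, the isomorphism $\ol W \cong \mc W^\tau$, realized by the restriction map $w \mapsto w|_{V^\si}$ with kernel computed in Lemma \ref{lem:wX:kernel} and with domain identified via Lemma \ref{lem:Wtau:cosetX}. Corollary \ref{cor:tildeW:calWtau} meanwhile characterizes when $\wt W$ coincides with $\mc W^\tau$: precisely when $(X,\tau)$ is a generalized Satake diagram. Chaining these two facts gives the theorem.

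For the ``if'' direction, assume $(X,\tau)$ is a generalized Satake diagram. Then $\wt W = \mc W^\tau$ by Corollary \ref{cor:tildeW:calWtau}, and Proposition \ref{prop:tildeW}\ref{prop:tildesi:commutesigma} ensures $\wt W \le W^\si$, so the restriction map $\cdot|_{V^\si}$ of Proposition \ref{prop:barW} is defined on $\wt W$ and produces the desired isomorphism $\wt W \cong \ol W$.

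For the ``only if'' direction, suppose $(X,\tau)$ is not a generalized Satake diagram. Since $\ol W \cong \mc W^\tau$ holds unconditionally, an isomorphism $\wt W \cong \ol W$ would imply $\wt W \cong \mc W^\tau$, contradicting the ``only if'' half of Corollary \ref{cor:tildeW:calWtau}.

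No substantive obstacle arises at this point; the genuine work was already done in establishing Proposition \ref{prop:barW} (which rested on the semidirect decomposition $W^\si \cong W_X \rtimes \ol W$ from Lemma \ref{lem:wX:kernel} together with the identification $(W^X)^\si = \mc W^\tau$ of Lemma \ref{lem:Wtau:cosetX}) and Corollary \ref{cor:tildeW:calWtau} (which in turn depended on the containment result Lemma \ref{lem:tildeW:calWtau:1} and the inductive spanning Lemma \ref{lem:tildeW:calWtau:2} adapted from Geck--Iancu). Once these are in hand, Theorem \ref{thm:Weylgroups2} requires only transitivity of the isomorphism relation.
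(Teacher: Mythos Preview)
Your proposal is correct and follows exactly the same approach as the paper, which states Theorem \ref{thm:Weylgroups2} as an immediate consequence of Proposition \ref{prop:barW} and Corollary \ref{cor:tildeW:calWtau} in a single sentence preceding the theorem, without a separate proof. Your write-up expands that one-line derivation with some useful elaboration (e.g.\ invoking Proposition \ref{prop:tildeW}\ref{prop:tildesi:commutesigma} to justify restricting to $V^\si$), but the logical content is identical.
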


\begin{rmk}\mbox{}
\begin{enumerate}
\item
We leave it to the reader to check that the results in this section extend to non-crystallographic Coxeter groups, if we make the following adjustment:
call a compatible decoration $(X,\tau)$ a generalized Satake diagram if for all $i \in I\backslash X$ such that $\tau(i)=i$ the connected component of $X[i]$ containing $i$ is not a nontrivial odd dihedral group, i.e. not of type ${\sf I}_2(m)$ with $m>1$ odd.
In the crystallographic case only ${\sf I}_2(3) = {\sf A}_2$ can occur, which recovers Definition \ref{def:GSat} \ref{def:GSat:vanilla}.
\item In the case $(X,\tau) = \tp[baseline=-3pt,line width=.7pt, scale=75/100]{
\draw (.1,0) --  (.5,0);
\filldraw[fill=white] (0,0) circle (.1);
\filldraw[fill=black] (.5,0) circle (.1);
}$ the three groups $\ol W$, $W(\ol\Phi)$ and $\wt W$ are all distinct: their orders are 1, 2 and 3, respectively. 
\hfill \rmkend
\end{enumerate}
\end{rmk}


\subsection{Non-reduced and non-crystallographic root systems}

In Appendices \ref{sec:tables:finite} and \ref{sec:tables:affine} we also indicate the restricted root system, which can be non-reduced.
In Table \ref{nonreduced} below we recall the standard notation and Dynkin diagrams for non-reduced crystallographic root systems of finite and affine type.
The notation is of the form $(X,Y)_n$ for $n \ge 1$; here $X_n$ is the type of the underlying reduced crystallographic root system which is obtained by deleting $2\be$ for all roots $\be$ such that $(\be,\be)>0$ and $Y_n$ is the type of the underlying reduced crystallographic obtained by deleting all roots $\be$ such that $(\be,\be)>0$ and $2\be$ is also a root.\\

\begin{table}[h]
\caption{
Dynkin diagrams for non-reduced crystallographic finite and affine root systems. 
If and only if a node in such a Dynkin diagram is marked by x, the root system contains in addition to the corresponding simple root $\be$ also the root $2\be$.
}
\label{nonreduced}
\centering
\begin{tabular}{lllll}
Type & Diagram & Constraints & \multicolumn{2}{l}{Special low-rank case} \\
\hline
\hline 
\\[-10pt]
$({\sf B},{\sf C})_n$ &
\tp{
\draw[thick,dashed] (1.5,0) -- (2.5,0);
\draw[double,->] (2.6,0) --  (3,0);
\filldraw[fill=white] (1.5,0) circle (.1);
\filldraw[fill=white] (2.5,0) circle (.1);
\filldraw[fill=white] (3,0) circle (.1) node[below]{\scriptsize x};
}
& $n \ge 1$
&
\tp{
\filldraw[fill=white] (0,0) circle (.1) node[left]{\scriptsize x};
}
&
$n=1$
\\
\hline
\hline
$(\wh{\sf B},\wh{\sf B}^\vee)_n$ &
\tp{
\draw[thick] (-.6,.3) -- (0,0) -- (-.4,-.3);
\draw[thick,dashed] (0,0) -- (1,0);
\draw[double,->] (1,0) --  (1.4,0);
\filldraw[fill=white] (-.6,.3) circle (.1);
\filldraw[fill=white] (-.4,-.3) circle (.1);
\filldraw[fill=white] (0,0) circle (.1);
\filldraw[fill=white] (1,0) circle (.1);
\filldraw[fill=white] (1.5,0) circle (.1) node[below]{\scriptsize x};
}
& $n \ge 2$ 
& 
\tp{
\draw[double,->] (-.5,0) -- (-.1,0);
\draw[double,<-] (.1,0) --  (.5,0);
\filldraw[fill=white] (-.5,0) circle (.1);
\filldraw[fill=white] (0,0) circle (.1) node[below]{\scriptsize x};
\filldraw[fill=white] (.5,0) circle (.1);
}
&
$n=2$
\\
\hline
\\[-10pt]
$(\wh{\sf C}',\wh{\sf C})_n$
&
\tp{
\draw[double,->] (-.5,0) -- (-.1,0);
\draw[thick,dashed] (0,0) -- (1,0);
\draw[double,->] (1,0) --  (1.4,0);
\filldraw[fill=white] (-.5,0) circle (.1);
\filldraw[fill=white] (0,0) circle (.1);
\filldraw[fill=white] (1,0) circle (.1);
\filldraw[fill=white] (1.5,0) circle (.1) node[below]{\scriptsize x};
}
& $n \ge 1$ 
& 
\tp{
\draw[double,-] (-.5,.055) -- (-.2,.055);
\draw[double,-] (-.5,-.055) -- (-.2,-.055);
\draw[double,->] (-.2,0) -- (-.08,0);
\filldraw[fill=white] (-.5,0) circle (.1);
\filldraw[fill=white] (0,0) circle (.1) node[right]{\scriptsize x};
}
& $n=1$ \\
\hline
\\[-10pt]
$(\wh{\sf C}^\vee,\wh{\sf C})_n$
&
\hspace{-7pt}
\tp{
\draw[double,<-] (-.4,0) -- (0,0);
\draw[thick,dashed] (0,0) -- (1,0);
\draw[double,->] (1,0) --  (1.4,0);
\filldraw[fill=white] (-.5,0) circle (.1) node[below]{\scriptsize x};
\filldraw[fill=white] (0,0) circle (.1);
\filldraw[fill=white] (1,0) circle (.1);
\filldraw[fill=white] (1.5,0) circle (.1) node[below]{\scriptsize x};
}
& $n \ge 1$ 
& 
\tp{
\draw[double] (-.5,0) -- (0,0);
\filldraw[fill=white] (-.5,0) circle (.1) node[left]{\scriptsize x};
\filldraw[fill=white] (0,0) circle (.1) node[right]{\scriptsize x};
}
& $n=1$ \\
\hline
\\[-10pt]
$(\wh{\sf C}^\vee,\wh{\sf C}')_n$
&
\hspace{-7pt}
\tp{
\draw[double,<-] (-.4,0) -- (0,0);
\draw[thick,dashed] (0,0) -- (1,0);
\draw[double,->] (1,0) --  (1.4,0);
\filldraw[fill=white] (-.5,0) circle (.1) node[below]{\scriptsize x};
\filldraw[fill=white] (0,0) circle (.1);
\filldraw[fill=white] (1,0) circle (.1);
\filldraw[fill=white] (1.5,0) circle (.1);
}
& $n \ge 1$ 
& 
\tp{
\draw[double] (-.5,0) -- (0,0);
\filldraw[fill=white] (-.5,0) circle (.1) node[left]{\scriptsize x};
\filldraw[fill=white] (0,0) circle (.1);
}
& $n=1$ \\
\hline
\end{tabular}
\end{table}

Only in one finite-type and one affine-type case a generalized Satake diagram yields a non-crystallographic root system (which is also non-reduced).
We consider these in detail now, also as examples of the computation of the restricted root system via subdiagrams of restricted rank 1.

\begin{exam}\mbox{}
\begin{enumerate}
\item 
Suppose $\mfg$ is of type ${\sf G}_2$.
Label the Dynkin diagram as follows: \hspace{-10pt}
\tp[baseline=-5pt,line width=.7pt, scale=70/100]{
\draw[double,double distance=1.7pt] (-.5,0) -- (-.15,0);
\draw[decoration={markings,mark=at position 1 with {\arrow[scale=1.8]{>}}},postaction={decorate}] (-.5,0) -- (-.05,0);
\filldraw[fill=white] (0,0) circle (.1) node[below]{\scriptsize 2};
\filldraw[fill=white] (-.5,0) circle (.1) node[below]{\scriptsize 1};
}
\hspace{-5pt}. 
Let $\si = s_1$.
From
\eq{
\Phi = \pm \{ \al_1, \al_2, \al_1 + \al_2, \al_1 + 2\al_2, \al_1 + 3 \al_2, 2\al_1 + 3\al_2 \}
}
and 
\eq{
\ol{\al_2} = \ol{\al_1 + \al_2} = \tfrac{1}{2}\al_1 + \al_2, \qu \ol{\al_1 + 2\al_2} = \al_1 + 2\al_2, \qu \ol{\al_1 + 3\al_2} = \ol{2\al_1 + 3\al_2} = \tfrac{3}{2} \al_1 + 3 \al_2
}
it follows that $\ol{\Phi} = \pm\{ 1,2,3\} \ol{\al_2}$, a non-reduced non-crystallographic root system of rank~1.
We denote it\footnote{This case is exceptional in another way: the corresponding subalgebra $\mfk$ is the only reductive one with $(X,\tau,\gamma)$ an enriched generalized Satake diagram of finite type such that $(X,\tau)$ is \emph{not} a Satake diagram, see \cite[Ex. 2 (ii)]{RV20}.} by $({\sf B},{\sf C})_1^+$, since we may obtain it by adjoining $3\be$ for all short roots $\be$ to the non-reduced root system of type $({\sf B},{\sf C})_1$.
\item
By adjoining a node corresponding to minus the highest short root to the Dynkin diagram of the previous example we obtain the affine Lie algebra of type $\wh{\sf G}_2^\vee$.
More precisely, we may label the nodes in the Dynkin diagram as follows:
\hspace{-9pt}
\tp[baseline=-5pt,line width=.7pt, scale=70/100]{
\draw[thick] (.5,0) -- (0,0);
\draw[double,double distance=1.7pt] (-.5,0) -- (-.15,0);
\draw[decoration={markings,mark=at position 1 with {\arrow[scale=1.8]{>}}},postaction={decorate}] (-.5,0) -- (-.05,0);
\filldraw[fill=white] (.5,0) circle (.1) node[below]{\scriptsize 0};
\filldraw[fill=white] (0,0) circle (.1) node[below]{\scriptsize 2};
\filldraw[fill=white] (-.5,0) circle (.1) node[below]{\scriptsize 1};
}
\hspace{-5pt}.

Again, let $\si = s_1$. 
As before, $\ol{\al_2} = \frac{1}{2}\al_1 + \al_2$ and straightforwardly we find $\ol{\al_0} = \al_0$.
We may set $\eps_0=\eps_1=1$ and $\eps_2 =3$. 
We compute the inner products:
\eq{
(\ol{\al_0},\ol{\al_0}) = (\al_0,\al_0) = 2, \qu (\ol{\al_0},\ol{\al_2}) = (\al_0,\al_2) = -1, \qu (\ol{\al_2},\ol{\al_2}) = (\al_2,\tfrac{1}{2}\al_1 + \al_2) = \tfrac{1}{2}.
}
Hence $\ol{\al_2}$ and $\ol{\al_0}$ generate a crystallographic root system $\ol\Phi_{\sf red}$ of type $\wh{\sf C}'_1$. 
It is reduced in the sense that $\Z\be \cap \ol\Phi_{\sf red} = \{\pm \be\}$ for all real roots $\be \in \ol\Phi_{\sf red}$.
Integer multiples of $\ol{\al_2}$ arise in this root system: $\ol{\al_1 + 2\al_2} = 2\ol{\al_2}$ and $\ol{2\al_1 + 3\al_2} = 3\ol{\al_2}$.
One verifies that $\ol \Phi$ is obtained from $\ol\Phi_{\sf red}$ by adjoining $2\al$ and $3\al$ for all real short roots $\al$.
Similar to the previous example, we denote this non-reduced non-crystallographic root system by $(\wh{\sf C}',\wh{\sf C})_1^+$. \hfill \examend
\end{enumerate}
\end{exam}




\appendix

\setlength{\tabcolsep}{4pt}
\renewcommand*{\arraystretch}{1.2}

\section{Classification of generalized Satake diagrams}  \label{sec:tables}

This Appendix provides a classification of $\Aut(A)$-orbits of generalized Satake diagrams whose underlying Dynkin diagram is of finite or affine type\footnote{For completeness we always include the split and compact Satake diagrams $(\emptyset,\id)$ and $(I,\oi_I)$.}.
We refer the reader to Definition \ref{def:GSat} \ref{def:GSat:vanilla} for the definition of generalized Satake diagram and to \eqref{AutA:actionCDec} for the definition of the $\Aut(A)$-action on the set of generalized Satake diagrams.
For $A$ of finite type we recover the classification obtained in \cite[Table I]{He84}. 
We have also indicated the special $\tau$-orbits (see Section \ref{sec:kprime}) and odd nodes (see Section \ref{sec:cdec}) by marking the corresponding orbits in the diagram ${\sf s}$ and ${\sf o}$, respectively.
The absence of odd nodes in a diagram allows the reader easily to recognize Satake diagrams and to recover the classification obtained in \cite[\S 4 and \S 5]{Ar62} for $A$ of finite type (also see \cite[Sec.~3]{NS95}, \cite[Sec.~7]{Le03} and \cite[Ch.~IX]{He12}) and in \cite[Sec.~6]{BBBR95} for $A$ of affine type.

For each diagram we indicate the type of the restriced root system $\ol{\Phi}$ (see Section \ref{sec:restricted}).
We also give the constraints on the discrete parameters determining the type and the description of $(X,\tau)$ in terms of these parameters. 
Finally, we also enumerate the special cases where the nature of the diagram is different from the general case (e.g.\ due to a different number of special $\tau$-orbits or odd nodes or a low rank case), in order of decreasing restricted rank.


\subsection{Notation}

We use the following descriptive notation for the generalized Satake diagrams:
\[
({\rm T})_{\text{description of } X}^{\text{type of } (X,\tau)} .
\]
Here ${\rm T}$ denotes the type of the underlying Dynkin diagram.
The other details are as follows:
\begin{itemize}
\item 
If ${\rm T}$ is of a classical Lie type, the type of $(X,\tau)$ can be plain (indicated by ${\sf pl}$), alternating (${\sf alt}$), reflecting (${\sf rfl}$) or rotational (${\sf rot}$), as explained in Section \ref{sec:GSats}. 
The description of $X$ is a listing of up to two nonnegative integers, except for copies of ${\sf A}_1$ for diagrams of alternating type. 
Each such integer $p$ indicates a connected component of type ${\sf A}_{p-1}$ (for reflecting and rotational types) or ${\sf B}_p$, ${\sf C}_p$ or ${\sf D}_p$ (for plain and alternating types).
\item 
If ${\rm T}$ is of an exceptional Lie type, the type of $(X,\tau)$ amounts to specifying whether $\tau=\id$ (indicated by an absence of a symbol) or $\tau$ is a nontrivial involution (indicated by an apostrophe).
The latter is only relevant if ${\rm T}$ is ${\sf E}_6$, $\wh{\sf E}_6$ or $\wh{\sf E}_7$; in each of these cases there is a unique diagram automorphism of order~2 up to $\Aut(A)$-conjugacy.
The description of $X$ is simply its Lie type. 
When ${\rm T}={\sf G}_2$, it is supplemented by a superscript ${\sf lo}$ or ${\sf sho}$ indicating whether $X$, when it has a single element, corresponds to the long or short simple root of ${\sf G}_2$. 
\end{itemize}

We hope this notation will allow the reader to identify the diagram more easily than the
roman numeral notation commonly used for finite type and the notation in [BBBR95] used for affine type.


\subsection{Low-rank coincidences}

We will allow, in the infinite families of classical Lie type, the rank to go below the usual lower bounds with the following natural interpretations:
\[ 






\end{document}